\newtheorem{Def}{Definition}
\newtheorem{Thm}{Theorem}
\newtheorem{Prop}{Proposition}
\newtheorem{Lem}[Prop]{Lemma}
\newtheorem{Cor}[Prop]{Corollary}
\newtheorem{Ques}{Question}
\theoremstyle{remark}
\newtheorem{Rmk}{Remark}
\newtheorem*{Cmt}{Comment}
\renewcommand{\Re}{\operatorname{Re}}
\newcommand{\M}{\CMcal{M}}
\newcommand{\N}{\CMcal{N}}
\newcommand{\tr}{\operatorname{tr}}
\newcommand{\D}[4]{D_{{#3}, {#4}}({#1}||{#2})}
\newcommand{\Q}[4]{Q_{{#3}, {#4}}({#1}||{#2})}
\newcommand{\renyiD}[3]{D_{{#3}}({#1}||{#2})}
\newcommand{\renyiQ}[3]{Q_{{#3}}({#1}||{#2})}
\newcommand{\sandD}[3]{\widetilde{D}_{{#3}}({#1}||{#2})}
\newcommand{\sandQ}[3]{\widetilde{Q}_{{#3}}({#1}||{#2})}
\title{On $\alpha$-$z$-R\'{e}nyi divergence
in the von Neumann algebra setting}
\author{Shinya KATO\thanks{E-mail:kato.shinya.k6@s.mail.nagoya-u.ac.jp}}
\date{Graduate School of Mathematics, Nagoya University, Furocho, Chikusaku, Nagoya, 464-8602,
Japan}
\begin{document}
\maketitle
\begin{abstract}
We will investigate the $\alpha$-$z$-R\'{e}nyi divergence 
in the general von Neumann algebra setting 
based on Haagerup non-commutative $L^p$-spaces.
In particular, we establish almost all its expected properties 
when $0 < \alpha < 1$ 
and some of them when $\alpha > 1$. 
In an appendix we also give an equality condition for 
generalized H\"{o}lder's inequality 
in Haagerup non-commutative $L^p$-spaces.
\end{abstract}
\section{Introduction}
The QIT (quantum information theory)  can mathematically be described 
in terms of operator algebras on finite-dimensional Hilbert spaces, 
and quantum divergences, such as the relative entropy, 
are functionals on the pairs of states 
(or density  matrices) $\rho$ and $\sigma$ on a fixed operator algebra.
\par
The quantum version of the Kullback--Leibler divergence
was introduced by Umegaki \cite{Umegaki62} 
as the \emph{relative entropy} $D(\rho || \sigma)$.
We also have
the quantum version of $\alpha$-R\'{e}nyi divergence with 
$\alpha \in (0, \infty)\setminus \{1\}$. 
The latter divergence, 
denoted by $\renyiD{\rho}{\sigma}{\alpha}$, was first introduced 
by Petz \cite{Petz86} as one of the \emph{quasi-entropies}. 
This divergence has most of the properties the 
classical R\'{e}nyi entropy does.
However, $\renyiD{\rho}{\sigma}{\alpha}$ satisfies 
the \emph{DPI} (\emph{Data Processing Inequality}) 
$\renyiD{\Phi(\rho)}{\Phi(\sigma)}{\alpha} 
\le \renyiD{\rho}{\sigma}{\alpha}$ with \emph{CPTP} 
(\emph{completely positive trace-preserving}) map 
(or \emph{quantum channel}) $\Phi$ only
if $\alpha$ falls into $(0, 2] \setminus\{1\}$.
Another quantum $\alpha$-R\'{e}nyi divergence 
$\sandD{\rho}{\sigma}{\alpha}$
was introduced by 
M\"{u}ller-Lennert, Dupuis, Szehr, Fehr and Tomamichel \cite{MLDS13} 
and Wilde, Winter and Yang \cite{WWY14} independetly, and 
called the \emph{sandwiched R\'{e}nyi divergence}.
The divergence $\sandD{\rho}{\sigma}{\alpha}$ satisfies the DPI
for all $\alpha \in [1/2, \infty)\setminus\{1\}$
due to Beigi \cite{Beigi13} (who treated only the case of $\alpha > 1$)
and Frank and Lieb \cite{FrankLieb13} independently.
\par
The \emph{$\alpha$-$z$-R\'{e}nyi divergence} $\D{\rho}{\sigma}{\alpha}{z}$ 
was introduced by Audenaert and Datta \cite{AudenaertDatta15}
as a simultaneous generalization of 
the R\'{e}nyi divergence and the sandwiched R\'{e}nyi divergence. 
(We remark that the same quantity had been introduced 
by Jak\v{s}i\'{c}, Ogata, Pautrat and Pillet \cite[Section 4.4.3]{jaksicetal11} 
as a certain entropic functional before the work \cite{AudenaertDatta15}.)
Audenaert and Datta posed the problem of determinating  
all the $(\alpha, z)$ such that the $\alpha$-$z$-R\'{e}nyi divergence 
satisfies the DPI. 
The problem was investigated by 
Carlen, Frank and Lieb \cite{CarlenFrankLieb18} 
and finally settled completely 
by Zhang \cite{Zhang20} using a variational expression.
\par
Beyond the finite dimensional setting, 
many quantum divergences have been generalized even 
to the von Neumann algebra setup.
The relative entropy in the general von Neumann algebra setup 
introduced by Araki \cite{Araki76}, \cite{Araki77}
(using relative modular operators) is such a typical example.
The R\'{e}nyi divergence 
was also generalized to the von Neumann algebra setting in \cite{Hiai18}
as a variant of the \emph{standard $f$-divergence}, 
which is a special case of 
Petz's quasi-entorpies \cite{Petz85}, \cite{Petz86}. 
Then, Berta, Scholz and Tomamichel \cite{BST18} 
and Jen\v{c}ov\'{a} \cite{Jencova18}, \cite{Jencova21}
introduced the sandwiched R\'{e}nyi divergence in the von Neumann algebra setup.
Berta et al.~used 
Araki--Masuda non-commutative $L^p$-spaces 
to introduce the sandwiched R\'{e}nyi divergence
(which they called the Araki--Masuda divergence in \cite{BST18}) 
and Jen\v{c}ov\'{a} used Kosaki non-commutative $L^p$-spaces. 
It was established in \cite{Jencova18}, \cite{Jencova21}
(see also \cite[Theorem 3.11]{Hiai21}) that both the approaches 
to the sandwiched R\'{e}nyi divergence indeed define the same quantity. 
The reader can also find an explanation of those approaches 
in Hiai's monograph \cite[Remark 3.14]{Hiai21} 
(by utilizing Haagerup non-commutative $L^p$-spaces).
Recently, the $\alpha$-$z$-R\'{e}nyi divergence was also generalized to 
the infinite-dimensional \emph{type I} von Neumann algebra setup 
by Mosonyi \cite{Mosonyi23} when $\alpha > 1$ 
and by Zhang and Qi \cite{ZhangQi23} when $0<\alpha<1$.
\par
In our previous paper \cite{KatoUeda23},
we actually proposed a possible definition of 
the $\alpha$-$z$-R\'{e}nyi divergence in the general von Neumann algebra setup  
by using Haagerup non-commutative $L^p$-spaces. 
However, we discussed there only a few facts on 
the $\alpha$-$z$-R\'{e}nyi divergence 
because our purpose there was to illustrate 
how useful our result on non-commutative $L^p$-spaces is. 
Thus, our definition has not yet been justified. 
Hence, we will examine fundamental properties of 
the $\alpha$-$z$-R\'{e}nyi divergence under our definition. 
Actually, we will establish almost all the expected properties 
when $0 < \alpha <1$ (Theorem \ref{Thm1}), 
and will do some of those properties when $\alpha > 1$ (Theorem \ref{Thm2}). 
We also try to clarify next tasks concerning the case of $\alpha > 1$ 
by posing several questions. 
In \ref{Appendix Holder} we will give an equality condition 
for H\"{o}lder's inequality in the framework of Haagerup $L^p$-spaces.
This seems to be a new result.
%%%%%%%%%%%%%%%%%%%%%%%%%%%%
%%%%%%%%%%%%%%%%%%%%%%%%%%%%
\section{Preliminaries}
\subsection{Notations}
In this paper, let $\M$ be a von Neumann algebra 
and $1_{\M}$ be the unit of $\M$. 
When no confusion is possible, we will simply denote by $1$ the unit.
We denote by $\M_*$ the predual of $\M$ and
by $\M_*^+$ its positive cone. 
For $\varphi \in \M_*^+$, 
we denote the support projection of $\varphi$ by $s(\varphi)$. 
For $0< p \le \infty$, 
we denote the Haagerup non-commutative $L^p$-space 
associated with $\M$ by $L^p(\M)$
and its positive cone by $L^p(\M)_+$.
There is a bijective linear isomorphism, called 
the \emph{Haagerup correspondence}, 
between $\M_*$ and $L^1(\M)$ by $\varphi \longmapsto h_{\varphi}$,
and the trace-like functional $\tr \colon L^1(\M) \longrightarrow \mathbb{C}$ 
is defined by $\tr(h_{\varphi}) = \varphi(1)$. 
The details on these materials can be found in \cite{Terp81}, \cite{HiaiLectures21}. 
For $p \in [1, \infty]$, 
we denote the symmetric Kosaki non-commutative $L^p$-space associated with $\M$ 
with respect to a faithful normal state $\varphi_0$
by $L^p(\M, \varphi_0)_{1/2}$ and its norm 
by $\|\cdot \|_{p, \varphi_0, 1/2}$.
Let $p$, $q \in [1, \infty]$ be given with $1/p + 1/q = 1$.
Then, the Haagerup and the Kosaki non-commutative $L^p$-spaces are
isometrically isomorphic to each other by
$L^p(\M) \ni a \longmapsto h_{\varphi_0}^{1/2q} a h_{\varphi_0}^{1/2q}
\in L^p(\M, \varphi_0)_{1/2}$ ($\subset L^1(M)$).
%%%%%%%%%%%%%%%%%%%%
\subsection{Some Lemmas}
We will provide some lemmas that will be necessary later.
\par
The next lemma is (generalized) H\"{o}lder's inequality; 
see \cite[Proposition 9.17]{HiaiLectures21} for its proof.
\begin{Lem}
    Let $p$, $q$, $r \in (0, \infty]$ with
    $1/r = 1/p + 1/q$.
    If $a \in L^p(\M)$ and $b \in L^q(\M)$,
    then $ab \in L^r(\M)$ and
    \begin{equation*}
        \|ab\|_r \le \|a\|_p \|b\|_q.
    \end{equation*}
\end{Lem}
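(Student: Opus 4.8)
The plan is to realize everything inside the $*$-algebra $\widetilde{\N}$ of $\tau$-measurable operators affiliated with the crossed product $\N = \M \rtimes_{\sigma^{\varphi_0}} \R$, with $\tau$ the canonical trace and $\theta = (\theta_s)_{s\in\R}$ the dual action, so that $L^p(\M) = \{x \in \widetilde{\N} : \theta_s(x) = e^{-s/p}x \text{ for all } s\}$ and $\|x\|_p = \tr(|x|^p)^{1/p}$ when $p<\infty$. That $ab$ lies in $L^r(\M)$ is then essentially formal: $\widetilde{\N}$ is closed under multiplication, so (the closure of) $ab$ is $\tau$-measurable, and since $\theta_s$ is a $*$-automorphism of $\widetilde{\N}$ one gets $\theta_s(ab) = \theta_s(a)\theta_s(b) = e^{-s(1/p+1/q)}(ab) = e^{-s/r}(ab)$. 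The substantive point is the norm bound, and the key structural fact I would use is that the generalized $s$-numbers of elements of Haagerup $L^p$-spaces are completely explicit: the $\theta$-equivariance forces the distribution function $\lambda \mapsto \tau(e_{(\lambda,\infty)}(|x|))$ of $x \in L^p(\M)$ to be a pure power, and comparison with the case $p=1$, where $\tau(e_{(\lambda,\infty)}(h_\varphi)) = \varphi(1)/\lambda$, fixes the constant, giving
\[
\mu_t(x) = \|x\|_p\, t^{-1/p} \qquad (t > 0,\ x \in L^p(\M),\ p < \infty),
\]
and $\mu_t(x) \le \|x\|_\infty$ when $p = \infty$.

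Granting this, the inequality is a one-line consequence of the weak log-majorization
\[
\int_0^s \log \mu_t(xy)\, dt \;\le\; \int_0^s \log\!\big(\mu_t(x)\,\mu_t(y)\big)\, dt \qquad (s > 0),
\]
valid for all $\tau$-measurable $x, y$ — the continuous von Neumann algebra version of Horn's inequality $\prod_{j \le k}\sigma_j(XY) \le \prod_{j\le k}\sigma_j(X)\sigma_j(Y)$. Assuming $a, b \ne 0$ (otherwise the claim is trivial) and substituting the formula above, the left-hand side equals $s\log\|ab\|_r - r^{-1}(s\log s - s)$ and the right-hand side equals $s\log(\|a\|_p\|b\|_q) - (1/p+1/q)(s\log s - s)$, i.e.\ the same $s$-linear correction $r^{-1}(s\log s - s)$ is subtracted from each side. (It is precisely the finiteness of $\int_0^s \log t\, dt = s\log s - s$ that makes passing through logarithms work: the arithmetic rearrangement inequality $\int_0^s\mu_t(xy)\,dt \le \int_0^s\mu_t(x)\mu_t(y)\,dt$ is of no direct use here when $r \le 1$, since the power-law $s$-numbers make both integrals diverge.) Cancelling the common term and dividing by $s$ yields $\log\|ab\|_r \le \log(\|a\|_p\|b\|_q)$, i.e.\ the assertion; the cases where $p$ or $q$ equals $\infty$ run identically, with $\mu_t(a) \le \|a\|_\infty$ in place of the exact formula (and, when $p = q = \infty$, just submultiplicativity of the operator norm).

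The main difficulty is thus concentrated in the two inputs rather than in the deduction. The first is the $s$-number formula $\mu_t(x) = \|x\|_p t^{-1/p}$: this is the place where the specific nature of the Haagerup trace $\tr$ has to be used, since with respect to the semifinite trace $\tau$ on $\N$ the elements of $L^p(\M)$ are very far from $p$-integrable and the formula cannot be read off from an ``$\int_0^\infty \mu_t(x)^p\,dt$''-type expression; it is cleanest to obtain it by reduction to $p = 1$, where it records a basic property of the Haagerup correspondence. The second is the weak log-majorization for possibly unbounded $\tau$-measurable operators — classical for bounded operators in a semifinite von Neumann algebra, and extendable to $\widetilde{\N}$ by approximating $x, y$ by bounded operators, but the one genuinely nontrivial ingredient of the proof.
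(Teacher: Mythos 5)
The paper does not actually prove this lemma; it is quoted as known, with a pointer to \cite{HiaiLectures21} (Proposition 9.17), so there is no in-paper proof to compare against line by line. Your argument is correct and is, in substance, the standard proof of generalized H\"{o}lder in the Haagerup setting. The two inputs you isolate are exactly right and both already appear in this paper's toolkit: the formula $\mu_t(x)=t^{-1/p}\|x\|_p$ is \cite[Lemma 4.8]{FackKosaki86}, invoked in the proofs of Lemmas \ref{lp norm order} and \ref{trace equality}, and the continuous Horn/weak log-majorization for $\tau$-measurable operators is the same Fack--Kosaki machinery that underlies Kosaki's ALT inequality \cite{Kosaki92} used in Theorem \ref{Thm1}\ref{monotonicity1}. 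Your order of operations is also the right one: you must first establish $ab\in L^r(\M)$ via $\theta$-equivariance and closedness of $\widetilde{\N}$ under products before you may write $\mu_t(ab)=t^{-1/r}\|ab\|_r$, and you do. The computation then checks out --- the $r^{-1}(s\log s-s)$ terms cancel precisely because $1/r=1/p+1/q$, and your remark that the naive arithmetic submajorization is useless here (both integrals diverge for $r\le 1$) correctly identifies why one must pass through logarithms. One alternative route worth knowing, which the paper itself implicitly uses in \ref{Appendix Holder} when reducing the case $r>1$ to $r=1$: since $r\le\min\{p,q\}$, the exponents $p/r$ and $q/r$ are conjugate, so $\|ab\|_r^r=\||ab|^r\|_1\le\||a|^r|b|^r\|_1\le\||a|^r\|_{p/r}\||b|^r\|_{q/r}$ by the ALT inequality followed by the $r=1$ case of H\"{o}lder; this trades your direct use of log-majorization for ALT (itself proved by log-majorization) plus the conjugate-exponent case, which can be obtained by duality/three-lines arguments. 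Either way the log-majorization is the engine, and your version applies it in the most economical place.
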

The next lemma was essentially given in e.g., \cite{FackKosaki86}.
\begin{Lem} \label{lp norm order}
    Let $0 < p \le \infty$ and $a$, $b \in L^p(\M)_+$. 
    If $a \le b$, 
    then $\|a \|_p \le \|b\|_p$.
\end{Lem}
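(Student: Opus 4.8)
The plan is to reduce the inequality to two standard facts about Haagerup spaces, both phrased through the generalized singular numbers $\mu_t(\cdot)$ of Fack and Kosaki \cite{FackKosaki86}, taken with respect to the canonical semifinite trace $\tau$ on the crossed product $\N = \M \rtimes \R$ underlying Haagerup's construction (inside which every $L^p(\M)$, and more generally every positive $\tau$-measurable operator, lives). First I would dispose of the case $p = \infty$ on its own: there $L^\infty(\M) = \M$ carries the operator norm, and from $0 \le a \le b$ one gets $a \le b \le \|b\| 1$, hence $\|a\| \le \|b\|$ by the elementary order property of $C^*$-norms.

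For $0 < p < \infty$ I would invoke, first, the well-known description of the singular number function of a Haagerup $L^p$-element: for every $h \in L^p(\M)$,
\begin{equation*}
    \mu_t(h) = \|h\|_p\, t^{-1/p} \qquad (t > 0),
\end{equation*}
equivalently, $|h|$ has $\tau$-distribution function $\lambda \mapsto \|h\|_p^{\,p}\, \lambda^{-p}$; this belongs to the basic theory and can be found in \cite{Terp81}, \cite{HiaiLectures21}. In particular $\|h\|_p = \mu_1(h)$. Secondly, I would use the monotonicity of singular numbers: whenever $x$, $y$ are $\tau$-measurable with $0 \le x \le y$, one has $\mu_t(x) \le \mu_t(y)$ for all $t$, which is among the basic properties established in \cite{FackKosaki86}. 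Since the order on $L^p(\M)_+$ is precisely the one inherited from the positive $\tau$-measurable operators, the hypothesis $a \le b$ with $a$, $b \in L^p(\M)_+$ means exactly $0 \le a \le b$ at the level of $\tau$-measurable operators, and therefore $\|a\|_p = \mu_1(a) \le \mu_1(b) = \|b\|_p$, as claimed.

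I do not anticipate a real obstacle here; the one point demanding care is to quote the two cited facts consistently with the conventions fixed in Section~2, in particular to make sure that the singular numbers are computed with respect to the dual trace on the crossed product --- the trace for which Haagerup $L^p$-elements genuinely exhibit the displayed power-type distribution, so that the $L^p$-norm is recovered as the single value $\mu_1$. Should one wish to bypass singular numbers, there is an elementary alternative: the case $p = \infty$ is as above; for $0 < p \le 1$ one combines the operator monotonicity of $t \mapsto t^p$ with the identity $\|a\|_p = \tr(|a|^p)^{1/p}$ and the monotonicity of $\tr$ on $L^1(\M)_+$; and for $1 \le p < \infty$ one uses the duality formula $\|a\|_p = \sup\{\tr(a x) : x \in L^q(\M)_+,\ \|x\|_q \le 1\}$ (with $1/p + 1/q = 1$, a consequence of $L^p$--$L^q$ duality and the generalized Hölder inequality recalled above), together with $\tr(a x) = \tr(x^{1/2} a x^{1/2})$ and, once more, the monotonicity of $\tr$ on $L^1(\M)_+$.
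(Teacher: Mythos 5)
Your main argument is exactly the paper's proof: it combines the monotonicity of generalized $s$-numbers under the order $0 \le a \le b$ (\cite[Lemma 2.5(iii)]{FackKosaki86}) with the identity $\mu_t(h) = t^{-1/p}\|h\|_p$ for Haagerup $L^p$-elements (\cite[Lemma 4.8]{FackKosaki86}), and is correct. The separate treatment of $p=\infty$ and the sketched duality alternative are fine additions but not needed.
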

\begin{proof}
    Assume that $a, b \in L^p(\M)_+$ with $a \le b$.
    By \cite[Lemma 2.5(iii)]{FackKosaki86}, 
    we have $\mu_t(a) \le \mu_t(b)$ for any $t > 0$, 
    where $\mu_t$ is the ($t$th) generalized $s$-number 
    (cf.~\cite[Definition 2.1]{FackKosaki86}).
    In addition, $\mu_t(a)$ and $\|a\|_p = \tr(|a|^p)^{1/p}$ have the relation
    that $\mu_t(a) = t^{-1/p}\|a\|_p$ for any $t > 0$
    (cf.~\cite[Lemma 4.8]{FackKosaki86}). 
    Therefore, $\|a\|_p \le \|b\|_p$ holds.
\end{proof}
The next lemma is often presented without proof. 
\begin{Lem} \label{trace equality}
    Let $\alpha > 0$ and $p$, $q \ge 1$ be such that $1/p + 1/ q = 1$.
    Let $a \in L^{\alpha p} (\M)_+$ and $b \in L^{\alpha q}(\M)_+$ 
    (hence, $a^{1/2}ba^{1/2}$, $b^{1/2}ab^{1/2} \in L^{\alpha}(\M)_+$). 
    Then, we have $\tr((a^{1/2}ba^{1/2})^\alpha) 
    = \tr((b^{1/2}ab^{1/2})^\alpha)$.
\end{Lem}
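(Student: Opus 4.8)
The plan is to realise both quantities as powers of a single element and its adjoint, and then invoke the fact that an element and its adjoint have equal norm in a Haagerup $L^p$-space. Concretely, set $c := b^{1/2}a^{1/2}$. First I would locate $c$ in the right space: from $a \in L^{\alpha p}(\M)_+$ and $b \in L^{\alpha q}(\M)_+$ we get $a^{1/2} \in L^{2\alpha p}(\M)_+$ and $b^{1/2} \in L^{2\alpha q}(\M)_+$, so by (generalized) H\"older's inequality $c \in L^{r}(\M)$ with $1/r = 1/(2\alpha p) + 1/(2\alpha q) = 1/(2\alpha)$, i.e.\ $c \in L^{2\alpha}(\M)$. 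Since $a^{1/2}$ and $b^{1/2}$ are self-adjoint, $c^* = a^{1/2}b^{1/2}$, hence
\[
c^*c = a^{1/2}ba^{1/2}, \qquad cc^* = b^{1/2}ab^{1/2},
\]
and therefore $|c| = (a^{1/2}ba^{1/2})^{1/2}$, $|c^*| = (b^{1/2}ab^{1/2})^{1/2}$, so that $(a^{1/2}ba^{1/2})^\alpha = |c|^{2\alpha}$ and $(b^{1/2}ab^{1/2})^\alpha = |c^*|^{2\alpha}$ lie in $L^1(\M)_+$ by the power-map property of Haagerup $L^p$-spaces. Note that no invertibility of $a$ or $b$ is needed.

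Next I would use that, by the definition of the $L^p$-norm, $\tr(|c|^{2\alpha}) = \|c\|_{2\alpha}^{2\alpha}$ and $\tr(|c^*|^{2\alpha}) = \|c^*\|_{2\alpha}^{2\alpha}$, so that the desired identity is precisely $\|c\|_{2\alpha} = \|c^*\|_{2\alpha}$. This is the only step with genuine content, and I would prove it exactly as in the proof of Lemma \ref{lp norm order}: passing to the canonical semifinite extension $\widetilde{\M}$ with its trace $\tau$, one has $\mu_t(c) = \mu_t(c^*)$ for every $t > 0$ --- either quote this from Fack--Kosaki's generalized $s$-number calculus, or derive it from the polar decomposition $c = v|c|$ together with $|c^*| = v|c|v^*$ --- and then, using $\mu_t(x^s) = \mu_t(x)^s$ and $\|x\|_p^p = \int_0^\infty \mu_t(x)^p\,dt$ (both from \cite{FackKosaki86}),
\[
\|c\|_{2\alpha}^{2\alpha} = \int_0^\infty \mu_t(c)^{2\alpha}\,dt = \int_0^\infty \mu_t(c^*)^{2\alpha}\,dt = \|c^*\|_{2\alpha}^{2\alpha}.
\]

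Combining the two paragraphs yields $\tr((a^{1/2}ba^{1/2})^\alpha) = \|c\|_{2\alpha}^{2\alpha} = \|c^*\|_{2\alpha}^{2\alpha} = \tr((b^{1/2}ab^{1/2})^\alpha)$. The main obstacle is organisational rather than conceptual: one must keep careful track of which exponent each power carries so that H\"older's inequality and the trace functional are legitimately applicable, and one must be slightly cautious because $2\alpha$ may be smaller than $1$, in which case $\|\cdot\|_{2\alpha}$ is only a quasi-norm and $L^{2\alpha}(\M)$ only a quasi-Banach space --- however, the $s$-number identities above remain valid in that range, so the argument goes through unchanged.
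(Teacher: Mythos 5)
Your argument is essentially the paper's own proof: both reduce the claim to the identity $\tr((a^{1/2}ba^{1/2})^\alpha)=\|b^{1/2}a^{1/2}\|_{2\alpha}^{2\alpha}$ and $\tr((b^{1/2}ab^{1/2})^\alpha)=\|a^{1/2}b^{1/2}\|_{2\alpha}^{2\alpha}$, and then deduce $\|c\|_{2\alpha}=\|c^*\|_{2\alpha}$ from the equality of generalized $s$-numbers $\mu_t(c)=\mu_t(c^*)$ of \cite[Lemma 2.5(ii)]{FackKosaki86}. One detail needs correcting: the formula $\|x\|_p^p=\int_0^\infty \mu_t(x)^p\,dt$ is the \emph{tracial} (semifinite) $L^p$-norm formula and does not hold for Haagerup $L^p$-spaces --- there $\mu_t(x)=t^{-1/p}\|x\|_p$ by \cite[Lemma 4.8]{FackKosaki86}, so the integral diverges. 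The repair is immediate (and is what the paper and your own appeal to the proof of Lemma~\ref{lp norm order} actually use): $\|c\|_{2\alpha}=t^{1/2\alpha}\mu_t(c)=t^{1/2\alpha}\mu_t(c^*)=\|c^*\|_{2\alpha}$ for any fixed $t>0$. With that substitution your proof is correct and coincides with the paper's.
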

\begin{proof}
    We note that
    $\tr((a^{1/2}ba^{1/2})^\alpha) = \|b^{1/2}a^{1/2}\|_{2\alpha}^{2\alpha}$
    and $\tr((b^{1/2}ab^{1/2})^\alpha) 
    = \|a^{1/2}b^{1/2}\|_{2\alpha}^{2\alpha}$.
    By \cite[Lemma 2.5(ii)]{FackKosaki86} and \cite[Lemma 4.8]{FackKosaki86}, 
    we have
    \begin{equation*}
        \|a^{1/2}b^{1/2}\|_{2\alpha}^{2\alpha}
        = t^{1/p}\mu_t (a^{1/2}b^{1/2}) = 
        t^{1/p}\mu_t (b^{1/2}a^{1/2}) = 
        \|b^{1/2}a^{1/2}\|_{2\alpha}^{2\alpha}. 
    \end{equation*}
    Thus, the desired equality has been shown.
\end{proof}
The next lemma is due to Fack--Kosaki \cite[Theorem 4.9(iii)]{FackKosaki86}.
\begin{Lem} \label{FackKosaki thm 4.9}
    Let $p \in (0, 1]$ and $a$, $b \in L^p(\M)$.
    Then, we have
    \begin{equation*} 
        \| a + b \|_p^p \le \|a \|_p^p + \|b\|_p^p.
    \end{equation*}
\end{Lem}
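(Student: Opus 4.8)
The plan is to follow Fack--Kosaki's argument for the semifinite case and carry it over to the Haagerup setting. Since $\|x\|_p^p = \tr(|x|^p)$ and $L^p(\M)$ is a vector space (so that $|a|^p$, $|b|^p$, $|a+b|^p \in L^1(\M)_+$), the assertion is equivalent to the trace inequality $\tr(|a+b|^p) \le \tr(|a|^p) + \tr(|b|^p)$, and I would first reduce this to the case $a$, $b \in L^p(\M)_+$. By the $\tau$-measurable version of Thompson's triangle inequality there are $u$, $v \in \M$ with $|a+b| \le u|a|u^* + v|b|v^*$ (here $u$, $v$ can be taken in $\M$ because the polar parts of elements of $L^p(\M)$ lie in $\M$, so that $u|a|u^*$, $v|b|v^* \in L^p(\M)_+$). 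Since $t \mapsto t^p$ is operator monotone on $[0,\infty)$ for $0 < p \le 1$, applying it and then Lemma \ref{lp norm order} (with exponent $1$) gives $\tr(|a+b|^p) \le \tr\bigl((u|a|u^* + v|b|v^*)^p\bigr)$; granting the positive case for $u|a|u^*$ and $v|b|v^*$, and using $(ucu^*)^p = uc^pu^*$ together with $\tr(ucu^*) = \tr(c)$, the right-hand side is $\le \tr(u|a|^pu^*) + \tr(v|b|^pv^*) = \tr(|a|^p) + \tr(|b|^p)$.

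It then remains to prove the positive case: $\tr((a+b)^p) \le \tr(a^p) + \tr(b^p)$ for $a$, $b \in L^p(\M)_+$. For $p = 1$ this is the linearity of $\tr$, so assume $0 < p < 1$, and set $\varphi(\lambda) := \tr((a+\lambda b)^p)$ for $\lambda \in [0,1]$. I would show that $\varphi$ is absolutely continuous with $\varphi'(\lambda) = p\,\tr\bigl((a+\lambda b)^{p-1}b\bigr)$ for a.e.\ $\lambda \in (0,1)$, the standard formula for the derivative of the trace of an operator function. Since $a \ge 0$ we have $a + \lambda b \ge \lambda b$, and $t \mapsto t^{p-1}$ is operator monotone decreasing on $(0,\infty)$ (as $-1 < p-1 < 0$), so $(a+\lambda b)^{p-1} \le (\lambda b)^{p-1} = \lambda^{p-1}b^{p-1}$; hence, by cyclicity of $\tr$ and Lemma \ref{lp norm order},
\[
  \varphi'(\lambda) \;\le\; p\,\lambda^{p-1}\,\tr\bigl(b^{1/2}\,b^{p-1}\,b^{1/2}\bigr) \;=\; p\,\lambda^{p-1}\,\tr(b^p),
\]
and integrating over $[0,1]$ gives $\tr((a+b)^p) - \tr(a^p) = \int_0^1 \varphi'(\lambda)\,d\lambda \le \tr(b^p)$.

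The inequalities above are soft; the real work — and the point where I expect the main difficulty — is the $\tau$-measurable-operator bookkeeping in the positive case: justifying the absolute continuity of $\varphi$ and the derivative formula when $a + \lambda b$ is unbounded, and making sense of $(a+\lambda b)^{p-1}$ and $b^{p-1}$ on the support of $b$ when $\ker b \ne 0$ — most cleanly after a $\|\cdot\|_p$-compatible approximation reducing to bounded, boundedly invertible $a$, $b$. All of this is handled essentially as in \cite{FackKosaki86}.
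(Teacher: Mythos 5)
The paper does not prove this lemma at all: it is quoted verbatim from Fack--Kosaki \cite{FackKosaki86} (their Theorem~4.9(iii) on Haagerup $L^p$-spaces), so there is no internal proof to compare yours with. Your skeleton --- reduction to positive operators via $|a+b|\le u|a|u^*+v|b|v^*$, then a McCarthy-type trace inequality for positives --- is the classical strategy and is sound in finite dimensions; the reduction step itself does survive in the Haagerup setting, since the partial isometries arising there are polar parts of elements of $L^p(\M)$ and hence lie in $\M$.

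The genuine gap is in the positive case, and it is not the ``bookkeeping'' you describe. The proposed regularization, ``a $\|\cdot\|_p$-compatible approximation reducing to bounded, boundedly invertible $a$, $b$,'' is structurally unavailable in Haagerup $L^p$-spaces: a nonzero $a\in L^p(\M)_+$ is \emph{never} a bounded operator (its generalized $s$-numbers satisfy $\mu_t(a)=t^{-1/p}\|a\|_p\to\infty$ as $t\searrow 0$), and its inverse is never $\tau$-measurable with respect to the canonical trace on the crossed product, so any bounded, boundedly invertible approximant leaves $L^p(\M)$ entirely and $\tr$ of its $p$-th power is no longer defined. For the same reason $(a+\lambda b)^{p-1}$ and $b^{p-1}$ are not $\tau$-measurable, so both the derivative formula $\varphi'(\lambda)=p\,\tr\bigl((a+\lambda b)^{p-1}b\bigr)$ and the operator inequality $(a+\lambda b)^{p-1}\le(\lambda b)^{p-1}$ require an interpretation you have not supplied. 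Deferring this to \cite{FackKosaki86} does not close the gap, because the statement being proved \emph{is} their Theorem~4.9(iii): the citation carries the entire weight of the argument, while the specific analytic route you sketch is not the one that transfers to this setting. If you want a self-contained proof, after the reduction to positives it is safer to establish $\tr(f(a+b))\le\tr(f(a))+\tr(f(b))$ for concave $f$ with $f(0)=0$ via the $2\times 2$ block-matrix/pinching argument and the submajorization machinery of \cite{FackKosaki86}, which avoids negative powers and differentiation altogether; otherwise simply cite Theorem~4.9(iii), as the paper does.
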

The next lemma is Kosaki's generalized Powers--St\o mer inequality 
\cite[Appendix]{HiaiNakamura89}.
\begin{Lem} \label{powers stomer}
    If $0 <\theta \le 1$, $\theta \le p \le \infty$ and $a,b \in L^p(\M)_+$,
    then
    \begin{equation*} 
        \| a^\theta - b^\theta \|_{p/\theta} 
        \le \| a - b \|_p^\theta.
    \end{equation*}
\end{Lem}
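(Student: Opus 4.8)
The plan is to reduce everything to the case $a\ge b$ (the ``comparable case''), and within that case to isolate the single genuinely hard input, namely Kosaki's generalized Powers--St\o mer inequality \cite[Appendix]{HiaiNakamura89} (the von Neumann algebra form of the Birman--Koplienko--Solomyak inequality). Two situations are easy and I would dispose of them first. The case $\theta=1$ is trivial. For $p=\infty$ and $a,b\in\M_+$, writing $\delta:=\|a-b\|_\infty$, the L\"owner--Heinz inequality ($t\mapsto t^\theta$ is operator monotone for $0<\theta\le1$) gives $a\le b+\delta\,1$, hence $a^\theta\le(b+\delta\,1)^\theta\le b^\theta+\delta^\theta\,1$, the last step by functional calculus since $b$ and $\delta\,1$ commute; interchanging $a$ and $b$ gives $\|a^\theta-b^\theta\|_\infty\le\|a-b\|_\infty^\theta$. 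So assume henceforth $0<\theta<1$ and $\theta\le p<\infty$, put $q:=p/\theta$ so that $q\ge1$ and $\theta q=p$, and recall that every $x\in L^r(\M)$ has $\mu_t(x)=t^{-1/r}\|x\|_r$, that $x\in L^p(\M)_+$ implies $x^\theta\in L^q(\M)_+$ with $\|x^\theta\|_q=\|x\|_p^\theta$, and that the generalized $s$-numbers are monotone (as in the proof of Lemma \ref{lp norm order}) and do not increase under compression.

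I would first reduce to the comparable case. Put $c:=a+(a-b)_-=b+(a-b)_+\in L^p(\M)_+$, so $c\ge a$ and $c\ge b$; by L\"owner--Heinz, $V:=c^\theta-b^\theta$ and $U:=c^\theta-a^\theta$ lie in $L^q(\M)_+$ and $a^\theta-b^\theta=V-U$. Since the positive and negative parts of $V-U$ have orthogonal supports, $\|a^\theta-b^\theta\|_q^q=\tr(|V-U|^q)=\|(V-U)_+\|_q^q+\|(V-U)_-\|_q^q$. Writing $e$ for the support projection of $(V-U)_+$ one has $(V-U)_+=e(V-U)e=eVe-eUe\le eVe$, so $\|(V-U)_+\|_q=\mu_1((V-U)_+)\le\mu_1(eVe)\le\mu_1(V)=\|V\|_q$, and symmetrically $\|(V-U)_-\|_q\le\|U\|_q$; hence $\|a^\theta-b^\theta\|_q^q\le\|V\|_q^q+\|U\|_q^q$. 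Applying the comparable case to the pairs $(c,b)$ and $(c,a)$ gives $\|V\|_q\le\|c-b\|_p^\theta=\|(a-b)_+\|_p^\theta$ and $\|U\|_q\le\|(a-b)_-\|_p^\theta$, so
\[
\|a^\theta-b^\theta\|_q^q\le\|(a-b)_+\|_p^{\theta q}+\|(a-b)_-\|_p^{\theta q}=\|(a-b)_+\|_p^p+\|(a-b)_-\|_p^p=\|a-b\|_p^p,
\]
again by orthogonality of supports; taking $q$-th roots gives the lemma. For the comparable case itself: when $p=\theta$ it reads $\tr(a^\theta-b^\theta)\le\tr((a-b)^\theta)$ and is immediate from Lemma \ref{FackKosaki thm 4.9} applied to the positive elements $b$ and $a-b$; when $\theta<p<\infty$, both $a^\theta-b^\theta$ and $(a-b)^\theta$ are positive elements of $L^q(\M)$ with $\|(a-b)^\theta\|_q=\|a-b\|_p^\theta$, so their $s$-number functions are $t\mapsto t^{-1/q}\|a^\theta-b^\theta\|_q$ and $t\mapsto t^{-1/q}\|a-b\|_p^\theta$, and the weak submajorization $\int_0^T\mu_t(a^\theta-b^\theta)\,dt\le\int_0^T\mu_t((a-b)^\theta)\,dt$ ($T>0$) collapses exactly to $\|a^\theta-b^\theta\|_q\le\|a-b\|_p^\theta$.

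Thus the whole argument rests on that weak submajorization --- for comparable, or indeed arbitrary, positive $\tau$-measurable operators --- and establishing it is the step I expect to be the main obstacle. It is not an operator inequality (for $2\times2$ matrices $a^\theta-b^\theta\le(a-b)^\theta$ can fail even when $a\ge b$, and so can $\mu_t(a^\theta-b^\theta)\le\mu_t((a-b)^\theta)$ at individual $t$), and it resists a naive term-by-term treatment via $a^\theta-b^\theta=\tfrac{\sin\pi\theta}{\pi}\int_0^\infty s^\theta(a+s)^{-1}(a-b)(b+s)^{-1}\,ds$, in which $a-b$ enters linearly, so that crude estimates of the integrand only see the $L^p$-scale and the resulting $s$-integral diverges; cancellation must be used. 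A self-contained proof would instead approximate $a$ and $b$ in measure by spectral truncations of finite $\tau$-trace, reducing the submajorization (which involves only $s$-numbers) to the classical Birman--Koplienko--Solomyak inequality for trace-class operators, itself provable by the standard integral-representation-with-cancellation argument for nonnegative operator monotone functions. As this is precisely Kosaki's result \cite[Appendix]{HiaiNakamura89}, in practice one simply quotes it.
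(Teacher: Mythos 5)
The paper offers no proof of this lemma at all: it is stated verbatim as Kosaki's generalized Powers--St\o mer inequality and simply attributed to \cite[Appendix]{HiaiNakamura89}. Your reductions (disposing of $\theta=1$ and $p=\infty$, passing to the comparable case via $c=b+(a-b)_+$, and collapsing the weak submajorization of generalized $s$-numbers to the $L^{p/\theta}$-norm inequality) are all correct, but since the cited result of Kosaki already covers arbitrary $a,b\in L^p(\M)_+$ in exactly this generality, your argument ultimately rests on the same single citation the paper invokes, so the two approaches coincide.
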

The continuity of the map $L^1(\M)_+ \ni h \longmapsto h^{1/p} \in L^p(\M)_+$ 
is given when $p \ge 1$ in \cite[Theorem 4.2]{Kosaki84}.
\begin{Lem} \label{norm continuity}
    Let $p \in (0, \infty)$ and $\varphi$, $\varphi_n \in \M_*^+$.
    If $\lim_{n \to \infty}\| h_{\varphi_n} - h_{\varphi} \|_1 = 0$, 
    then $\lim_{n \to \infty} 
    \| h_{\varphi_n}^{1/p} - h_{\varphi}^{1/p} \|_p = 0$.
    Namely, $L^1(\M)_+ \ni h \longmapsto h^{1/p} \in L^p(\M)_+$ 
    is a continuous map in the norm topology.
\end{Lem}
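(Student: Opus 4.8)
The plan is to reduce everything to the case $p\ge 1$, which is classical — it is \cite[Theorem 4.2]{Kosaki84}, and it also follows at once from Lemma \ref{powers stomer} applied in $L^1(\M)$ with $\theta=1/p\le 1$, giving $\|h_{\varphi_n}^{1/p}-h_\varphi^{1/p}\|_p\le\|h_{\varphi_n}-h_\varphi\|_1^{1/p}\to 0$. For general $p\in(0,\infty)$ I would fix an integer $N=N(p)\ge 1$ with $Np\ge 1$ (so $N=1$ when $p\ge 1$) and exploit the factorization $h^{1/p}=\big(h^{1/(Np)}\big)^N$ for $h\in L^1(\M)_+$, valid since all powers of a fixed positive element commute. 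Since $1/(Np)\le 1$, Lemma \ref{powers stomer} (applied in $L^1(\M)$ with $\theta=1/(Np)$) yields
\[
  \big\|h_{\varphi_n}^{1/(Np)}-h_\varphi^{1/(Np)}\big\|_{Np}\;\le\;\|h_{\varphi_n}-h_\varphi\|_1^{1/(Np)}\;\xrightarrow[n\to\infty]{}\;0,
\]
so it remains only to see that the $N$-th power map is continuous from $L^{Np}(\M)_+$ into $L^p(\M)_+$.

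Write $g_n:=h_{\varphi_n}^{1/(Np)}$ and $g:=h_\varphi^{1/(Np)}$, so that $\|g_n-g\|_{Np}\to 0$ and (by the triangle inequality in $L^{Np}(\M)$, which is a genuine norm since $Np\ge 1$) $C:=\sup_n\|g_n\|_{Np}<\infty$. I would then use the telescoping identity
\[
  g_n^N-g^N\;=\;\sum_{k=0}^{N-1}g_n^{\,N-1-k}\,(g_n-g)\,g^{\,k}
\]
(with the convention that a zeroth power is the unit of $\M$, i.e.\ that factor is simply omitted) together with the iterated generalized H\"{o}lder inequality stated above and the elementary identity $\|a^m\|_{Np/m}=\|a\|_{Np}^m$ for $a\in L^{Np}(\M)_+$, $m\ge 1$ (and $\|a^0\|_\infty\le 1$), to get
\[
  \big\|h_{\varphi_n}^{1/p}-h_\varphi^{1/p}\big\|_p\;=\;\|g_n^N-g^N\|_p\;\le\;\Big(\sum_{k=0}^{N-1}C^{\,N-1-k}\,\|g\|_{Np}^{\,k}\Big)\,\|g_n-g\|_{Np}\;\xrightarrow[n\to\infty]{}\;0,
\]
which is exactly the assertion.

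The main point to get right is the step ``$g_n\to g$ in $L^{Np}$ implies $g_n^N\to g^N$ in $L^p$'': since $g_n$ and $g$ need not commute, this is not the scalar estimate $|x^N-y^N|\le N\max(x,y)^{N-1}|x-y|$ but genuinely requires the telescoping sum combined with H\"{o}lder's inequality and the uniform bound $C$. Everything else — the factorization $h^{1/p}=(h^{1/(Np)})^N$, the choice of $N$, the bookkeeping of H\"{o}lder exponents (note $\tfrac{N-1-k}{Np}+\tfrac{1}{Np}+\tfrac{k}{Np}=\tfrac1p$), and the trivial case $N=1$ where the first displayed inequality is already the conclusion — is routine.
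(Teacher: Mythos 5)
Your overall strategy is the same as the paper's: handle $p\ge 1$ directly via the Powers--St\o mer inequality (Lemma \ref{powers stomer}), and for small $p$ choose an integer $N$ with $Np\ge 1$, write $h^{1/p}=(h^{1/(Np)})^N$, telescope the difference of $N$-th powers, and control each summand with H\"older's inequality together with Powers--St\o mer; the paper does exactly this (with $k$ in place of your $N$). The H\"older exponent bookkeeping and the uniform bound $C=\sup_n\|g_n\|_{Np}<\infty$ are fine.

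There is, however, one step that is not valid as written. Your final display
\[
\|g_n^N-g^N\|_p\le\Big(\sum_{k=0}^{N-1}C^{\,N-1-k}\,\|g\|_{Np}^{\,k}\Big)\,\|g_n-g\|_{Np}
\]
uses the triangle inequality for $\|\cdot\|_p$ applied to the $N$ terms of the telescoping sum. But in the only case where $N>1$ is actually needed, namely $0<p<1$, $\|\cdot\|_p$ is merely a quasi-norm and the triangle inequality fails, so this inequality is false in general. The correct substitute is the Fack--Kosaki inequality $\|a+b\|_p^p\le\|a\|_p^p+\|b\|_p^p$ (Lemma \ref{FackKosaki thm 4.9}), which is precisely what the paper invokes: one estimates $\|g_n^N-g^N\|_p^p$ by the sum of the $p$-th powers $\|g_n^{\,N-1-k}(g_n-g)g^{\,k}\|_p^p$, each of which tends to $0$ by H\"older and Powers--St\o mer. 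This repairs your argument at the cost of only replacing the linear bound by a $p$-th-power bound (the conclusion $\|g_n^N-g^N\|_p\to 0$ is unaffected), but you should not assert the displayed inequality itself for $p<1$.
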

\begin{proof}
    When $p \in [1, \infty)$, this is clear 
    by Lemma \ref{powers stomer}
    with $\theta = 1/p$, $p=1$. 
    \par
    We then consider the case of $0 < p < 1$. 
    We can choose a natural number $k \in \mathbb{N}$ with
    $1/kp \le 1$.
    By Lemma \ref{FackKosaki thm 4.9},
    we have
    \begin{align*}
        \|h_{\varphi_n}^{1/p} - h_{\varphi}^{1/p}\|_p^p
        &= \|(h_{\varphi_n}^{1/kp} - h_{\varphi}^{1/kp})
        h_{\varphi_n}^{(k-1)/kp}
        + h_{\varphi}^{1/kp} 
        (h_{\varphi_n}^{1/kp} - h_{\varphi}^{1/kp})
        h_{\varphi_n}^{(k-2)/kp} 
        + \cdots \\
        &\qquad\qquad+ h_{\varphi}^{(k-2)/kp}
        (h_{\varphi_n}^{1/kp} - h_{\varphi}^{1/kp})
        h_{\varphi_n}^{1/kp}
        + h_{\varphi}^{(k-1)/kp}
        (h_{\varphi_n}^{1/kp} - h_{\varphi}^{1/kp}) \|_p^p \\
        &\le \| (h_{\varphi_n}^{1/kp} - h_{\varphi}^{1/kp})
        h_{\varphi_n}^{(k-1)/kp} \|_p^p
        + \| h_{\varphi}^{1/kp} 
        (h_{\varphi_n}^{1/kp} - h_{\varphi}^{1/kp})
        h_{\varphi_n}^{(k-2)/kp} \|_p^p
        + \cdots \\
        &\qquad\qquad+ \| h_{\varphi}^{(k-2)/kp}
        (h_{\varphi_n}^{1/kp} - h_{\varphi}^{1/kp})
        h_{\varphi_n}^{1/kp} \|_p^p
        + \| h_{\varphi}^{(k-1)/kp}
        (h_{\varphi_n}^{1/kp} - h_{\varphi}^{1/kp}) \|_p^p. 
    \end{align*}
    (n.b., $h^\alpha h^\beta = h^{\alpha + \beta}$ naturally holds
    for any $\alpha$, $\beta \ge 0$; see \cite[Proposition B.2]{Hiai21}.)
    Applying
    H\"{o}lder's inequality and Lemma \ref{powers stomer}
    to each term of the final line of the above inequality,
    we obtain that
    \begin{align*}
        &\| (h_{\varphi_n}^{1/kp} - h_{\varphi}^{1/kp}) 
        h_{\varphi_n}^{(k-1)/kp} \|_p^p
        \le \| h_{\varphi_n} - h_{\varphi}\|_1^{1/k}
        \| h_{\varphi_n}^{(k-1)/kp} \|_{kp/(k-1)}^p, \\
        &\| h_{\varphi}^{1/kp} 
        (h_{\varphi_n}^{1/kp} - h_{\varphi}^{1/kp})
        h_{\varphi_n}^{(k-2)/kp} \|_p^p
        \le  \| h_{\varphi}^{1/kp} \|_{kp}^p
        \| h_{\varphi_n} - h_{\varphi} \|_1^{1/k}
        \| h_{\varphi_n}^{(k-2)/kp} \|_{kp/(k-2)}^p, \\
        &\qquad\vdots \\
        &\| h_{\varphi}^{(k-2)/kp}
        (h_{\varphi_n}^{1/kp} - h_{\varphi}^{1/kp})
        h_{\varphi_n}^{1/kp} \|_p^p 
        \le \| h_{\varphi}^{(k-2)/kp} \|_{kp/(k-2)}^p
        \| h_{\varphi_n} - h_{\varphi} \|_1^{1/k}
        \| h_{\varphi_n}^{1/kp} \|_{kp}^p, \\
        &\| h_{\varphi}^{(k-1)/kp}
        (h_{\varphi_n}^{1/kp} - h_{\varphi}^{1/kp}) \|_p^p
        \le \| h_{\varphi}^{(k-1)/kp} \|_{kp/(k-1)}^p
        \| h_{\varphi_n} - h_{\varphi} \|_1^{1/k}.
    \end{align*}
    Therefore, 
    if $\| h_{\varphi_n} - h_{\varphi} \|_1 \to 0$,
    then every term converges to $0$.
    Thus, $\|h_{\varphi_n}^{1/p} - h_{\varphi}^{1/p} \|_p^p \to 0$
    as $n \to \infty$. 
\end{proof}
%%%%%%%%%%%%%%%%%
%%%%%%%%%%%%%%%%%
%%%%%%%%%%%%%%%%%
\section{Definition of the \texorpdfstring{$\alpha$-$z$-R\'{e}nyi}{alpha-z-R\'{e}nyi} divergence and Relation to other divergences}
The $\alpha$-$z$-R\'{e}nyi divergence for normal positive linear functionals 
on (general) von Neumann algebras is defined in \cite[\S 4]{KatoUeda23}.
\begin{Def}
    For $\psi$, $\varphi \in \M^+_*$ 
    and $\alpha$, $z > 0$ with $\alpha \ne 1$, let 
    \begin{align*}
        \Q{\psi}{\varphi}{\alpha}{z} =
        \begin{cases}
            \tr \left( (h_{\varphi}^{(1-\alpha)/2z} h_{\psi}^{\alpha/z} h_{\varphi}^{(1-\alpha)/2z})^z \right) &(0<\alpha < 1), \\
            \| x \|_z^z &(\text{$\alpha > 1$ \rm{and \eqref{alpha z identity} holds with} $x \in s(\varphi) L^z(\M) s(\varphi)$}), \\
            \infty &(\mathrm{otherwise}),
        \end{cases}
    \end{align*}
    where
    \begin{equation} \label{alpha z identity}\tag{$\spadesuit$}
        h_{\psi}^{\alpha/z} = h_{\varphi}^{(\alpha - 1)/2z} x h_{\varphi}^{(\alpha - 1)/2z}.
    \end{equation}
    When $\psi \ne 0$, 
    the \emph{$\alpha$-$z$-R\'{e}nyi divergence} 
    $\D{\psi}{\varphi}{\alpha}{z}$ is defined by
    \begin{equation*}
        \D{\psi}{\varphi}{\alpha}{z} 
        = \frac{1}{\alpha - 1} \log \frac{\Q{\psi}{\varphi}{\alpha}{z}}{\psi(1)}.
    \end{equation*}
\end{Def}
%%%%%%%
\begin{Rmk}
    The $\alpha$-$z$-R\'{e}nyi divergence is well defined as discussed in \cite[\S 4]{KatoUeda23}. 
\end{Rmk}
\begin{Rmk} \label{basical remark of alpha z renyi}
    \begin{enumerate}[label=(\roman*)]
        \item \label{basical remark of alpha z renyi 1}
        We include $\psi = 0$ in the definition of 
        $\Q{\psi}{\varphi}{\alpha}{z}$ 
        (only defined for $\psi \ne 0$ in \cite[\S 4]{KatoUeda23})
        to give properties of 
        $\Q{\psi}{\varphi}{\alpha}{z}$ later.
        It is immediately seen that $\Q{0}{\varphi}{\alpha}{z} = 0$
        for any $\varphi$, $\alpha$, $z$. 
        \item \label{basical remark of alpha z renyi 2}
        We use conventions that $\log 0 = -\infty$, $\log \infty = \infty$ 
        and $0\cdot \infty = 0$.
        \item The non-normalized $\alpha$-$z$-R\'{e}nyi divergence
        \begin{equation*}
            \widehat{D}_{\alpha, z}(\psi||\varphi)
            := \frac{1}{\alpha - 1}\log \Q{\psi}{\varphi}{\alpha}{z}
        \end{equation*}
        is often used instead of $\D{\psi}{\varphi}{\alpha}{z}$. 
        One of the reasons to use the non-normalized R\'{e}nyi divergence is 
        that it is more natural in the study of the strong converse exponent,
        see \cite[Remark 2.1]{HiaiMosonyi23}.
        We need to change
        the properties \ref{scaling property1}, \ref{order relation1}, 
        \ref{jointly convexity1} in Theorems \ref{Thm1} and \ref{Thm2}
        sightly for the non-normalized $\alpha$-$z$-R\'{e}nyi diergence. 
        \item Identity \eqref{alpha z identity} and 
        $x \in s(\varphi) L^z(\M)s(\varphi)$ imply $x \ge 0$, that is, 
        $x$ must be positive.
    \end{enumerate}
\end{Rmk}
\begin{Lem} \label{identity lemma}
    Let $\psi$, $\varphi \in \M_*^+$ and $\alpha > 1$.
    There exists $x \in s(\varphi) L^z(\M)s(\varphi)$
    such that identity \eqref{alpha z identity} holds
    if and only if there exists $y \in L^{2z}(\M)s(\varphi)$
    such that the following identity holds:
    \begin{equation} \label{alpha z identity prime} \tag{$\spadesuit$'}
        h_{\psi}^{\alpha/2z} = y h_{\varphi}^{(\alpha - 1)/2z}.
    \end{equation}
    Moreover, $\|x\|_z^z = \|y\|_{2z}^{2z}$ holds in this case.
\end{Lem}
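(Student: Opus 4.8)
The plan is to read \eqref{alpha z identity} and \eqref{alpha z identity prime} as two factorizations of the same object. Writing $A:=h_\varphi^{(\alpha-1)/2z}\in L^{2z/(\alpha-1)}(\M)_+$, identity \eqref{alpha z identity} says that $h_\psi^{\alpha/z}=A^{*}xA$ for some $x\ge0$, while \eqref{alpha z identity prime} says that its positive square root factors as $h_\psi^{\alpha/2z}=yA$. Passing between the two is then just a matter of taking a square root and a polar decomposition inside a Haagerup $L^p$-space, keeping track of exponents by (generalized) H\"{o}lder's inequality and using the power rules $h^{s}h^{t}=h^{s+t}$ and $(h^{s})^{1/2}=h^{s/2}$ for $s,t\ge0$ (cf.\ \cite[Proposition B.2]{Hiai21}).

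For the direction \eqref{alpha z identity prime}\,$\Rightarrow$\,\eqref{alpha z identity} I will simply take $x:=y^{*}y$. Then $x\ge0$; H\"{o}lder's inequality gives $x\in L^{z}(\M)$; and $y=ys(\varphi)$ yields $x=s(\varphi)xs(\varphi)$, so that $x\in s(\varphi)L^{z}(\M)s(\varphi)$. The identity $A^{*}xA=(yA)^{*}(yA)=(h_\psi^{\alpha/2z})^{*}h_\psi^{\alpha/2z}=h_\psi^{\alpha/z}$ then establishes \eqref{alpha z identity}, and $\|x\|_{z}^{z}=\|\,|y|^{2}\,\|_{z}^{z}=\|y\|_{2z}^{2z}$.

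For the converse I will start from $x\in s(\varphi)L^{z}(\M)s(\varphi)$ with $x\ge0$ (Remark \ref{basical remark of alpha z renyi}) and put $b:=x^{1/2}A\in L^{2z/\alpha}(\M)$. Since $b^{*}b=A^{*}xA=h_\psi^{\alpha/z}$, we get $|b|=h_\psi^{\alpha/2z}$; taking the polar decomposition $b=v|b|$ with $v\in\M$ a partial isometry (so $v^{*}v=s(\psi)$), I will set $y:=v^{*}x^{1/2}\in L^{2z}(\M)$. From $s(x^{1/2})=s(x)\le s(\varphi)$ we have $x^{1/2}s(\varphi)=x^{1/2}$, hence $y\in L^{2z}(\M)s(\varphi)$, and $yA=v^{*}b=v^{*}v\,|b|=|b|=h_\psi^{\alpha/2z}$, which is \eqref{alpha z identity prime}. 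For the norm identity, $b=vv^{*}b$ gives $(1-vv^{*})x^{1/2}A=0$; since $A$ has support projection $s(\varphi)$ while $(1-vv^{*})x^{1/2}=(1-vv^{*})x^{1/2}s(\varphi)$, a cancellation argument forces $(1-vv^{*})x^{1/2}=0$, so $y^{*}y=x^{1/2}vv^{*}x^{1/2}=x$ and $\|y\|_{2z}^{2z}=\|y^{*}y\|_{z}^{z}=\|x\|_{z}^{z}$.

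The step I expect to be the main obstacle is exactly this cancellation, which involves products of possibly unbounded positive elements of Haagerup $L^p$-spaces. I will justify that $cA=0$ together with $cs(\varphi)=c$ forces $c=0$ by realizing the $L^p$-elements as $\tau$-measurable operators, multiplying $cA=0$ on the right by $A$ repeatedly, passing to the Borel functional calculus to obtain $c\,1_{(\varepsilon,\infty)}(A)=0$ for every $\varepsilon>0$, and then letting $\varepsilon\downarrow0$ to get $cs(\varphi)=0$. One can also bypass this: the first part already exhibits $y^{*}y$ as an element of $s(\varphi)L^{z}(\M)s(\varphi)$ satisfying \eqref{alpha z identity}, so the uniqueness of such an element implicit in the well-definedness of $\Q{\psi}{\varphi}{\alpha}{z}$ from \cite[\S 4]{KatoUeda23} gives $x=y^{*}y$ directly. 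The remaining ingredients --- the H\"{o}lder bookkeeping, the $L^p$ polar decomposition, and the power rules --- are routine.
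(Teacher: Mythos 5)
Your proof is correct and follows essentially the same route as the paper's: $x:=y^*y$ for one implication, and a polar decomposition of $x^{1/2}h_{\varphi}^{(\alpha-1)/2z}$ (your $v$ is the paper's $u$) to produce $y$ for the other. The only cosmetic difference is that you establish $y^*y=x$ exactly via the support/cancellation argument, whereas the paper gets the norm identity from the trace computation $\tr((x^{1/2}uu^*x^{1/2})^z)=\tr(x^z)$, which implicitly rests on the same fact $uu^*\ge s(x)$; both are fine.
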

\begin{proof}
    Assume that there exists $y \in L^{2z}(\M)s(\varphi)$ such that 
    identity \eqref{alpha z identity prime} holds. 
    Taking the adjoint of \eqref{alpha z identity prime},
    we have $h_{\psi}^{\alpha/2z} = h_{\varphi}^{(\alpha - 1)/2z} y^*$.
    Multiplying both sides of this equality by 
    \eqref{alpha z identity prime}, 
    we obtain $h_{\psi}^{\alpha} 
    = h_{\varphi}^{(\alpha - 1)/2} y^* y h_{\varphi}^{(\alpha - 1)/2}$
    and  $x = y^* y \in s(\varphi)L^z(\M) s(\varphi)$ 
    since $y \in L^{2z}(\M)s(\varphi)$.
    Therefore, the \emph{if} part is shown.
    \par 
    We next consider the \emph{only if} part.
    Assume that there exists $x \in s(\varphi)L^z(\M)s(\varphi)$ 
    such that identity \eqref{alpha z identity} holds.
    Here, remark that we may and do assume $x \in (s(\varphi)L^z(\M)s(\varphi))_+$. 
    Taking the polar decompositon of $x^{1/2}h_{\varphi}^{(\alpha - 1)/2z}$, 
    we have
    \begin{equation} \label{polar decomp of the identity}
        x^{1/2}h_{\varphi}^{(\alpha - 1)/2z} 
        = u |x^{1/2}h_{\varphi}^{(\alpha - 1)/2z}|,
    \end{equation}
    where $u \in \M$ is a partial isometry with 
    $u^* u = s(|x^{1/2}h_{\varphi}^{(\alpha - 1)/2z}|)$. 
    Then, we obtain
    $h_{\psi}^{\alpha/2z} = |x^{1/2}h_{\varphi}^{(\alpha - 1)/2z}|
    = u^* x^{1/2}h_{\varphi}^{(\alpha - 1)/2z}$.
    As a result, we can take $y = u^* x^{1/2} \in L^{2z}(\M)s(\varphi)$ 
    such that 
    identity \eqref{alpha z identity prime} holds.
    Therefore, the claim is shown.
    \par
    In this case, the last equality holds by 
    $\|y\|_{2z}^{2z} = \tr((y^* y)^{z})
    = \tr((x^{1/2}uu^*x^{1/2})^z) = \tr(x^z) 
    = \|x \|_z^z$, 
    where $u$ is the partial isometry in \eqref{polar decomp of the identity}.
\end{proof}
\begin{Rmk}
    As the same argument of \cite[Lemma 8]{KatoUeda23}, 
    identity \eqref{alpha z identity prime} with $\alpha > 1$
    uniquely determines
    $y \in L^{2z}(\M) s(\varphi)$ if it exists.
\end{Rmk}
\begin{Rmk} \label{identity lead support proj}
    The condition that there exists $x \in s(\varphi) L^z(\M)s(\varphi)$
    such that identity \eqref{alpha z identity} holds
    (equivalently, there exists $y \in L^{2z}(\M)s(\varphi)$
    such that the identity \eqref{alpha z identity prime} holds)
    leads to $s(\psi) \le s(\varphi)$.
    We can see it by multiplying $s(\varphi)$ to \eqref{alpha z identity} 
    from right and left 
    (resp.~multiplying $s(\varphi)$ 
    to \eqref{alpha z identity prime} from right and taking the adjoint).
\end{Rmk}
%%%%%%%%%%%%
The $\alpha$-$z$-R\'{e}nyi divergence 
is a two parameter simultaneous generalization of 
(the Petz type or standard) R\'{e}nyi divergence 
(cf.~\cite{Petz85}, \cite{Hiai18})
and the sandwiched R\'{e}nyi divergence 
(cf.~\cite{BST18}, \cite{Jencova18}, \cite{Jencova21}).
(Both the divergences are summarized in \cite[Chap.~3]{Hiai21}.)
This fact is referred to \cite[Sec.~II]{AudenaertDatta15} for the finite dimensional case 
and to \cite[pp.~92--93]{Mosonyi23} for the infinite dimensional type I case.
Similarly, our $\alpha$-$z$-R\'{e}nyi divergence is also 
a simultaneous generalization of these divergences
(though partially stated in \cite[Lemma 9]{KatoUeda23}). 
\par
At first, we recall the concepts of R\'{e}nyi divergence and sandwiched R\'{e}nyi divergence. 
Here we employ the explicit description of 
$\renyiQ{\psi}{\varphi}{\alpha}$ in \cite[Theorem 3.6]{Hiai21}.
For $\psi$, $\varphi \in \M^+_*$ and $\alpha > 0$ with $\alpha \ne 1$, 
we put 
\begin{equation*}
    \renyiQ{\psi}{\varphi}{\alpha} = 
    \begin{cases}
        \tr \left( h_{\psi}^{\alpha} h_{\varphi}^{1-\alpha} \right) &(0<\alpha < 1), \\
        \| \eta \|_2^2 &(\text{$\alpha > 1$ 
        and \eqref{renyi identity} holds with $\eta \in L^2(\M)s(\varphi)$}), \\
        \infty &(\text{otherwise}),
    \end{cases}
\end{equation*}
where
\begin{equation}\label{renyi identity} \tag{$\clubsuit$}
    h_{\psi}^{\alpha/2} = \eta h_{\varphi}^{(\alpha -1)/2},
\end{equation}
and  
\begin{equation*}
    \sandQ{\psi}{\varphi}{\alpha} = 
    \begin{cases}
        \tr \left( (h_{\varphi}^{(1-\alpha)/2\alpha} h_{\psi} h_{\varphi}^{(1-\alpha)/2\alpha})^\alpha \right) &(0 < \alpha < 1), \\
        \| h_{\psi} \|_{\alpha, \varphi, 1/2}^{\alpha} &(\text{$\alpha > 1$, $s(\psi) \le s(\varphi)$ and $h_{\psi} \in L^{\alpha}(\M, \varphi)_{1/2}$}), \\
        \infty &(\text{otherwise}).
    \end{cases}
\end{equation*}
When $\psi \ne 0$, 
the \emph{R\'{e}nyi divergence $\renyiD{\psi}{\varphi}{\alpha}$} and 
the \emph{sandwiched R\'{e}nyi divergence $\sandD{\psi}{\varphi}{\alpha}$} are defined by
\begin{align*}
    \renyiD{\psi}{\varphi}{\alpha} 
    = \frac{1}{\alpha - 1} \log \frac{\renyiQ{\psi}{\varphi}{\alpha}}{\psi(1)}, 
    \qquad \sandD{\psi}{\varphi}{\alpha} 
    = \frac{1}{\alpha - 1} \log \frac{\sandQ{\psi}{\varphi}{\alpha}}{\psi(1)}.
\end{align*}
\begin{Rmk} 
    \begin{enumerate}[label= (\roman*)]
        \item The condition of support projection is assumed 
        in the definition of the R\'{e}nyi divergence for $\alpha > 1$
        in \cite[Definition 3.1]{Hiai21}. 
        However, identity \eqref{renyi identity} leads to
        $s(\psi) \le s(\varphi)$ as Remark \ref{identity lead support proj}.
        (See also the next lemma.)
        Hence, we do not need to put the condition $s(\psi)\le s(\varphi)$ 
        in the definition of $\renyiQ{\psi}{\varphi}{\alpha}$ for $\alpha >1$.
        \item The sandwiched R\'{e}nyi divergence is only defined for 
        $\alpha \in [1/2, \infty) \setminus \{1\}$ in \cite{BST18}, \cite{Jencova21} 
        since DPI (Data Processing Inequality) does not hold for 
        $0 < \alpha < 1/2$
        (even in the finite dimensional case).
        However, our definition of the sandwiched R\'{e}nyi divergence 
        allows all $\alpha > 0$ with $\alpha \ne 1$.
        See also \cite[Remark 3.17]{Hiai21}.
    \end{enumerate}
\end{Rmk}
\begin{Lem} \label{alpha 1 renyi}
    $\Q{\psi}{\varphi}{\alpha}{1} = \renyiQ{\psi}{\varphi}{\alpha}$ 
    holds for $\alpha \in (0, \infty)\setminus\{1\}$. 
    Hence, $\D{\psi}{\varphi}{\alpha}{1} = \renyiD{\psi}{\varphi}{\alpha}$ holds 
    for $\alpha \in (0, \infty)\setminus\{1\}$.
\end{Lem}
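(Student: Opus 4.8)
The plan is to separate the two ranges $0<\alpha<1$ and $\alpha>1$, since the defining formulas for $\Q{\psi}{\varphi}{\alpha}{1}$ and $\renyiQ{\psi}{\varphi}{\alpha}$ have a different shape there; once $\Q{\psi}{\varphi}{\alpha}{1}=\renyiQ{\psi}{\varphi}{\alpha}$ is established, the equality $\D{\psi}{\varphi}{\alpha}{1}=\renyiD{\psi}{\varphi}{\alpha}$ for $\psi\neq 0$ follows immediately from the common defining formula $\tfrac{1}{\alpha-1}\log(\,\cdot\,/\psi(1))$. The case $\psi=0$ is trivial: the left-hand $Q$ vanishes by Remark \ref{basical remark of alpha z renyi}\ref{basical remark of alpha z renyi 1}, and the right-hand $Q$ vanishes too (for $0<\alpha<1$ because $h_{0}^{\alpha}=0$, and for $\alpha>1$ because $\eta=0\in L^{2}(\M)s(\varphi)$ satisfies \eqref{renyi identity}). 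So assume $\psi\neq 0$.

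For $0<\alpha<1$ we have $\Q{\psi}{\varphi}{\alpha}{1}=\tr\bigl(h_{\varphi}^{(1-\alpha)/2}h_{\psi}^{\alpha}h_{\varphi}^{(1-\alpha)/2}\bigr)$ and $\renyiQ{\psi}{\varphi}{\alpha}=\tr\bigl(h_{\psi}^{\alpha}h_{\varphi}^{1-\alpha}\bigr)$, so it suffices to show these two traces coincide. First I would record the H\"{o}lder bookkeeping: $h_{\psi}^{\alpha}\in L^{1/\alpha}(\M)_{+}$ and $h_{\varphi}^{(1-\alpha)/2}\in L^{2/(1-\alpha)}(\M)_{+}$, hence $h_{\psi}^{\alpha}h_{\varphi}^{(1-\alpha)/2}\in L^{2/(1+\alpha)}(\M)$ and then $h_{\varphi}^{(1-\alpha)/2}\bigl(h_{\psi}^{\alpha}h_{\varphi}^{(1-\alpha)/2}\bigr)\in L^{1}(\M)$, so all the traces in sight are defined. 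Then I would apply the cyclicity of $\tr$ in the form $\tr(ab)=\tr(ba)$ for $a\in L^{p}(\M)$, $b\in L^{q}(\M)$ with $1/p+1/q=1$, taking $a=h_{\varphi}^{(1-\alpha)/2}$ and $b=h_{\psi}^{\alpha}h_{\varphi}^{(1-\alpha)/2}$ (whose exponents $2/(1-\alpha)$ and $2/(1+\alpha)$ are conjugate), together with $h_{\varphi}^{(1-\alpha)/2}h_{\varphi}^{(1-\alpha)/2}=h_{\varphi}^{1-\alpha}$. This gives $\tr\bigl(h_{\varphi}^{(1-\alpha)/2}h_{\psi}^{\alpha}h_{\varphi}^{(1-\alpha)/2}\bigr)=\tr\bigl(h_{\psi}^{\alpha}h_{\varphi}^{1-\alpha}\bigr)$, as wanted. (Alternatively one can route the same computation through Lemma \ref{trace equality} with exponent $1$, at the cost of one further use of cyclicity.)

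For $\alpha>1$ I would simply invoke Lemma \ref{identity lemma} with $z=1$. That lemma states that some $x\in s(\varphi)L^{1}(\M)s(\varphi)$ satisfies \eqref{alpha z identity} (at $z=1$) if and only if some $y\in L^{2}(\M)s(\varphi)$ satisfies $h_{\psi}^{\alpha/2}=yh_{\varphi}^{(\alpha-1)/2}$, and that then $\|x\|_{1}=\|y\|_{2}^{2}$; but this last identity is literally \eqref{renyi identity} with $\eta=y$. Hence the finite branch in the definition of $\Q{\psi}{\varphi}{\alpha}{1}$ is active exactly when the finite branch in the definition of $\renyiQ{\psi}{\varphi}{\alpha}$ is, and in that case $\Q{\psi}{\varphi}{\alpha}{1}=\|x\|_{1}=\|y\|_{2}^{2}=\renyiQ{\psi}{\varphi}{\alpha}$; otherwise both equal $\infty$. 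The whole argument is essentially bookkeeping: the only points needing a little care are confirming that the relevant products lie in $L^{1}(\M)$ so that cyclicity of $\tr$ legitimately applies in the first case, and observing that \eqref{alpha z identity prime} at $z=1$ coincides with \eqref{renyi identity} so that Lemma \ref{identity lemma} transfers verbatim in the second. I do not anticipate a genuine obstacle.
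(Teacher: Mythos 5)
Your proposal is correct and follows the same route as the paper: the case $\alpha>1$ is handled by invoking Lemma \ref{identity lemma} with $z=1$ exactly as the paper does, and for $0<\alpha<1$ you simply spell out (via tracial cyclicity and the H\"{o}lder exponent bookkeeping) the step the paper dismisses as ``clear from the definition.'' The extra care with the $\psi=0$ case and with verifying that the products lie in $L^1(\M)$ is harmless and fine.
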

\begin{proof}
    When $0 < \alpha < 1$, it is clear from the definition of 
    $\Q{\psi}{\varphi}{\alpha}{1}$ and $\renyiQ{\psi}{\varphi}{\alpha}$. 
    When $\alpha > 1$, it immediately follows from Lemma \ref{identity lemma} 
    with $z = 1$.
\end{proof}
\begin{Rmk} \label{alpha alpha sandwich}
    $\Q{\psi}{\varphi}{\alpha}{\alpha} = \sandQ{\psi}{\varphi}{\alpha}$ holds 
    for $\alpha \in (0, \infty) \setminus \{1\}$, 
    see \cite[Lemma 9]{KatoUeda23}.
    Remark that \cite{KatoUeda23} only shows the case that $\alpha \ge 1/2$, 
    but the discussion there works even
    for $0 < \alpha < 1/2$.
\end{Rmk}
%%%%%%
\section{Properties of the \texorpdfstring{$\alpha$-$z$-R\'{e}nyi}{alpha-z-R\'{e}nyi} divergence}
Recall that we assume that $\psi \ne 0$ whenever 
we discuss $\D{\psi}{\varphi}{\alpha}{z}$.
\subsection{Properties for \texorpdfstring{$0 < \alpha < 1$}{0 < alpha < 1}}
In this section, we will give properties of 
the $\alpha$-$z$-R\'{e}nyi divergence 
for $0 < \alpha < 1$.
%%%%%%%%%%%%%%%%%%%%
%%%%%%%%%%%%%%%%%%%
\begin{Thm} \label{Thm1}
    Let $\psi$, $\varphi$, $\psi_1$, $\psi_2$, $\varphi_1$, 
    $\varphi_2 \in \M_*^+$,  
    $0 < \alpha < 1$ and $z > 0$. 
    Then we have following properties:
    \begin{enumerate}[label=\rm{(\roman*)}]
        \item \label{scaling property1} 
        (\emph{Scaling property} or \emph{Homogeneity}) 
        For any $\lambda$, $\mu \ge 0$, 
        \begin{equation*} 
            \Q{\lambda\psi}{\mu\varphi}{\alpha}{z}
            = \lambda^{\alpha}\mu^{1-\alpha}\Q{\psi}{\varphi}{\alpha}{z}, 
            \qquad\D{\lambda\psi}{\mu\varphi}{\alpha}{z}
            = \D{\psi}{\varphi}{\alpha}{z} + \log\lambda - \log \mu. 
        \end{equation*}
        In particular, the homogeneity $\Q{\lambda\psi}{\lambda\varphi}{\alpha}{z}
        = \lambda\Q{\psi}{\varphi}{\alpha}{z}$ holds 
        for any $\lambda \ge 0$.
        \item \label{generalized mean1}
        (\emph{Generalized mean property} or \emph{Additivity under direct sum})
        \begin{equation*} 
            \Q{\psi_1 \oplus \psi_2}{\varphi_1 \oplus \varphi_2}{\alpha}{z}
            = \Q{\psi_1}{\varphi_1}{\alpha}{z} 
            + \Q{\psi_2}{\varphi_2}{\alpha}{z}.
        \end{equation*}
        In other words, 
        the continuous strictly increasing function 
        $g(t) = \exp((\alpha-1)t)$ enjoys
        \begin{equation*}
            (\psi_1(1) + \psi_2(1)) 
            g(\D{\psi_1 \oplus \psi_2}{\varphi_1 \oplus \varphi_2}{\alpha}{z})
            = \psi_1(1) g(\D{\psi_1}{\varphi_1}{\alpha}{z}) 
            + \psi_2(1)g(\D{\psi_2}{\varphi_2}{\alpha}{z}).
        \end{equation*}
        \item \label{order relation1}
        (\emph{Order relations})
        If $\psi_1 \le \psi_2$ and $z \ge \alpha$, then
        $\Q{\psi_1}{\varphi}{\alpha}{z} \le \Q{\psi_2}{\varphi}{\alpha}{z}$, 
        and if $\varphi_1 \le \varphi_2$ and $z \ge 1 - \alpha$, then
        $\Q{\psi}{\varphi_1}{\alpha}{z} \le \Q{\psi}{\varphi_2}{\alpha}{z}$.        
        Hence, 
        if $z \ge \max\{\alpha, 1-\alpha\}$, $\psi_1 \le \psi_2$
        and $\varphi_1 \le \varphi_2$, 
        then $\Q{\psi_1}{\varphi_1}{\alpha}{z} \le \Q{\psi_2}{\varphi_2}{\alpha}{z}$,
        and if $z \ge 1 - \alpha$ and $\varphi_1 \le \varphi_2$, 
        then $\D{\psi}{\varphi_1}{\alpha}{z} \ge \D{\psi}{\varphi_2}{\alpha}{z}$.
        \item \label{jointly continuity}
        (\emph{Jointly continuity}) 
        The map $(\psi, \varphi) \in \M_*^+ \times \M_*^+
        \longmapsto \Q{\psi}{\varphi}{\alpha}{z}$ 
        is jointly continuous in the norm topology.
        Hence, the map $(\psi, \varphi) \in (\M_*^+ \setminus \{0\}) \times \M_*^+
        \longmapsto \D{\psi}{\varphi}{\alpha}{z}$
        is jointly continuous in the norm topology too. 
        \item \label{limit of epsilon1}
        If $z \ge \max\{ \alpha, 1-\alpha \}$, then
        \begin{equation*} 
            \Q{\psi}{\varphi}{\alpha}{z}
            = \lim_{\varepsilon \searrow 0} \Q{\psi + \varepsilon \varphi}{\varphi + \varepsilon \psi}{\alpha}{z}
            = \inf_{\varepsilon > 0} \Q{\psi + \varepsilon \varphi}{\varphi + \varepsilon \psi}{\alpha}{z}.
        \end{equation*}
        \item \label{variational expression1}
        (The \emph{variational expression} of the $\alpha$-$z$-R\'{e}nyi divergence)
        \begin{equation} \label{variational expression for alpha < 1 ineq}
            \Q{\psi}{\varphi}{\alpha}{z}
            \le
            \inf_{a \in \M_{++}}\left\{ 
            \alpha \tr \left( (a^{1/2} h_{\psi}^{\alpha/z} a^{1/2})^{z/\alpha} \right)
            + (1 - \alpha) \tr \left( (a^{-1/2} h_{\varphi}^{(1-\alpha)/z} a^{-1/2})^{z/(1-\alpha)} \right)
            \right\}
        \end{equation}
        where $\M_{++}$ is the set of all positive invertible elements in $\M$.
        Furthermore, if $z \ge \max\{\alpha, 1-\alpha\}$ or
        $(\lambda^{-1}\varphi \le \psi \le \lambda \varphi$ for some
        $\lambda > 0)$ $\wedge$ $(z \ge \alpha \vee z \ge 1-\alpha)$, 
        then \eqref{variational expression for alpha < 1 ineq} becomes equality, 
        that is,
        \begin{equation} \label{variational expression for alpha < 1}
            \Q{\psi}{\varphi}{\alpha}{z}
            = \inf_{a \in \M_{++}}\left\{ 
            \alpha \tr \left( (a^{1/2} h_{\psi}^{\alpha/z} a^{1/2})^{z/\alpha} \right)
            + (1 - \alpha) \tr \left( (a^{-1/2} h_{\varphi}^{(1-\alpha)/z} a^{-1/2})^{z/(1-\alpha)} \right)
            \right\}.
        \end{equation}
        \item \label{strict positivity1}
        (\emph{Strict positivity})
        Assume that $\psi$, $\varphi \ne 0$.
        \begin{align}
            &\Q{\psi}{\varphi}{\alpha}{z} 
            \le \psi(1)^{\alpha}\varphi(1)^{1-\alpha}, \label{strict positivity ineq}\\
            &\D{\psi}{\varphi}{\alpha}{z} \ge \log\frac{\psi(1)}{\varphi(1)}. \notag
        \end{align}
        Moreover, the above inequalities become equalities 
        if $(1/\psi(1))\psi = (1/\varphi(1))\varphi$. 
        Also, if $z \ge 1/2$, then
        above inequalities become equalities only if 
        $(1/\psi(1))\psi = (1/\varphi(1))\varphi$.
        In particular, if $\psi(1) = \varphi(1) > 0$, then 
        $\D{\psi}{\varphi}{\alpha}{z} \ge 0$,
        and if $z \ge 1/2$, then
        equality holds if and only if
        $\psi = \varphi$.
        \item \label{DPI1}
        (\emph{DPI} or \emph{Monotonicity}) 
        Let $\N$ be a $\sigma$-finite von Neumann algebra and
        $\gamma\colon \N \longrightarrow \M$ be a unital normal positive map. 
        If $z \ge \max\{ \alpha, 1 - \alpha\}$, 
        then we have
        \begin{align} 
            \Q{\psi\circ\gamma}{\varphi\circ\gamma}{\alpha}{z} 
            \ge \Q{\psi}{\varphi}{\alpha}{z}, \label{DPI ineq} \\
            \D{\psi\circ\gamma}{\varphi\circ\gamma}{\alpha}{z} 
            \le \D{\psi}{\varphi}{\alpha}{z}. \notag
        \end{align}
        \item \label{jointly convexity1}
        (\emph{Jointly convexity}) 
        If $z \ge \max\{ \alpha, 1 - \alpha\}$, 
        then $(\psi, \varphi) \longmapsto \Q{\psi}{\varphi}{\alpha}{z}$ 
        is jointly concave, 
        that is, for each $\lambda \in [0, 1]$, we have
        \begin{equation*}
            \Q{\lambda \psi_1 + (1-\lambda)\psi_2}{\lambda \varphi_1 + (1-\lambda)\varphi_2}{\alpha}{z}
            \ge \lambda \Q{\psi_1}{\varphi_1}{\alpha}{z}
            + (1- \lambda) \Q{\psi_2}{\varphi_2}{\alpha}{z}.
        \end{equation*}
        Hence, if $z \ge \max\{ \alpha, 1 - \alpha\}$, then
        $(\psi, \varphi) \longmapsto \D{\psi}{\varphi}{\alpha}{z}$
        is jointly convex on 
        $\{ (\psi, \varphi) \in \M_*^+ \times \M_*^+ \ ;\  \psi(1) = \mu\}$ 
        for any fixed $\mu > 0$.
        \item \label{monotonicity1}
        (\emph{Monotonicity} for $z$) 
        If $0 < z \le z'$, 
        then we have 
        $\Q{\psi}{\varphi}{\alpha}{z} \ge \Q{\psi}{\varphi}{\alpha}{z'}$
        and $\D{\psi}{\varphi}{\alpha}{z} \le \D{\psi}{\varphi}{\alpha}{z'}$.
    \end{enumerate}
\end{Thm}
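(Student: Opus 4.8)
The plan is to arrange the ten items by depth: \ref{scaling property1}--\ref{limit of epsilon1} and \ref{monotonicity1} are "soft" and reduce to the definition together with the preparatory lemmas; the core is the variational expression \ref{variational expression1}, from which \ref{strict positivity1}, \ref{DPI1} and \ref{jointly convexity1} are then extracted essentially as in Zhang's finite-dimensional argument. In each item the divergence statement follows from the $\Q{}{}{\alpha}{z}$-statement on applying $\tfrac{1}{\alpha-1}\log(\cdot)$ and using $\tfrac{1}{\alpha-1}<0$, so I concentrate on $\Q{}{}{\alpha}{z}$.

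For \ref{scaling property1} and \ref{generalized mean1} I would just unfold the definition, using linearity of $\varphi\mapsto h_\varphi$ (so $h_{\lambda\psi}=\lambda h_\psi$, and $h_{\psi_1\oplus\psi_2}=h_{\psi_1}\oplus h_{\psi_2}$ on $L^1(\M_1)\oplus L^1(\M_2)=L^1(\M_1\oplus\M_2)$) and the fact that fractional powers, products and $\tr$ respect scalar multiples and direct sums. For \ref{order relation1}: $\psi_1\le\psi_2$ gives $h_{\psi_1}\le h_{\psi_2}$, and since $z\ge\alpha$ makes $t\mapsto t^{\alpha/z}$ operator monotone, $h_{\psi_1}^{\alpha/z}\le h_{\psi_2}^{\alpha/z}$, hence $h_\varphi^{(1-\alpha)/2z}h_{\psi_1}^{\alpha/z}h_\varphi^{(1-\alpha)/2z}\le h_\varphi^{(1-\alpha)/2z}h_{\psi_2}^{\alpha/z}h_\varphi^{(1-\alpha)/2z}$ in $L^z(\M)_+$, and Lemma \ref{lp norm order} applied to $\|\cdot\|_z^z=\tr((\cdot)^z)$ finishes it; the claim for the second slot is reduced to this one by Lemma \ref{trace equality}, which gives $\Q{\psi}{\varphi}{\alpha}{z}=\tr((h_\psi^{\alpha/2z}h_\varphi^{(1-\alpha)/z}h_\psi^{\alpha/2z})^z)$, after which $t\mapsto t^{(1-\alpha)/z}$ ($z\ge1-\alpha$) is operator monotone. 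For \ref{jointly continuity}: if $\|h_{\psi_n}-h_\psi\|_1\to0$ and $\|h_{\varphi_n}-h_\varphi\|_1\to0$, then Lemma \ref{norm continuity} gives $h_{\psi_n}^{\alpha/z}\to h_\psi^{\alpha/z}$ in $L^{z/\alpha}(\M)$ and $h_{\varphi_n}^{(1-\alpha)/2z}\to h_\varphi^{(1-\alpha)/2z}$ in $L^{2z/(1-\alpha)}(\M)$, Hölder's inequality gives convergence of the triple product in $L^z(\M)$, and continuity of the norm gives convergence of $\Q{}{}{\alpha}{z}$ (always finite for $0<\alpha<1$); continuity and strict positivity of $\psi\mapsto\psi(1)=\tr(h_\psi)$ away from $0$ then give joint continuity of $\D{}{}{\alpha}{z}$. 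Item \ref{limit of epsilon1} is now immediate: for $z\ge\max\{\alpha,1-\alpha\}$, \ref{order relation1} makes $\varepsilon\mapsto\Q{\psi+\varepsilon\varphi}{\varphi+\varepsilon\psi}{\alpha}{z}$ nondecreasing, so its infimum over $\varepsilon>0$ is its limit as $\varepsilon\searrow0$, which by \ref{jointly continuity} is $\Q{\psi}{\varphi}{\alpha}{z}$. Finally \ref{monotonicity1} is Araki--Lieb--Thirring: for $0<z\le z'$ set $r=z'/z\ge1$, $B=h_\varphi^{(1-\alpha)/2z'}$, $C=h_\psi^{\alpha/z'}$; then $\Q{\psi}{\varphi}{\alpha}{z}=\tr((B^{r/2}C^{r}B^{r/2})^{z})$ and $\Q{\psi}{\varphi}{\alpha}{z'}=\tr((B^{1/2}CB^{1/2})^{rz})$, and the ALT trace inequality $\tr((B^{1/2}CB^{1/2})^{rz})\le\tr((B^{r/2}C^{r}B^{r/2})^{z})$ is exactly the claim; I would supply this in the Haagerup $L^p$-setting as a short separate lemma, by complex interpolation as in the classical proof.

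For the inequality \eqref{variational expression for alpha < 1 ineq}, fix $a\in\M_{++}$ and put $v=a^{1/2}h_\psi^{\alpha/2z}\in L^{2z/\alpha}(\M)$, $K=a^{-1/2}h_\varphi^{(1-\alpha)/z}a^{-1/2}\in L^{z/(1-\alpha)}(\M)_+$. The factors $a^{\pm1/2}$ cancel inside $v^*Kv=h_\psi^{\alpha/2z}h_\varphi^{(1-\alpha)/z}h_\psi^{\alpha/2z}$, so by Lemma \ref{trace equality} $\Q{\psi}{\varphi}{\alpha}{z}^{1/z}=\|v^*Kv\|_z$; generalized Hölder gives $\|v^*Kv\|_z\le\|v\|_{2z/\alpha}^2\|K\|_{z/(1-\alpha)}$, and Lemma \ref{trace equality} again gives $\|v\|_{2z/\alpha}^2=\|v^*v\|_{z/\alpha}=\|a^{1/2}h_\psi^{\alpha/z}a^{1/2}\|_{z/\alpha}$. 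Raising to the power $z$ and using $\|X\|_{z/\alpha}^z=\tr(X^{z/\alpha})^{\alpha}$, $\|Y\|_{z/(1-\alpha)}^z=\tr(Y^{z/(1-\alpha)})^{1-\alpha}$ for $X,Y\ge0$, we get $\Q{\psi}{\varphi}{\alpha}{z}\le\tr((a^{1/2}h_\psi^{\alpha/z}a^{1/2})^{z/\alpha})^{\alpha}\,\tr((a^{-1/2}h_\varphi^{(1-\alpha)/z}a^{-1/2})^{z/(1-\alpha)})^{1-\alpha}$, and the scalar Young inequality $s^\alpha t^{1-\alpha}\le\alpha s+(1-\alpha)t$ yields \eqref{variational expression for alpha < 1 ineq}. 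Choosing $a=c\cdot1$ and minimizing over $c>0$ (again by Young) gives at once $\Q{\psi}{\varphi}{\alpha}{z}\le\psi(1)^{\alpha}\varphi(1)^{1-\alpha}$, i.e.\ \eqref{strict positivity ineq}.

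For the equality \eqref{variational expression for alpha < 1}, I would first treat the comparable case $\lambda^{-1}\varphi\le\psi\le\lambda\varphi$ (where only $z\ge\min\{\alpha,1-\alpha\}$ is needed): there $s(\psi)=s(\varphi)$, and the two-sided bound lets one write down an explicit $a\in\M_{++}$, built as in the finite-dimensional computation from $h_\psi$, $h_\varphi$ and the associated relative modular data (the comparability being exactly what makes $a$ positive and invertible), for which both the Hölder and the Young step above are equalities. For general $z\ge\max\{\alpha,1-\alpha\}$, apply this to $\psi_\varepsilon:=\psi+\varepsilon\varphi$, $\varphi_\varepsilon:=\varphi+\varepsilon\psi$, which satisfy $\lambda_\varepsilon^{-1}\varphi_\varepsilon\le\psi_\varepsilon\le\lambda_\varepsilon\varphi_\varepsilon$ with $\lambda_\varepsilon=\varepsilon+\varepsilon^{-1}$; writing $F_a(\psi,\varphi)=\alpha\tr((a^{1/2}h_\psi^{\alpha/z}a^{1/2})^{z/\alpha})+(1-\alpha)\tr((a^{-1/2}h_\varphi^{(1-\alpha)/z}a^{-1/2})^{z/(1-\alpha)})$ for the right-hand functional, each $F_a$ is nonincreasing in $\varepsilon$ (Lemma \ref{lp norm order}), so $\inf_aF_a(\psi,\varphi)\le\inf_aF_a(\psi_\varepsilon,\varphi_\varepsilon)=\Q{\psi_\varepsilon}{\varphi_\varepsilon}{\alpha}{z}\to\Q{\psi}{\varphi}{\alpha}{z}$ by \ref{limit of epsilon1}, which with \eqref{variational expression for alpha < 1 ineq} gives equality. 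Granting \eqref{variational expression for alpha < 1}, \ref{DPI1} and \ref{jointly convexity1} follow: $\Q{}{}{\alpha}{z}$ is an infimum of the $F_a$, each jointly concave in $(\psi,\varphi)$ because $\psi\mapsto h_\psi$ is linear and, as $z\ge\max\{\alpha,1-\alpha\}$ puts $\alpha/z,(1-\alpha)/z$ in $(0,1]$, the maps $h\mapsto\tr((bh^pb^*)^{1/p})$ ($0<p\le1$) are concave by Epstein's concavity theorem (which I would transfer to the Haagerup setting), so $\inf_aF_a$ is jointly concave; and \ref{DPI1} comes from the same formula by bounding, for $b\in\N_{++}$, $F_b^{\N}(\psi\circ\gamma,\varphi\circ\gamma)\ge F_{\gamma(b)}^{\M}(\psi,\varphi)\ge\Q{\psi}{\varphi}{\alpha}{z}$, using operator-Jensen-type inequalities for the unital normal positive $\gamma$ (such as $\gamma(x^p)\le\gamma(x)^p$ for $0<p\le1$) together with the interaction of $\gamma$ with the Haagerup-$L^p$ structure. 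The "only if" in \ref{strict positivity1} under $z\ge1/2$ is then read off from the equality condition for generalized Hölder's inequality established in \ref{Appendix Holder}. I anticipate that the main obstacle is precisely this equality half of \ref{variational expression1} — producing the optimizer $a$ in the comparable case in a form genuinely valid in an arbitrary von Neumann algebra and then justifying the passage $\varepsilon\searrow0$ — with the porting of Epstein-type concavity and of Araki--Lieb--Thirring to Haagerup $L^p$-spaces, and the behaviour of a unital normal positive map on these spaces in \ref{DPI1}, being the remaining places where real work is needed; by contrast \ref{scaling property1}--\ref{limit of epsilon1} and the inequalities in \ref{variational expression1} and \ref{strict positivity1} are routine given the stated lemmas.
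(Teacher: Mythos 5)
Your overall architecture matches the paper's: items \ref{scaling property1}--\ref{limit of epsilon1} and \ref{monotonicity1} are handled exactly as you describe (the ALT inequality you propose to prove is already available in Haagerup $L^p$-spaces by Kosaki, so no interpolation argument is needed; also note that for $z<1/2$ the joint continuity needs the quasi-norm inequality $\|a+b\|_p^p\le\|a\|_p^p+\|b\|_p^p$ in place of the triangle inequality, which is why the paper splits into two cases), and the Hölder--Young derivation of \eqref{variational expression for alpha < 1 ineq} and of \eqref{strict positivity ineq} is the paper's. Your passage from the comparable case to general $z\ge\max\{\alpha,1-\alpha\}$ via $F_a(\psi,\varphi)\le F_a(\psi_\varepsilon,\varphi_\varepsilon)$ and \ref{limit of epsilon1} is in fact a slight streamlining of the paper, which instead exchanges the two infima and verifies monotone convergence of each norm as $\varepsilon\searrow0$. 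In the comparable case itself, however, you only assert that an optimizer exists; the paper constructs it concretely, producing $b,c\in\M$ with $h_{\varphi}^{(1-\alpha)/2z}=b\,(h_{\varphi}^{(1-\alpha)/2z}h_{\psi}^{\alpha/z}h_{\varphi}^{(1-\alpha)/2z})^{(1-\alpha)/2}$ and $c=b^{-1}$ via \cite[Lemma A.58]{Hiai21}, and taking $a_0=bb^*$; this step is where the two-sided comparability is actually used and cannot be left as ``as in the finite-dimensional computation.''

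The genuine gap is in \ref{DPI1}. The inequality $F_b^{\N}(\psi\circ\gamma,\varphi\circ\gamma)\ge F_{\gamma(b)}^{\M}(\psi,\varphi)$ you want does \emph{not} follow from operator Jensen inequalities such as $\gamma(x^p)\le\gamma(x)^p$: the term to be controlled is $\tr\bigl((b^{1/2}h_{\psi\circ\gamma}^{\alpha/z}b^{1/2})^{z/\alpha}\bigr)$ versus $\tr\bigl((\gamma(b)^{1/2}h_{\psi}^{\alpha/z}\gamma(b)^{1/2})^{z/\alpha}\bigr)$, where $b$ lives in $\N$ but $h_{\psi}$ lives in a Haagerup $L^p$-space over $\M$, so no inequality internal to the algebras applies. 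The paper's key ingredient is the inequality $\|h_{\varphi}^{1/2p}\gamma(b)h_{\varphi}^{1/2p}\|_p\le\|h_{\varphi\circ\gamma}^{1/2p}\,b\,h_{\varphi\circ\gamma}^{1/2p}\|_p$ for $p\ge1$ (\ref{Appendix}), proved via Petz's recovery map $\gamma_{\varphi}^{\star}$ and Jen\v{c}ov\'{a}'s theorem that $(\gamma_{\varphi}^{\star})_*$ is a contraction on the symmetric Kosaki $L^p$-spaces; Choi's inequality $\gamma(b)^{-1}\le\gamma(b^{-1})$ then handles the $\varphi$-term, and one must also regularize both the functionals (to $\psi_\varepsilon,\varphi_\varepsilon$) and the map itself (to $\gamma_{\omega,\delta}=(1-\delta)\gamma+\delta\,\omega(\cdot)1_\M$) so that the relevant functionals are faithful before Petz's recovery map can be invoked. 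None of this is present in your sketch. Relatedly, your route to \ref{jointly convexity1} via Epstein's concavity theorem ``transferred to the Haagerup setting'' introduces a second unestablished ingredient that the paper deliberately avoids: once DPI is in hand, joint concavity is immediate from the block-diagonal embedding $\gamma(a)=a\oplus a$ together with \ref{generalized mean1} and \ref{scaling property1}, and you should take that route instead.
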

\begin{proof}
    \ref{scaling property1} 
    This immediately follows by definition. 
    \par
    \ref{generalized mean1} 
    By \cite[Remark 30]{Terp81}, 
    we have the natural identification 
    $L^p(\M \oplus \M) \simeq L^p(\M) \times L^p(\M)$ for each $p >0$, 
    and $h_{\varphi_1 \oplus \varphi_2} \simeq (h_{\varphi_1}, h_{\varphi_2})$
    and $\| (a, b)\|_p^p = \|a\|_p^p + \|b\|_p^p$.
    Remark that the discussion there works even for $0 < p < 1$.
    With this identification, 
    we immediately obtain the desired assertion.
    \par
    \ref{order relation1} 
    Assume that $\psi_1 \le \psi_2$, 
    so that $h_{\psi_1} \le h_{\psi_2}$.
    Observe that $f(t) = t^{\alpha/z}$ is an operator monotone function 
    on $[0, \infty)$ and $f \ge 0$ since $\alpha/z \in [0, 1]$.
    Thus, $h_{\psi_1}^{\alpha/z} \le h_{\psi_2}^{\alpha/z}$ 
    by \cite[Lemma B.7]{Hiai21}.
    Then, we have $h_{\varphi}^{(1-\alpha)/2z} h_{\psi_1}^{\alpha/z} 
    h_{\varphi}^{(1-\alpha)/2z} \le h_{\varphi}^{(1-\alpha)/2z} 
    h_{\psi_2}^{\alpha/z} h_{\varphi}^{(1-\alpha)/2z}$
    and therefore 
    $ \|h_{\varphi}^{(1-\alpha)/2z} h_{\psi_1}^{\alpha/z} 
    h_{\varphi}^{(1-\alpha)/2z}\|_z^z \le \| h_{\varphi}^{(1-\alpha)/2z} 
    h_{\psi_2}^{\alpha/z} h_{\varphi}^{(1-\alpha)/2z}\|_z^z$
    by Lemma \ref{lp norm order}. 
    This inequality is exactly 
    $\Q{\psi_1}{\varphi}{\alpha}{z} \le \Q{\psi_2}{\varphi}{\alpha}{z}$. 
    \par
    We have $h_{\psi}^{\alpha/2z} h_{\varphi_1}^{(1-\alpha)/z} 
    h_{\psi}^{\alpha/2z} 
    \le h_{\psi}^{\alpha/2z} h_{\varphi_2}^{(1-\alpha)/z} 
    h_{\psi}^{\alpha/2z}$
    by the same argument as above with $\varphi$ and 
    $(1 - \alpha)/z \in [0, 1]$
    if $\varphi_1 \le \varphi_2$ and $z \ge 1-\alpha$.
    Therefore, Lemmas \ref{lp norm order} and \ref{trace equality} show
    $\Q{\psi}{\varphi_1}{\alpha}{z} \le \Q{\psi}{\varphi_2}{\alpha}{z}$. 
    \par
    \ref{jointly continuity} 
    It suffices to show that 
    $(\psi, \varphi) \in \M_*^+ \times \M_*^+
    \longmapsto \Q{\psi}{\varphi}{\alpha}{z}$ is 
    jointly continuous in the norm topology.
    Let $\psi$, $\psi_n$, $\varphi$, $\varphi_n \in \M_*^+$ 
    such that $\|\psi_n - \psi \|$, $\|\varphi_n - \varphi\| \to 0$ 
    as $n \to \infty$. 
    (This topology is rewritten as $\|h_{\psi_n} - h_{\psi}\|_1$,
    $\|h_{\varphi_n} - h_{\varphi}\|_1 \to 0$ as $n \to \infty$.)
    When $z \ge 1/2$, 
    by the triangle inequality and H\"{o}lder's inequality, we have
    \begin{align*}
        &|\Q{\psi_n}{\varphi_n}{\alpha}{z}^{1/2z} 
        - \Q{\psi}{\varphi}{\alpha}{z}^{1/2z}| 
        = \left| \| h_{\psi_n}^{\alpha/2z} 
        h_{\varphi_n}^{(1-\alpha)/2z}\|_{2z} 
        - \| h_{\psi}^{\alpha/2z} h_{\varphi}^{(1-\alpha)/2z} \|_{2z} \right| \\
        &\qquad\le \| h_{\psi_n}^{\alpha/2z} h_{\varphi_n}^{(1-\alpha)/2z} 
        - h_{\psi}^{\alpha/2z} h_{\varphi}^{(1-\alpha)/2z} \|_{2z} \\
        &\qquad\le \| h_{\psi_n}^{\alpha/2z} -h_{\psi}^{\alpha/2z} \|_{2z/\alpha}
        \| h_{\varphi_n}^{(1-\alpha)/2z} \|_{2z/(1-\alpha)}
        + \| h_{\psi}^{\alpha/2z} \|_{2z/\alpha}
        \| h_{\varphi_n}^{(1-\alpha)/2z} 
        - h_{\varphi}^{(1-\alpha)/2z} \|_{2z/(1-\alpha)}. 
    \end{align*}
    When $z < 1/2$,
    by Lemma \ref{FackKosaki thm 4.9} and H\"{o}lder's inequality, we have
    \begin{align*}
        &|\Q{\psi_n}{\varphi_n}{\alpha}{z} - \Q{\psi}{\varphi}{\alpha}{z}| 
        = \left| \| h_{\psi_n}^{\alpha/2z} h_{\varphi_n}^{(1-\alpha)/2z}\|_{2z}^{2z} -
        \| h_{\psi}^{\alpha/2z} h_{\varphi}^{(1-\alpha)/2z} \|_{2z}^{2z} \right| \\
        &\qquad\le \| h_{\psi_n}^{\alpha/2z} h_{\varphi_n}^{(1-\alpha)/2z} 
        - h_{\psi}^{\alpha/2z} h_{\varphi}^{(1-\alpha)/2z} \|_{2z}^{2z} \\
        &\qquad\le \| h_{\psi_n}^{\alpha/2z} -h_{\psi}^{\alpha/2z} \|_{2z/\alpha}^{2z}
        \| h_{\varphi_n}^{(1-\alpha)/2z} \|_{2z/(1-\alpha)}^{2z}
        + \| h_{\psi}^{\alpha/2z} \|_{2z/\alpha}^{2z}
        \| h_{\varphi_n}^{(1-\alpha)/2z} 
        - h_{\varphi}^{(1-\alpha)/2z} \|_{2z/(1-\alpha)}^{2z}.
    \end{align*}
    Hence, the joint continuity holds for any $z > 0$ 
    by Lemma \ref{norm continuity}. 
    \par
    \ref{limit of epsilon1} immediately follows from 
    \ref{order relation1} and \ref{jointly continuity}.
    \par
    \ref{variational expression1} 
    This proof is similar to the proof of \cite[Lemma 3.19]{Hiai21}. 
    Firstly, we show inequality \eqref{variational expression for alpha < 1 ineq}.
    For every $a \in \M_{++}$, we have
    \begin{align}
        \Q{\psi}{\varphi}{\alpha}{z}
        &= \| h_{\psi}^{\alpha/2z} h_{\varphi}^{(1-\alpha)/2z} \|_{2z}^{2z}
        = \| h_{\psi}^{\alpha/2z} a^{1/2} a^{-1/2} 
        h_{\varphi}^{(1-\alpha)/2z} \|_{2z}^{2z} \notag\\
        &\le \| h_{\psi}^{\alpha/2z} a^{1/2} \|_{2z/\alpha}^{2z} 
        \|a^{-1/2} h_{\varphi}^{(1-\alpha)/2z} \|_{2z/(1-\alpha)}^{2z} 
        \label{holder alpha less than 1}\\
        &= \left( \tr \left( (a^{1/2} h_{\psi}^{\alpha/z} a^{1/2})^{z/\alpha} \right) \right)^{\alpha}
        \left(\tr \left( (a^{-1/2} h_{\varphi}^{(1-\alpha)/z} a^{-1/2})^{z/(1-\alpha)} \right) \right)^{1-\alpha} \notag \\
        &\le \alpha \tr \left( (a^{1/2} h_{\psi}^{\alpha/z} a^{1/2})^{z/\alpha} \right)
        + (1 - \alpha) \tr \left( (a^{-1/2} h_{\varphi}^{(1-\alpha)/z} a^{-1/2})^{z/(1-\alpha)} \right). \notag
    \end{align}
    Here, the first inequality is due to H\"{o}lder's inequality 
    with $1/2z = \alpha/2z + (1 - \alpha)/2z$ 
    and the second due to Young's inequality
    (or the weighted arithmetic mean and geometric mean inequality).
    \par
    We next show equality \eqref{variational expression for alpha < 1}.
    It suffices to prove the converse of inequality \eqref{variational expression for alpha < 1 ineq}.
    Firstly we assume that 
    $\lambda^{-1} \varphi \le \psi \le \lambda \varphi$ 
    for some $\lambda > 0$ and $z \ge \alpha$. 
    Then we may and do
    assume that $\psi$, $\varphi$ are faithful, 
    since $s(\psi) = s(\varphi)$ holds. 
    By \cite[Lemma B.7]{Hiai21},
    \begin{equation*}
        \lambda^{-\alpha/z}  h_{\varphi}^{\alpha/z} 
        \le h_{\psi}^{\alpha/z} 
        \le \lambda^{\alpha/z}  h_{\varphi}^{\alpha/z} 
    \end{equation*}
    since $\alpha/z \in (0, 1]$. 
    Then, multiplying $h_{\varphi}^{(1-\alpha)/2z}$ from both sides, 
    we have
    \begin{equation*}
        \lambda^{-\alpha/z}  h_{\varphi}^{1/z} 
        \le h_{\varphi}^{(1-\alpha)/2z} h_{\psi}^{\alpha/z} h_{\varphi}^{(1-\alpha)/2z} 
        \le \lambda^{\alpha/z}  h_{\varphi}^{1/z}.
    \end{equation*}
    By \cite[Lemma A.58]{Hiai21}, there exist $b$, $c \in \M$
    such that
    \begin{equation*}
        h_{\varphi}^{(1-\alpha)/2z} 
        = b (h_{\varphi}^{(1-\alpha)/2z} h_{\psi}^{\alpha/z} h_{\varphi}^{(1-\alpha)/2z})^{(1-\alpha)/2}, 
        \qquad (h_{\varphi}^{(1-\alpha)/2z} h_{\psi}^{\alpha/z} h_{\varphi}^{(1-\alpha)/2z})^{(1-\alpha)/2}
        = c h_{\varphi}^{(1-\alpha)/2z}.
    \end{equation*}
    We note that $0 < p = (1-\alpha)/z \le 1/z$ holds 
    by $0 < \alpha < 1$. Thus, the assumption of \cite[equation (A.24)]{Hiai21} holds.
    Clearly, $bc = cb = 1$ so that $c = b^{-1}$.
    We put $a_0 := b b^*$ thus $a_0^{-1} = c^* c$.
    Since 
    \begin{equation*}
        (h_{\varphi}^{(1-\alpha)/2z} h_{\psi}^{\alpha/z} h_{\varphi}^{(1-\alpha)/2z})^{(1-\alpha)/2} 
        b^* h_{\psi}^{\alpha/z} b 
        (h_{\varphi}^{(1-\alpha)/2z} h_{\psi}^{\alpha/z} h_{\varphi}^{(1-\alpha)/2z})^{(1-\alpha)/2}
        = h_{\varphi}^{(1-\alpha)/2z} h_{\psi}^{\alpha/z} h_{\varphi}^{(1-\alpha)/2z},
    \end{equation*}
    we have
    $b^* h_{\psi}^{\alpha/z} b 
    = (h_{\varphi}^{(1-\alpha)/2z} h_{\psi}^{\alpha/z} 
    h_{\varphi}^{(1-\alpha)/2z})^{\alpha}$.
    Thus, using $a_0 := b b^*$ and Lemma \ref{trace equality}, we obtain
    \begin{equation} \label{ve proof equality 1} 
        \tr \left( (a_0^{1/2} h_{\psi}^{\alpha/z} a_0^{1/2})^{z/\alpha} \right)
        = \tr \left( (h_{\varphi}^{(1-\alpha)/2z} h_{\psi}^{\alpha/z} h_{\varphi}^{(1-\alpha)/2z})^{z} \right). 
    \end{equation}
    Since
    \begin{equation*}
        h_{\varphi}^{(1-\alpha)/2z} a_0^{-1} h_{\varphi}^{(1-\alpha)/2z} 
        = h_{\varphi}^{(1-\alpha)/2z} c^* c h_{\varphi}^{(1-\alpha)/2z} 
        = (h_{\varphi}^{(1-\alpha)/2z} h_{\psi}^{\alpha/z} h_{\varphi}^{(1-\alpha)/2z})^{1-\alpha},
    \end{equation*}
    we similarly have
    \begin{equation} \label{ve proof equality 2}
        \tr \left( (a_0^{-1/2} h_{\varphi}^{(1-\alpha)/z} 
        a_0^{-1/2})^{z/(1-\alpha)} \right)
        = \tr \left( (h_{\varphi}^{(1-\alpha)/2z} h_{\psi}^{\alpha/z} h_{\varphi}^{(1-\alpha)/2z})^{z} \right). 
    \end{equation}
    Hence, by \eqref{ve proof equality 1}, \eqref{ve proof equality 2}, 
    \begin{align*}
        &\Q{\psi}{\varphi}{\alpha}{z} 
        = \tr \left( (h_{\varphi}^{(1-\alpha)/2z} h_{\psi}^{\alpha/z} h_{\varphi}^{(1-\alpha)/2z})^{z} \right)\\
        &\qquad= \alpha \tr \left( (h_{\varphi}^{(1-\alpha)/2z} h_{\psi}^{\alpha/z} h_{\varphi}^{(1-\alpha)/2z})^{z} \right)
        + (1-\alpha) \tr \left( (h_{\varphi}^{(1-\alpha)/2z} h_{\psi}^{\alpha/z} h_{\varphi}^{(1-\alpha)/2z})^{z} \right) \\
        &\qquad= \alpha \tr \left( (a_0^{1/2} h_{\psi}^{\alpha/z} a_0^{1/2})^{z/\alpha} \right)
        + (1-\alpha) \tr \left( (a_0^{-1/2} h_{\varphi}^{(1-\alpha)/z} a_0^{-1/2})^{z/(1-\alpha)} \right) \\
        &\qquad\ge \text{RHS of \eqref{variational expression for alpha < 1}}.
    \end{align*}
    Therefore, equality \eqref{variational expression for alpha < 1} holds 
    when $\lambda^{-1} \varphi \le \psi \le \lambda \varphi$ 
    for some $\lambda > 0$ and $z \ge \alpha$. 
    \par    
    We next consider the case of 
    $\lambda^{-1} \varphi \le \psi \le \lambda \varphi$ 
    for some $\lambda > 0$ and $z \ge 1 - \alpha$. 
    We replace $\psi$, $\varphi$ and $\alpha$ by $\varphi$, $\psi$ and $1-\alpha$, 
    then we can make the same argument as above. 
    Therefore, there exist $b'$, $c' \in \M$ such that
    \begin{equation*}
        h_{\psi}^{\alpha/2z} 
        = b' (h_{\psi}^{\alpha/2z} h_{\varphi}^{(1-\alpha)/z} h_{\psi}^{\alpha/2z})^{\alpha/2}, 
        \qquad(h_{\psi}^{\alpha/2z} h_{\varphi}^{(1-\alpha)/z} h_{\psi}^{\alpha/2z})^{\alpha/2}
        = c' h_{\psi}^{\alpha/2z}.
    \end{equation*}
    By the same calculation as above, we have
    \begin{equation*}
        h_{\psi}^{\alpha/2z}  c'^* c' h_{\psi}^{\alpha/2z} 
        = (h_{\psi}^{\alpha/2z} h_{\varphi}^{(1-\alpha)/z} h_{\psi}^{\alpha/2z})^\alpha, 
        \qquad b'^* h_{\varphi}^{(1-\alpha)/z} b' 
        = (h_{\psi}^{\alpha/2z} h_{\varphi}^{(1-\alpha)/z} h_{\psi}^{\alpha/2z})^{1-\alpha}
    \end{equation*}
    and, by putting $a_0' := c'^*c'$, 
    \begin{align*}
        &\tr \left( (a_0'^{1/2} h_{\psi}^{\alpha/z} a_0'^{1/2})^{z/\alpha} \right)
        = \tr \left( (h_{\psi}^{\alpha/2z} h_{\varphi}^{(1-\alpha)/z}h_{\psi}^{\alpha/2z})^{z} \right), \\
        &\tr \left( (a_0'^{-1/2} h_{\varphi}^{(1-\alpha)/z} a_0'^{-1/2})^{z/(1-\alpha)} \right)
        = \tr \left( (h_{\psi}^{\alpha/2z} h_{\varphi}^{(1-\alpha)/z}h_{\psi}^{\alpha/2z})^{z} \right).
    \end{align*}
    Since we can do the same calculation as above,
    equality \eqref{variational expression for alpha < 1} holds 
    in the case that $\lambda^{-1} \varphi \le \psi \le \lambda \varphi$ 
    for some $\lambda > 0$ and $z \ge 1-\alpha$. 
    \par
    Finally, we assume that $z \ge \max\{\alpha, 1-\alpha\}$.
    For any $\varepsilon > 0$, there exists $\lambda > 0$ such that 
    $\lambda^{-1}(\varphi+\varepsilon\psi)
    \le\psi + \varepsilon\varphi \le \lambda(\varphi+\varepsilon\psi)$.  
    Then, we can use equality \eqref{variational expression for alpha < 1} 
    for the first case, 
    and by \ref{limit of epsilon1}, 
    \begin{align*}
        &\Q{\psi}{\varphi}{\alpha}{z} 
        = \inf_{\varepsilon > 0}
        \Q{\psi + \varepsilon\varphi}{\varphi+\varepsilon\psi}{\alpha}{z} \\
        &\qquad= \inf_{\varepsilon > 0} \inf_{a \in \M_{++}}\left\{ 
        \alpha \tr \left( (a^{1/2} h_{\psi+\varepsilon\varphi}^{\alpha/z} a^{1/2})^{z/\alpha} \right)
        + (1 - \alpha) \tr \left( (a^{-1/2} h_{\varphi+\varepsilon\psi}^{(1-\alpha)/z} a^{-1/2})^{z/(1-\alpha)} \right)\right\} \\
        &\qquad= \inf_{a \in \M_{++}}\inf_{\varepsilon > 0}\left\{ 
        \alpha \| a^{1/2} (h_{\psi}+\varepsilon h_{\varphi})^{\alpha/z} a^{1/2}\|_{z/\alpha}^{z/\alpha} \right. \\
        &\qquad\qquad\qquad\qquad\qquad\left.
        + (1 - \alpha) \| a^{-1/2} (h_{\varphi}+\varepsilon h_{\psi})^{(1-\alpha)/z} a^{-1/2} \|_{z/(1-\alpha)}^{z/(1-\alpha)}
        \right\}.
    \end{align*}
    We do the same argument as the proof of \cite[Lemma 3.19]{Hiai21} 
    (see \cite[pp.~38--39]{Hiai21}). Since $\alpha/z$, $(1-\alpha)/z \in (0, 1)$, 
    we have
    \begin{align*}
        &\| a^{1/2} (h_{\psi}+\varepsilon h_{\varphi})^{\alpha/z} a^{1/2}\|_{z/\alpha}^{z/\alpha}
        \searrow \| a^{1/2} h_{\psi}^{\alpha/z} a^{1/2}\|_{z/\alpha}^{z/\alpha}, \\
        &\| a^{-1/2} (h_{\varphi}+\varepsilon h_{\psi})^{(1-\alpha)/z} a^{-1/2} \|_{z/(1-\alpha)}^{z/(1-\alpha)}
        \searrow \| a^{-1/2} h_{\varphi}^{(1-\alpha)/z} a^{-1/2} \|_{z/(1-\alpha)}^{z/(1-\alpha)}
        \qquad(\varepsilon \searrow 0).
    \end{align*}
    Therefore, if $z \ge \max\{\alpha, 1-\alpha\}$,
    then
    \begin{align*}
        &\Q{\psi}{\varphi}{\alpha}{z} 
        = \inf_{a \in \M_{++}}\inf_{\varepsilon > 0}\left\{ 
        \alpha \| a^{1/2} (h_{\psi}+\varepsilon h_{\varphi})^{\alpha/z} a^{1/2}\|_{z/\alpha}^{z/\alpha} \right. \\
        &\qquad\qquad\qquad\qquad\qquad\qquad\left.
        + (1 - \alpha) \| a^{-1/2} (h_{\varphi}+\varepsilon h_{\psi})^{(1-\alpha)/z} a^{-1/2} \|_{z/(1-\alpha)}^{z/(1-\alpha)}
        \right\} \\
        &\qquad= \inf_{a \in \M_{++}}\left\{ 
        \alpha \tr \left( (a^{1/2} h_{\psi}^{\alpha/z} a^{1/2})^{z/\alpha} \right)
        + (1 - \alpha) \tr \left( (a^{-1/2} h_{\varphi}^{(1-\alpha)/z} a^{-1/2})^{z/(1-\alpha)} \right)
        \right\}
    \end{align*}
    is obtained.
    \par
    \ref{strict positivity1} 
    We use inequalities 
    \eqref{holder alpha less than 1} with $a = 1$ (the unit of $\M$)
    in the proof of \ref{variational expression1}.
    We have
    \begin{equation*}
        \Q{\psi}{\varphi}{\alpha}{z} 
        \le \|h_{\psi}^{\alpha/2z}\|_{2z/\alpha}^{2z} 
        \| h_{\varphi}^{(1-\alpha)/2z} \|_{2z/(1-\alpha)}^{2z} 
        = (\tr(h_{\psi}))^{\alpha} (\tr(h_{\varphi}))^{1-\alpha}
        = \psi(1)^{\alpha}\varphi(1)^{1-\alpha}.
    \end{equation*}
    We next assume that $(1/\psi(1))\psi = (1/\varphi(1))\varphi$ holds.
    We put $\hat{\psi} := (1/\psi(1))\psi$ and 
    $\hat{\varphi} := (1/\varphi(1))\varphi$.
    By \ref{scaling property1} and
    $\Q{\psi}{\psi}{\alpha}{z} = \psi(1)$, 
    we obtain
    \begin{equation*}
        \Q{\psi}{\varphi}{\alpha}{z}
        = \Q{\psi(1)\hat{\psi}}{\varphi(1)\hat{\varphi}}{\alpha}{z}
        = \psi(1)^\alpha \varphi(1)^{1-\alpha} 
        \Q{\hat{\psi}}{\hat{\varphi}}{\alpha}{z}
        = \psi(1)^\alpha \varphi(1)^{1-\alpha}.
    \end{equation*}
    The last equality is due to the assumption that 
    $\hat{\psi} = \hat{\varphi}$ and $\hat{\psi}(1) = 1$.
    Conversely, 
    we assume that 
    $\Q{\psi}{\varphi}{\alpha}{z} 
    = \psi(1)^{\alpha}\varphi(1)^{1-\alpha}$
    and $z \ge 1/2$.
    Then, we rewrite this as 
    $\|h_{\psi}^{\alpha/2z} h_{\varphi}^{(1-\alpha)/2z} \|_{2z}^{2z}
    = \|h_{\psi}^{\alpha/2z}\|_{2z/\alpha}^{2z} 
    \| h_{\varphi}^{(1-\alpha)/2z} \|_{2z/(1-\alpha)}^{2z}$.
    Hence, we have $h_{\psi} = \lambda h_{\varphi}$ for some $\lambda > 0$ 
    by Theorem \ref{equality Holder thm} in \ref{Appendix Holder}. 
    Therefore, $(1/\psi(1))\psi = (1/\varphi(1))\varphi$ holds.
    \par
    \ref{DPI1} 
    Assume that $z \ge \max \{\alpha, 1-\alpha\}$.
    If $\psi = 0$ or $\varphi = 0$, 
    then $\Q{\psi}{\varphi}{\alpha}{z} = 0$
    and inequality \eqref{DPI ineq} clearly holds.
    Therefore, we may assume that $\psi$, $\varphi \ne 0$.
    For each $\varepsilon>0$, 
    we may and do assume that $\psi + \varepsilon\varphi$, 
    $\varphi + \varepsilon \psi$ are faithful.
    In addition, 
    we consider $\gamma_{\omega, \delta} \colon \N \longrightarrow \M$ for 
    a faithful normal state $\omega \in \N_*^+$ and $\delta \in (0, 1)$
    (n.b., $\N$ is $\sigma$-finite, and hence such an $\omega$ exists) 
    such that
    \begin{equation*}
        \gamma_{\omega, \delta}(b) 
        = (1-\delta)\gamma(b) + \delta \omega(b) 1_{\M}.
    \end{equation*}
    It is immediately seen that $\gamma_{\omega, \delta}$ 
    is a unital positive linear map
    and $\varphi \circ \gamma_{\omega, \delta}$ is faithful 
    for any $\varphi \ne 0 \in \M_*^+$.
    Because of this, 
    we can use inequality \eqref{appendix inequality} in \ref{Appendix}
    with the faithful positive linear functionals 
    $\psi+\varepsilon\varphi$, $\varphi+\varepsilon\psi$, 
    $(\psi+\varepsilon\varphi)\circ \gamma_{\omega, \delta}$, 
    $(\varphi+\varepsilon\psi)\circ \gamma_{\omega, \delta}$
    with $p = z/\alpha$, $z/ (1 -\alpha) \ge 1$. 
    We put $\psi_{\varepsilon}:= \psi+\varepsilon\varphi$
    and $\varphi_{\varepsilon}:= \varphi+\varepsilon\psi$.
    By Lemma \ref{trace equality} and inequality \eqref{appendix inequality} 
    with $\psi_{\varepsilon}$, 
    we have, for any $b \in \N_{+}$,
    \begin{align} 
        \tr \left( (b^{1/2} h_{\psi_{\varepsilon}\circ \gamma_{\omega, \delta}}^{\alpha/z} 
        b^{1/2})^{z/\alpha} \right)
        &= \| h_{\psi_{\varepsilon}\circ \gamma_{\omega, \delta}}^{\alpha/2z}
        \, b \,  h_{\psi_{\varepsilon}\circ \gamma_{\omega, \delta}}^{\alpha/2z}\|_{z/\alpha}^{z/\alpha} \notag \\
        &\ge \|h_{\psi_{\varepsilon}}^{\alpha/2z} \gamma_{\omega, \delta}(b) h_{\psi_{\varepsilon}}^{\alpha/2z} \|_{z/\alpha}^{z/\alpha} \notag \\
        &= \tr\left( (\gamma_{\omega, \delta}(b)^{1/2} h_{\psi_{\varepsilon}}^{\alpha/z} 
        \gamma_{\omega, \delta}(b)^{1/2})^{z/\alpha} \right). \label{DPI ineq 1}
    \end{align}
    Similarly, Lemma \ref{trace equality}, 
    inequality \eqref{appendix inequality} with $\varphi_{\varepsilon}$ and 
    Choi's inequality (see \cite[Corollary 2.3]{Choi74})
    imply that
    for any $b \in \N_{++}$, we have
    \begin{align}
        \tr \left( (b^{-1/2} h_{\varphi_{\varepsilon}\circ \gamma_{\omega, \delta}}^{(1 -\alpha)/z} 
        b^{-1/2})^{z/(1-\alpha)} \right)
        &= \| h_{\varphi_{\varepsilon}\circ \gamma_{\omega, \delta}}^{(1-\alpha)/2z}
        b^{-1}  h_{\varphi_{\varepsilon}\circ \gamma_{\omega, \delta}}^{(1-\alpha)/2z}\|_{z/(1-\alpha)}^{z/(1-\alpha)} \notag \\
        &\ge \|h_{\varphi_{\varepsilon}}^{(1-\alpha)/2z} \gamma_{\omega, \delta}(b^{-1}) h_{\varphi_{\varepsilon}}^{(1-\alpha)/2z} \|_{z/(1-\alpha)}^{z/(1-\alpha)} \notag \\
        &\ge \|h_{\varphi_{\varepsilon}}^{(1-\alpha)/2z} \gamma_{\omega, \delta}(b)^{-1} h_{\varphi_{\varepsilon}}^{(1-\alpha)/2z} \|_{z/(1-\alpha)}^{z/(1-\alpha)} \notag \\
        &= \tr\left( (\gamma_{\omega, \delta}(b)^{-1/2} h_{\varphi_{\varepsilon}}^{(1-\alpha)/z} 
        \gamma_{\omega, \delta}(b)^{-1/2})^{z/(1-\alpha)} \right). \label{DPI ineq 2}
    \end{align}
    Combining \eqref{DPI ineq 1}, \eqref{DPI ineq 2} and 
    the variational expression \ref{variational expression1}, 
    we have
    \begin{align*}
        &\Q{(\psi + \varepsilon\varphi)\circ \gamma_{\omega, \delta}}{(\varphi + \varepsilon\psi)\circ \gamma_{\omega, \delta}}{\alpha}{z}
        = \Q{\psi_{\varepsilon}\circ \gamma_{\omega, \delta}}{\varphi_{\varepsilon}\circ \gamma_{\omega, \delta}}{\alpha}{z} \\
        &\qquad= \inf_{b \in \N_{++}}\left\{ 
        \alpha \tr \left( (b^{1/2} h_{\psi_{\varepsilon}\circ \gamma_{\omega, \delta}}^{\alpha/z} b^{1/2})^{z/\alpha} \right)
        + (1 - \alpha) \tr \left( (a^{-1/2} h_{\varphi_{\varepsilon}\circ \gamma_{\omega, \delta}}^{(1-\alpha)/z} a^{-1/2})^{z/(1-\alpha)} \right)
        \right\} \\
        &\qquad\ge \inf_{b \in \N_{++}}\left\{ 
        \alpha \tr\left( (\gamma_{\omega, \delta}(b)^{1/2} h_{\psi_{\varepsilon}}^{\alpha/z} 
        \gamma_{\omega, \delta}(b)^{1/2})^{z/\alpha} \right) \right. \\
        &\qquad\qquad\qquad\qquad \left.
        + (1-\alpha) \tr\left( (\gamma_{\omega, \delta}(b)^{-1/2} h_{\varphi_{\varepsilon}}^{(1-\alpha)/z} 
        \gamma_{\omega, \delta}(b)^{-1/2})^{z/(1-\alpha)} \right) 
        \right\} \\
        &\qquad\ge \Q{\psi_{\varepsilon}}{\varphi_{\varepsilon}}{\alpha}{z}
        = \Q{\psi + \varepsilon\varphi}{\varphi+\varepsilon\psi}{\alpha}{z}.
    \end{align*}
    By the joint continuity \ref{jointly continuity}, 
    we obtain $\Q{\psi\circ\gamma}{\varphi\circ\gamma}{\alpha}{z} 
    \ge \Q{\psi}{\varphi}{\alpha}{z}$ 
    as $\delta \searrow 0$, $\varepsilon \searrow 0$.
    \par
    \ref{jointly convexity1}
    We can get \ref{jointly convexity1} as a collorary of \ref{DPI1}.
    In fact, we consider a unital positive map 
    $\gamma\colon \M \longrightarrow \M \oplus \M$ 
    given by $\gamma(a) := a \oplus a$ 
    and $\psi := \lambda \psi_1 \oplus (1-\lambda)\psi_2$,
    $\varphi := \lambda \varphi_1 \oplus (1-\lambda)\varphi_2 
    \in (\M \oplus \M)_*^+$.
    Then, we have $\psi \circ \gamma = \lambda \psi_1 + (1-\lambda)\psi_2$,
    $\varphi \circ \gamma= \lambda \varphi_1 + (1-\lambda)\varphi_2 
    \in \M_*^+$.
    By the DPI \ref{DPI1} with this map $\gamma$, the additivity under direct sum
    \ref{generalized mean1} and the homogeneity \ref{scaling property1}, 
    we have the joint concavity of $\Q{\psi}{\varphi}{\alpha}{z}$.
    The second assertion follows from that $(1/(\alpha -1))\log$ 
    is a monotone decreasing convex function.
    \par
    \ref{monotonicity1}
    We will use the ALT (Araki--Lieb--Thirring) inequality 
    in the setting of Haagerup non-commutative $L^p$-spaces \cite{Kosaki92}
    due to Kosaki.
    This method is the same as \cite[Remarks 3.18(1)]{Hiai21}. 
    Assume that $0 < z \le z'$. 
    We firstly observe that $\Q{\psi}{\varphi}{\alpha}{z}$ and
    $\Q{\psi}{\varphi}{\alpha}{z'}$ can be
    written as 
    \begin{equation*}
        \Q{\psi}{\varphi}{\alpha}{z}
        = \| h_{\psi}^{\alpha/2z} h_{\varphi}^{(1-\alpha)/2z} \|_{2z}^{2z}, 
        \qquad\Q{\psi}{\varphi}{\alpha}{z'}
        = \| |h_{\psi}^{\alpha/2z'} h_{\varphi}^{(1-\alpha)/2z'}|^{z'/z} \|_{2z}^{2z}.
    \end{equation*}
    By the ALT inequality \cite[Theorem 4]{Kosaki92} 
    (the case that $r = z'/z$, $p_1 = 2z/\alpha$ and $p_2 = 2z/(1-\alpha)$), 
    we obtain
    \begin{equation*}
        \| |h_{\psi}^{\alpha/2z'} h_{\varphi}^{(1-\alpha)/2z'}|^{z'/z} \|_{2z}
        \le \| h_{\psi}^{\alpha/2z} h_{\varphi}^{(1-\alpha)/2z} \|_{2z},
    \end{equation*}
    which yields the desired inequality for $\Q{\psi}{\varphi}{\alpha}{z}$.
\end{proof}
\begin{Rmk}
    We need only the positivity of $\gamma$ to prove the DPI 
    when $0 < \alpha <1$.
    Since $z = 1$ is included in $z \ge \max\{\alpha, 1-\alpha\}$, 
    we show the DPI for the R\'{e}nyi divergence with 
    unital normal positive map $\gamma$.
    (However, the previous results require that 
    $\gamma$ is a Schwarz map to prove the DPI 
    for the R\'{e}nyi divergence, cf.~\cite[Theorem 3.2(8)]{Hiai21}.)
\end{Rmk}
\begin{Rmk} \label{rmk of monotonicity}
    As a corollary of \ref{monotonicity1}, 
    we obtain a relation between the R\'{e}nyi divergence 
    $\renyiD{\psi}{\varphi}{\alpha}$
    and the sandwiched R\'{e}nyi divergence $\sandD{\psi}{\varphi}{\alpha}$.
    By Lemma \ref{alpha 1 renyi} and Remark \ref{alpha alpha sandwich}, 
    we have, for $0 < \alpha < 1$, 
    \begin{equation} \label{renyi sandwich renyi}
        \sandD{\psi}{\varphi}{\alpha} 
        = \D{\psi}{\varphi}{\alpha}{\alpha}
        \le \D{\psi}{\varphi}{\alpha}{1}
        = \renyiD{\psi}{\varphi}{\alpha}.
    \end{equation}
    If Question \ref{question monotonicity} in subsection \ref{questions} 
    was settled,
    then inequality \eqref{renyi sandwich renyi} would hold
    for any $\alpha \in (0, \infty)\setminus\{1\}$.
    Inequality \eqref{renyi sandwich renyi} was proved for $\alpha \ge 1/2$
    in \cite[Theorem 12]{BST18}, \cite[Corollary 3.6]{Jencova18}, 
    \cite[Theorem 3.3]{Jencova21}, see also \cite[Theorem 3.16(6)]{Hiai21}.
\end{Rmk}
%%%%%%%%%%%%%%%%
%%%%%%%%%%%%%%%%%
%%%%%%%%%%%%%%%%%
\subsection{Properties for \texorpdfstring{$\alpha > 1$}{alpha > 1}}
In this section, we will prove some properties of 
the $\alpha$-$z$-R\'{e}nyi divergence 
for $\alpha > 1$.
The numbering of properties are the same as in Theorem \ref{Thm1}.
%%%%%%%%%%%%%%%%%%%%
%%%%%%%%%%%%%%%%%%%
\begin{Thm}\label{Thm2}
    Let $\psi$, $\varphi$, $\psi_1$, $\psi_2$, $\varphi_1$, 
    $\varphi_2 \in \M_*^+$, 
    $\alpha > 1$ and $z > 0$. 
    Then we have following properties:
    \begin{enumerate}[label=\rm{(\roman*)}]
        \item \label{scaling property2}
        (\emph{Scaling property} or \emph{Homogeneity}) 
        For $\lambda$, $\mu \ge 0$, 
        \begin{align} 
            &\Q{\lambda\psi}{\mu\varphi}{\alpha}{z}
            = \lambda^{\alpha}\mu^{1-\alpha}\Q{\psi}{\varphi}{\alpha}{z}, 
            \label{scaling Q2} \\ 
            &\D{\lambda\psi}{\mu\varphi}{\alpha}{z}
            = \D{\psi}{\varphi}{\alpha}{z} + \log\lambda - \log \mu. 
            \label{scaling D2}
        \end{align}
        except when $\mu = 0$ and $\lambda\psi \ne 0$ for the first equality.\footnote{
        In this case, $\text{LHS of \eqref{scaling Q2}}  = \infty$ and 
        $\text{RHS of \eqref{scaling Q2}}  = 0$ 
        if we use the convention $0 \cdot \infty = 0$.
        However, both the sides of \eqref{scaling D2}
        equal $\infty$ when $\mu = 0$. \label{footnote scaling}}
        The homogeneity $\Q{\lambda\psi}{\lambda\varphi}{\alpha}{z}
        = \lambda\Q{\psi}{\varphi}{\alpha}{z}$ holds 
        for $\lambda \ge 0$.
        \item \label{generalized mean2}
        (\emph{Generalized mean property} or \emph{Additivity under direct sum})
        \begin{equation*} 
            \Q{\psi_1 \oplus \psi_2}{\varphi_1 \oplus \varphi_2}{\alpha}{z}
            = \Q{\psi_1}{\varphi_1}{\alpha}{z} 
            + \Q{\psi_2}{\varphi_2}{\alpha}{z}.
        \end{equation*}
        In other words, 
        the continuous strictly increasing function 
        $g(t) = \exp((\alpha-1)t)$ enjoys
        \begin{equation*}
            (\psi_1(1) + \psi_2(1)) 
            g(\D{\psi_1 \oplus \psi_2}{\varphi_1 \oplus \varphi_2}{\alpha}{z})
            = \psi_1(1) g(\D{\psi_1}{\varphi_1}{\alpha}{z}) 
            + \psi_2(1)g(\D{\psi_2}{\varphi_2}{\alpha}{z}).
        \end{equation*}
        \item \label{order relation2}
        (\emph{Order relations})
        If $\psi_1 \le \psi_2$ and $z \ge \alpha$, then
        $\Q{\psi_1}{\varphi}{\alpha}{z} \le \Q{\psi_2}{\varphi}{\alpha}{z}$,
        and if $\varphi_1 \le \varphi_2$ and $z \ge \alpha - 1$, then
        $\Q{\psi}{\varphi_1}{\alpha}{z} \ge \Q{\psi}{\varphi_2}{\alpha}{z}$.
        Hence, if $z \ge \alpha$, $\psi_1 \le \psi_2$ 
        and $\varphi_1 \ge \varphi_2$, 
        then $\Q{\psi_1}{\varphi_1}{\alpha}{z} \le \Q{\psi_2}{\varphi_2}{\alpha}{z}$,
        and if $z \ge \alpha - 1$ and $\varphi_1 \le \varphi_2$, 
        then $\D{\psi}{\varphi_1}{\alpha}{z} \ge \D{\psi}{\varphi_2}{\alpha}{z}$.
        \item \label{jointly lower semicontinuity}
        (\emph{Jointly lower semi-continuity}) 
        If $z \ge \alpha/2$, then
        the map $(\psi, \varphi) \in \M_*^+ \times \M_*^+
        \longmapsto \Q{\psi}{\varphi}{\alpha}{z}$ 
        is jointly lower semi-continuous in the norm topology.
        Hence, if $z \ge \alpha/2$, then, 
        the map $(\psi, \varphi) \in (\M_*^+ \setminus \{0\}) \times \M_*^+
        \longmapsto \D{\psi}{\varphi}{\alpha}{z}$
        is jointly lower semi-continuous in the norm topology too. 
        \item \label{limit of epsilon2}
        If $z \ge \max \{\alpha -1, \alpha/2 \}$, then
        \begin{equation*} 
            \Q{\psi}{\varphi}{\alpha}{z}        
            = \lim_{\varepsilon \searrow 0} \Q{\psi}{\varphi + \varepsilon \psi}{\alpha}{z}
            = \sup_{\varepsilon > 0} \Q{\psi}{\varphi + \varepsilon \psi}{\alpha}{z}.
        \end{equation*}
        \item \label{variational expression2}
        (Variational lower estimate)
        \begin{equation} \label{variational expression for alpha > 1 ineq}
            \Q{\psi}{\varphi}{\alpha}{z}
            \ge \sup_{a \in \M_{+}}\left\{ 
            \alpha \tr \left( (a^{1/2} h_{\psi}^{\alpha/z} a^{1/2})^{z/\alpha} \right)
            - (\alpha - 1) \tr \left( (a^{1/2} h_{\varphi}^{(\alpha-1)/z} a^{1/2})^{z/(\alpha-1)} \right)
            \right\}.
        \end{equation}
    \item \label{strict positivity2}
    (\emph{Strict positivity})
    Assume that $\psi$, $\varphi \ne 0$.
    \begin{equation} \label{strict positivity ineq2}
        \Q{\psi}{\varphi}{\alpha}{z} 
        \ge \psi(1)^{\alpha}\varphi(1)^{1-\alpha}, 
        \qquad\D{\psi}{\varphi}{\alpha}{z} \ge \log\frac{\psi(1)}{\varphi(1)}.
    \end{equation}
    Moreover, the above inequalities become equalities 
    if $(1/\psi(1))\psi = (1/\varphi(1))\varphi$. 
    Also, if $z = 1$ or $z \ge \alpha/2$,
    then the above inequalities become equalities 
    only if $(1/\psi(1))\psi = (1/\varphi(1))\varphi$.
    In particular, if $\psi(1) = \varphi(1) > 0$, then 
    $\D{\psi}{\varphi}{\alpha}{z} \ge 0$,
    and if $z = 1$ or $z \ge \alpha/2$,
    then equality holds
    if and only if
    $\psi = \varphi$.
    \end{enumerate}
\end{Thm}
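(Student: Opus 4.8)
The plan is to push every item through the defining identity \eqref{alpha z identity prime}. By Lemma \ref{identity lemma}, $\Q{\psi}{\varphi}{\alpha}{z}=\|y\|_{2z}^{2z}$ whenever there exists $y\in L^{2z}(\M)s(\varphi)$ with $h_\psi^{\alpha/2z}=y\,h_\varphi^{(\alpha-1)/2z}$, and $\Q{\psi}{\varphi}{\alpha}{z}=\infty$ otherwise; moreover if such an identity holds with merely $y\in L^{2z}(\M)$, replacing $y$ by $y\,s(\varphi)$ preserves it without increasing the norm, so the support condition is never a real restriction. Granting this: (i) follows because $h_{\lambda\psi}=\lambda h_\psi$ and $h_{\mu\varphi}=\mu h_\varphi$ turn a solution $y$ for $(\psi,\varphi)$ into the solution $\lambda^{\alpha/2z}\mu^{-(\alpha-1)/2z}y$ for $(\lambda\psi,\mu\varphi)$ when $\lambda,\mu>0$; taking $2z$-th powers of the norm gives \eqref{scaling Q2}, the cases $\lambda=0$ or $\mu=0$ are checked by hand using Remark \ref{basical remark of alpha z renyi}\ref{basical remark of alpha z renyi 1} and the stated conventions (this is exactly where the footnote exception appears), and \eqref{scaling D2} is then formal. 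For (ii), under the identification $L^p(\M\oplus\M)\simeq L^p(\M)\times L^p(\M)$ (\cite[Remark 30]{Terp81}, valid for all $p>0$) functional calculus and support projections split as direct sums, so \eqref{alpha z identity prime} for $(\psi_1\oplus\psi_2,\varphi_1\oplus\varphi_2)$ is equivalent to the pair of componentwise identities and $\|y_1\oplus y_2\|_{2z}^{2z}=\|y_1\|_{2z}^{2z}+\|y_2\|_{2z}^{2z}$; the ``$\infty$'' branches are consistent as well.

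For (iii), first assertion, we may assume $\Q{\psi_2}{\varphi}{\alpha}{z}<\infty$, so $h_{\psi_2}^{\alpha/2z}=y_2 h_\varphi^{(\alpha-1)/2z}$; since $z\ge\alpha$ we have $h_{\psi_1}^{\alpha/z}\le h_{\psi_2}^{\alpha/z}$, i.e.\ $(h_{\psi_1}^{\alpha/2z})^*h_{\psi_1}^{\alpha/2z}\le(h_{\psi_2}^{\alpha/2z})^*h_{\psi_2}^{\alpha/2z}$, and a Douglas-type factorization lemma in the Haagerup $L^p$-setting (cf.~\cite[Lemma A.58]{Hiai21}) produces a contraction $c\in\M$ with $h_{\psi_1}^{\alpha/2z}=c\,h_{\psi_2}^{\alpha/2z}=(cy_2)h_\varphi^{(\alpha-1)/2z}$, whence $\Q{\psi_1}{\varphi}{\alpha}{z}\le\|cy_2\|_{2z}^{2z}\le\|y_2\|_{2z}^{2z}=\Q{\psi_2}{\varphi}{\alpha}{z}$. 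For the second assertion, $z\ge\alpha-1$ gives $h_{\varphi_1}^{(\alpha-1)/z}\le h_{\varphi_2}^{(\alpha-1)/z}$, hence a contraction $d\in\M$ with $h_{\varphi_1}^{(\alpha-1)/2z}=d\,h_{\varphi_2}^{(\alpha-1)/2z}$, so from $h_\psi^{\alpha/2z}=y_1 h_{\varphi_1}^{(\alpha-1)/2z}=(y_1 d)h_{\varphi_2}^{(\alpha-1)/2z}$ we get $\Q{\psi}{\varphi_2}{\alpha}{z}\le\|y_1 d\|_{2z}^{2z}\le\Q{\psi}{\varphi_1}{\alpha}{z}$ (note the reversed inequality). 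The combined order relations and the statements for $\D{\psi}{\varphi}{\alpha}{z}$ follow at once, using that $t\mapsto\frac1{\alpha-1}\log(t/\psi(1))$ is increasing for $\alpha>1$.

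For (vi) assume $\Q{\psi}{\varphi}{\alpha}{z}<\infty$ (otherwise trivial), so $h_\psi^{\alpha/2z}=y\,h_\varphi^{(\alpha-1)/2z}$ with $\Q{\psi}{\varphi}{\alpha}{z}=\|y\|_{2z}^{2z}$. Fix $a\in\M_+$, put $b:=h_\varphi^{(\alpha-1)/2z}a^{1/2}$, and set $u:=\tr((a^{1/2}h_\psi^{\alpha/z}a^{1/2})^{z/\alpha})^{1/2z}$, $v:=\tr((a^{1/2}h_\varphi^{(\alpha-1)/z}a^{1/2})^{z/(\alpha-1)})^{1/2z}$, $w:=\|y\|_{2z}$; using $\|c^*c\|_p=\|c\|_{2p}^2$ one reads off $u^\alpha=\|h_\psi^{\alpha/2z}a^{1/2}\|_{2z/\alpha}=\|yb\|_{2z/\alpha}$ and $v^{\alpha-1}=\|b\|_{2z/(\alpha-1)}$, so H\"older's inequality gives $u^\alpha\le w\,v^{\alpha-1}$. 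Raising this to the power $2z/\alpha$ and applying the weighted arithmetic--geometric mean inequality with weights $1/\alpha$ and $(\alpha-1)/\alpha$ yields
\begin{equation*}
\alpha\, u^{2z}\le w^{2z}+(\alpha-1)\, v^{2z},
\end{equation*}
which is \eqref{variational expression for alpha > 1 ineq} evaluated at $a$; take the supremum over $a\in\M_+$. For (vii), specializing (vi) to $a=\lambda 1$ ($\lambda>0$) gives $\Q{\psi}{\varphi}{\alpha}{z}\ge\alpha\lambda^{z/\alpha}\psi(1)-(\alpha-1)\lambda^{z/(\alpha-1)}\varphi(1)$, and maximizing over $\lambda$ (the maximum is at $\lambda=(\psi(1)/\varphi(1))^{\alpha(\alpha-1)/z}$ and equals $\psi(1)^\alpha\varphi(1)^{1-\alpha}$) gives \eqref{strict positivity ineq2}. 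If $(1/\psi(1))\psi=(1/\varphi(1))\varphi=:\rho$, then (i) together with $\Q{\rho}{\rho}{\alpha}{z}=\rho(1)=1$ (take $y=h_\rho^{1/2z}$) yields equality. Conversely, if equality holds and $z\ge\alpha/2$, then by (i) we may take $\psi,\varphi$ to be states with $\Q{\psi}{\varphi}{\alpha}{z}=1$; the H\"older estimate $\|h_\psi^{\alpha/2z}\|_{2z/\alpha}=\|y\,h_\varphi^{(\alpha-1)/2z}\|_{2z/\alpha}\le\|y\|_{2z}\|h_\varphi^{(\alpha-1)/2z}\|_{2z/(\alpha-1)}$ is then saturated (both sides equal $1$), so the equality condition for H\"older's inequality (Theorem \ref{equality Holder thm} in \ref{Appendix Holder}) forces $h_\psi$ and $h_\varphi$ to be proportional, i.e.\ $\psi=\varphi$; for $z=1$ the same argument works with Lemma \ref{alpha 1 renyi} in place of \eqref{alpha z identity prime} (or one quotes the known equality characterization of the R\'enyi divergence). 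The final ``in particular'' is immediate.

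For (iv), suppose $\psi_n\to\psi$, $\varphi_n\to\varphi$ in $\M_*^+$; passing to a subsequence we may assume $\Q{\psi_n}{\varphi_n}{\alpha}{z}\to L:=\liminf_n\Q{\psi_n}{\varphi_n}{\alpha}{z}$, with $L<\infty$, so for large $n$ there is $y_n$ with $h_{\psi_n}^{\alpha/2z}=y_n h_{\varphi_n}^{(\alpha-1)/2z}$ and $\|y_n\|_{2z}^{2z}=\Q{\psi_n}{\varphi_n}{\alpha}{z}$ bounded. Since $\alpha>1$ forces $2z>1$, $L^{2z}(\M)$ is reflexive, so a further subsequence has $y_n\rightharpoonup y$ weakly with $\|y\|_{2z}^{2z}\le L$. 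By Lemma \ref{norm continuity}, $h_{\psi_n}^{\alpha/2z}\to h_\psi^{\alpha/2z}$ in $L^{2z/\alpha}(\M)$ and $h_{\varphi_n}^{(\alpha-1)/2z}\to h_\varphi^{(\alpha-1)/2z}$ in $L^{2z/(\alpha-1)}(\M)$ in norm; a routine H\"older index count shows that multiplication $L^{2z}(\M)\times L^{2z/(\alpha-1)}(\M)\to L^{2z/\alpha}(\M)$ is continuous for the weak topology on the first factor and the norm topology on the second, whence $y_n h_{\varphi_n}^{(\alpha-1)/2z}\rightharpoonup y\,h_\varphi^{(\alpha-1)/2z}$, while the left-hand sides converge weakly to $h_\psi^{\alpha/2z}$. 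Thus $h_\psi^{\alpha/2z}=y\,h_\varphi^{(\alpha-1)/2z}$, and (after replacing $y$ by $y\,s(\varphi)$) $\Q{\psi}{\varphi}{\alpha}{z}\le\|y\|_{2z}^{2z}\le L$, which is the claimed lower semi-continuity; the statement for $\D{\psi}{\varphi}{\alpha}{z}$ follows from it. For (v), (iii) (second assertion, $z\ge\alpha-1$) shows $\varepsilon\mapsto\Q{\psi}{\varphi+\varepsilon\psi}{\alpha}{z}$ is non-decreasing as $\varepsilon\searrow 0$ and bounded above by $\Q{\psi}{\varphi}{\alpha}{z}$, so $\sup_{\varepsilon>0}=\lim_{\varepsilon\searrow 0}\le\Q{\psi}{\varphi}{\alpha}{z}$, while (iv) ($z\ge\alpha/2$) applied to $\varphi+\varepsilon\psi\to\varphi$ gives the reverse; equality hence requires $z\ge\max\{\alpha-1,\alpha/2\}$. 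The least routine point above is (iv): one must extract a weak limit of the ``non-commutative Radon--Nikodym'' factors $y_n$ and verify that \eqref{alpha z identity prime} persists in the limit, which is precisely where reflexivity of $L^{2z}(\M)$ (guaranteed by $\alpha>1$) and the weak$\times$norm continuity of multiplication are used; the secondary delicate point is the equality analysis in (vii), which rests essentially on the H\"older equality condition of \ref{Appendix Holder}.
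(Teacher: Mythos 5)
Your proof is correct and follows essentially the same route as the paper for every item: reduction to the factorization \eqref{alpha z identity prime} plus Douglas-type contractions for \ref{scaling property2}--\ref{order relation2}, weak compactness of bounded sets in the reflexive space $L^{2z}(\M)$ together with weak$\times$norm continuity of multiplication for \ref{jointly lower semicontinuity}, the H\"older--Young chain for \ref{variational expression2}, and the H\"older equality condition of \ref{Appendix Holder} for \ref{strict positivity2}. The only cosmetic deviation is that you obtain \eqref{strict positivity ineq2} by optimizing the variational lower bound over $a=\lambda 1$, whereas the paper reads it off directly from the H\"older estimate \eqref{holder alpha greater than 1} at $a=1$; both rest on the same inequality.
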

\begin{proof}
    \ref{scaling property2} 
    If $\lambda = 0$ or ($\mu = 0 \wedge \lambda\psi = 0)$, 
    then both the sides of \eqref{scaling Q2} equal $0$. 
    We thus consider the case that $\lambda$, $\mu > 0$.
    If $\Q{\psi}{\varphi}{\alpha}{z} < \infty$, 
    i.e., if there exists $x \in s(\varphi) L^z(M) s(\varphi)$ 
    such that identity \eqref{alpha z identity} holds with 
    $\psi$ and $\varphi$,
    then $x' = \lambda^{\alpha/z}\mu^{(1-\alpha)/z} x \in s(\varphi) L^z(M) s(\varphi)$
    satisfies $(\lambda h_{\psi})^{\alpha/z} 
    = (\mu h_{\varphi})^{(\alpha-1)/2z} x' 
    (\mu h_{\varphi})^{(\alpha-1)/2z}$.
    Thus, we have
    \begin{equation*}
        \|x'\|_z^z = \|\lambda^{\alpha/z}\mu^{(1-\alpha)/z} x\|_z^z
        = \lambda^{\alpha}\mu^{1-\alpha}\Q{\psi}{\varphi}{\alpha}{z} < \infty.
    \end{equation*}
    Replacing $\lambda$ and $\mu$ 
    by $1/\lambda$ and $1/\mu$, respectively, in the above argument,
    we can similarly show
    that $\Q{\psi}{\varphi}{\alpha}{z} < \infty$ and 
    equality \eqref{scaling Q2} holds
    if $\Q{\lambda\psi}{\mu\varphi}{\alpha}{z} < \infty$.
    Equality \eqref{scaling D2} follows by definition.
    \par
    \ref{generalized mean2}
    We can see, via the identification mentioned in the proof of 
    Theorem \ref{Thm1}\ref{generalized mean1}, that 
    there exists 
    $x \in s(\varphi_1 \oplus \varphi_2) 
    L^z(\M \oplus \M) s(\varphi_1 \oplus \varphi_2)$
    such that identity \eqref{alpha z identity} holds with
    $h_{\psi_1 \oplus \psi_2}$ and
    $h_{\varphi_1 \oplus \varphi_2}$ 
    if and only if 
    there exists
    $x_i \in s(\varphi_i) L^z(\M) s(\varphi_i)$ such
    that identity \eqref{alpha z identity} holds with 
    $h_{\psi_i}$ and
    $h_{\varphi_i}$ for each $i=1, 2$.
    \par
    \ref{order relation2}
    Assume that $\psi_1 \le \psi_2$.
    If $\Q{\psi_2}{\varphi}{\alpha}{z} = \infty$, 
    then the desired inequality clearly holds.
    Thus, we may assume that $\Q{\psi_2}{\varphi}{\alpha}{z} < \infty$ in addition.
    Then there exists $y \in L^{2z}(\M)s(\varphi)$ such that 
    $h_{\psi_2}^{\alpha/2z} = 
    y h_{\varphi}^{(\alpha-1)/2z}$ holds 
    by the definition of $\Q{\psi}{\varphi}{\alpha}{z}$ 
    and Lemma \ref{identity lemma}.
    On the other hand, when $z\ge \alpha$, 
    there exists $a \in \M$ with $\|a\| \le 1$ such that 
    $h_{\psi_1}^{\alpha/2z} = a h_{\psi_2}^{\alpha/2z}$ holds
    since $h_{\psi_1} \le h_{\psi_2}$ by \cite[Lemma A.58]{Hiai21} 
    (or \cite[Lemma A.24]{Hiai21}).
    Hence, there exists $ay \in L^{2z}s(\varphi)$
    such that
    $h_{\psi_1}^{\alpha/2z} = (ay) h_{\varphi}^{(\alpha-1)/2z}$.
    Therefore, by H\"{o}lder's inequality, we have
    $\Q{\psi_1}{\varphi}{\alpha}{z} < \infty$ and 
    $\Q{\psi_1}{\varphi}{\alpha}{z} = \|ay\|_{2z}^{2z} 
    \le \|a\|^{2z}\|y\|_{2z}^{2z} \le \Q{\psi_2}{\varphi}{\alpha}{z}$.
    \par
    We then assume that $\varphi_1 \le \varphi_2$ 
    and $\Q{\psi}{\varphi_1}{\alpha}{z} < \infty$.
    Then there exists $y \in L^{2z}(\M)s(\varphi_1)$ such that 
    $h_{\psi}^{\alpha/2z} = 
    y h_{\varphi_1}^{(\alpha-1)/2z}$ holds.
    Moreover, when $z \ge \alpha - 1$, 
    there exists $b \in s(\varphi_2)\M s(\varphi_2)$ with $\|b\| \le 1$
    such that $h_{\varphi_1}^{(\alpha-1)/2z} = b h_{\varphi_2}^{(\alpha-1)/2z}$.
    Thus, $h_{\psi}^{\alpha/2z} = yb h_{\varphi_2}^{(\alpha-1)/2z}$ holds.
    As above  
    $\Q{\psi}{\varphi_2}{\alpha}{z} = \|yb\|_{2z}^{2z} 
    \le \|y\|_{2z}^{2z}\|b\|^{2z} \le \Q{\psi}{\varphi_1}{\alpha}{z}$
    by H\"{o}lder's inequality.
    \par
    \ref{jointly lower semicontinuity}
    The proof is the same as \cite[Proposition 3.10]{Jencova18}. 
    We assume that $z \ge \alpha /2$.
    To prove the joint lower semi-continuity of $(\psi, \varphi) 
    \longmapsto \Q{\psi}{\varphi}{\alpha}{z}$ 
    is equivalent to proving that
    the set 
    $L_\lambda := \{ (\psi, \varphi) \in \M_*^+ \times \M_*^+ \ ; \ 
    \Q{\psi}{\varphi}{\alpha}{z} \le \lambda \}$ 
    is closed for every $\lambda \ge 0$. 
    Let $\psi$, $\varphi\in \M_*^+$ and $(\psi_n, \varphi_n) \in L_\lambda$
    with $\|\psi_n - \psi\| \to 0$, $\|\varphi_n - \varphi\| \to 0$ as $n \to \infty$.
    Since $(\psi_n, \varphi_n) \in L_\lambda$, 
    $\Q{\psi_n}{\varphi_n}{\alpha}{z} \le \lambda < \infty$ holds,
    then there exists $y_n \in  L^{2z}(\M) s(\varphi_n)$ such that 
    identity \eqref{alpha z identity prime} holds with $\psi_n$ and $\varphi_n$
    and $\|y_n \|_{2z}= \Q{\psi_n}{\varphi_n}{\alpha}{z}^{1/2z} \le \lambda^{1/2z}$.
    By assumption, $2z \ge \alpha > 1$ holds.
    Hence, $L^{2z}(\M)$ is a reflexive Banach space 
    and $L^{2z/(2z-1)}(\M)$ is the dual space of $L^{2z}(\M)$.
    Thus, there exists $y \in L^{2z}(\M)$ with $\|y\|_{2z} \le \lambda^{1/2z}$
    such that $y_{n_i} \to y$ as $i \to \infty$
    in the weak topology.
    (This fact is well known,
    see \cite[Chapter V, Theorem 4.2 and 13.1]{Conway90} etc.)
    We may and do replace $n_i$ by $n$ and assume that $y_n \to y$ 
    in the weak topology.
    \par
    Let $\beta := 2z/(2z - \alpha) = (1 - \alpha/2z)^{-1}$.
    By the triangle inequality and H\"{o}lder's inequality, 
    for any $a \in L^{\beta}(\M)$, we have
    \begin{align*}
        &|\tr(a (y_n h_{\varphi_n}^{(\alpha-1)/2z} - y h_{\varphi}^{(\alpha-1)/2z}))| \\
        &\qquad\le |\tr(a y_n (h_{\varphi_n}^{(\alpha-1)/2z} - h_{\varphi}^{(\alpha-1)/2z}))|
        + |\tr(a (y_n - y) h_{\varphi}^{(\alpha-1)/2z})| \\
        &\qquad\le \|a\|_{\beta}\| \|y_n\|_{2z} 
        \|h_{\varphi_n}^{(\alpha-1)/2z} - h_{\varphi}^{(\alpha-1)/2z}\|_{2z/(\alpha - 1)}
        + |\tr ((y_n - y) h_{\varphi}^{(\alpha-1)/2z} a)|.
    \end{align*}
    By Lemma \ref{norm continuity}, we have
    $\|h_{\varphi_n}^{(\alpha-1)/2z} 
    - h_{\varphi}^{(\alpha-1)/2z}\|_{2z/(\alpha - 1)} \to 0$ 
    as $n \to \infty$,
    and by $h_{\varphi}^{(\alpha-1)/2z} a \in L^{2z/(2z -1)}(\M)$, 
    we have
    $|\tr ((y_n - y) h_{\varphi}^{(\alpha-1)/2z} a)| \to 0$ as $n \to \infty$. 
    Hence we obtain that $h_{\psi_n}^{\alpha/2z} = y_n h_{\varphi_n}^{(\alpha-1)/2z} 
    \to y h_{\varphi}^{(\alpha-1)/2z}$ as $n \to \infty$ in the weak topology.
    \par
    On the other hand, $h_{\psi_n}^{\alpha/2z} \to h_{\psi}^{\alpha/2z}$ 
    in the norm topology by assumption.
    Hence, $h_{\psi}^{\alpha/2z} = y h_{\varphi}^{(\alpha-1)/2z}$ holds
    and $\|y\|_{2z} \le \lambda^{1/2z}$.
    Therefore, $L_{\lambda}$ is closed.
    \par
    \ref{limit of epsilon2} immediately follows from \ref{order relation1}
    and \ref{jointly lower semicontinuity}.
    \par
    \ref{variational expression2}
    We calculate similarly to the case of $0 < \alpha <1$.
    When $\alpha > 1$, 
    inequality \eqref{variational expression for alpha > 1 ineq} clearly holds 
    if $\Q{\psi}{\varphi}{\alpha}{z} = \infty$.
    So, we may assume that $\Q{\psi}{\varphi}{\alpha}{z} < \infty$,
    that is, there exists $y \in L^{2z}(\M)s(\varphi)$
    such that identity \eqref{alpha z identity prime} holds
    by Lemma \ref{identity lemma}.
    Then, for every $a \in \M_+$, we have
    \begin{align}
        \tr\left( (a^{1/2} h_{\psi}^{\alpha/z} a^{1/2})^{z/\alpha} \right) 
        &= \|h_{\psi}^{\alpha/2z} a^{1/2} \|_{2z/\alpha}^{2z/\alpha} 
        = \|y h_{\varphi}^{(\alpha - 1)/2z} a^{1/2}\|_{2z/\alpha}^{2z/\alpha} \notag \\
        &\le \|y\|_{2z}^{2z/\alpha} 
        \| h_{\varphi}^{(\alpha - 1)/2z} a^{1/2}\|_{2z/(\alpha - 1)}^{2z/\alpha} 
        \label{holder alpha greater than 1}\\ 
        &= \Q{\psi}{\varphi}{\alpha}{z}^{1/\alpha} 
        \left( \tr \left( 
        (a^{1/2} h_{\varphi}^{(\alpha - 1)/z} a^{1/2})^{z/(\alpha - 1)}
        \right) \right)^{(\alpha - 1)/\alpha} \notag\\
        &\le \frac{1}{\alpha} \Q{\psi}{\varphi}{\alpha}{z}
        + \frac{\alpha - 1}{\alpha} \tr \left( (a^{1/2} h_{\varphi}^{(\alpha - 1)/z} a^{1/2})^{z/(\alpha - 1)} \right), \notag
    \end{align}
    where the second equality is due to identity \eqref{alpha z identity prime}. 
    \par
    \ref{strict positivity2}
    We use inequalities \eqref{holder alpha greater than 1}.
    We put $a = 1$ 
    for \eqref{holder alpha greater than 1}, 
    and then we have
    \begin{equation*}
        \tr(h_{\psi}) =
        \| h_{\psi}^{\alpha/2z} \|_{2z/\alpha}^{2z/\alpha}
        \le \|y\|_{2z}^{2z/\alpha} 
        \| h_{\varphi}^{(\alpha - 1)/2z} \|_{2z/(\alpha - 1)}^{2z/\alpha} 
        = \Q{\psi}{\varphi}{\alpha}{z}^{1/\alpha} (\tr(h_{\varphi}))^{(\alpha - 1)/\alpha}, 
    \end{equation*}
    which yields $\Q{\psi}{\varphi}{\alpha}{z} 
    \ge \psi(1)^{\alpha}\varphi(1)^{1-\alpha}$.
    In the same way as the case of $0 < \alpha < 1$
    we can prove that 
    if $(1/\psi(1))\psi = (1/\varphi(1))\varphi$, then 
    inequalities \eqref{strict positivity ineq2} become equalities. 
    Conversely, we assume that $\Q{\psi}{\varphi}{\alpha}{z} 
    = \psi(1)^\alpha \varphi(1)^{1-\alpha}$. 
    If $z = 1$,
    then $\Q{\psi}{\varphi}{\alpha}{z}$ becomes 
    the R\'{e}nyi divergence.
    Hence, $(1/\psi(1))\psi = (1/\varphi(1))\varphi$ holds, 
    see \cite[Theorem 3.2(9)]{Hiai21}. 
    We next consider the case of $z \ge \alpha/2$. 
    Since $\Q{\psi}{\varphi}{\alpha}{z} 
    = \psi(1)^{\alpha}\varphi(1)^{1-\alpha} < \infty$, 
    there exists $x \in (s(\varphi)L^{z}(\M)s(\varphi))_+$ such that 
    identity \eqref{alpha z identity} holds. 
    By identity \eqref{alpha z identity} and the assumption,
    we observe that 
    \begin{equation*}
        \|h_{\psi}^{\alpha/z}\|_{z/\alpha}^{z} 
        = \|h_{\varphi}^{(\alpha-1)/2z} x 
        h_{\varphi}^{(\alpha-1)/2z}\|_{z/\alpha}^{z}  
        = \|x^{1/2} h_{\varphi}^{(\alpha-1)/2z}\|_{2z/\alpha}^{2z} 
        = \|x^{1/2}\|_{2z}^{2z}
        \|h_{\varphi}^{(\alpha-1)/2z}\|_{2z/(\alpha-1)}^{2z}. 
    \end{equation*} 
    Hence, we have $x^z = \lambda h_{\varphi}$ for some $\lambda > 0$ by
    Theorem \ref{equality Holder thm} in \ref{Appendix Holder}
    and thus $h_{\psi} = \mu h_{\varphi}$ for some $\mu> 0$.
    Therefore, we have $(1/\psi(1))\psi = (1/\varphi(1))\varphi$.
\end{proof}
As a corollary of \ref{order relation1} in Theorems \ref{Thm1} and \ref{Thm2}, 
we have the order axiom (see \cite[axiom (IV), Proposition 1]{AudenaertDatta15}) 
of $\D{\psi}{\varphi}{\alpha}{z}$.
\begin{Cor} \label{order axiom}
    $\D{\psi}{\varphi}{\alpha}{z}$ satisfies the order axiom 
    when $z \ge |\alpha -1|$.
\end{Cor}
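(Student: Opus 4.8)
The plan is to reduce the statement directly to the antitonicity in the second variable that has already been established, separately, in Theorems \ref{Thm1} and \ref{Thm2}. Recall that the order axiom asks that $\D{\psi}{\varphi}{\alpha}{z}$ be monotone decreasing in $\varphi$, i.e.\ $\varphi_1 \le \varphi_2$ should imply $\D{\psi}{\varphi_1}{\alpha}{z} \ge \D{\psi}{\varphi_2}{\alpha}{z}$. Since $\alpha \ne 1$, I would split into the two regimes $0 < \alpha < 1$ and $\alpha > 1$ and observe that the single hypothesis $z \ge |\alpha - 1|$ specializes, respectively, to $z \ge 1-\alpha$ and to $z \ge \alpha - 1$.

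In the case $0 < \alpha < 1$ one has $|\alpha - 1| = 1-\alpha$, so $z \ge |\alpha-1|$ is precisely the hypothesis $z \ge 1-\alpha$ appearing in the last clause of Theorem \ref{Thm1}\ref{order relation1}; that clause states exactly that $\varphi_1 \le \varphi_2$ then forces $\D{\psi}{\varphi_1}{\alpha}{z} \ge \D{\psi}{\varphi_2}{\alpha}{z}$. In the case $\alpha > 1$ one has $|\alpha - 1| = \alpha - 1$, so $z \ge |\alpha-1|$ is the hypothesis $z \ge \alpha - 1$ of the last clause of Theorem \ref{Thm2}\ref{order relation2}, which gives the same conclusion. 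Combining the two cases yields the order axiom whenever $z \ge |\alpha - 1|$.

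There is essentially no obstacle here: the mathematical content is already packaged inside the order-relation parts of the two theorems, and the corollary is just the observation that the two separate thresholds $1-\alpha$ and $\alpha - 1$ are uniformly described by $|\alpha - 1|$. The only point worth flagging in the write-up is that the sign bookkeeping has already been absorbed into those theorem statements — for $0 < \alpha < 1$ the map $\varphi \mapsto \Q{\psi}{\varphi}{\alpha}{z}$ is increasing while the prefactor $1/(\alpha-1)$ is negative, whereas for $\alpha > 1$ that map is decreasing while $1/(\alpha-1)$ is positive — so invoking the $D$-level conclusions of Theorems \ref{Thm1} and \ref{Thm2} directly is legitimate and nothing further needs to be verified.
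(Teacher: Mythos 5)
The route you take---reducing to the order-relation clauses of Theorems \ref{Thm1}\ref{order relation1} and \ref{Thm2}\ref{order relation2} and noting that the two thresholds $1-\alpha$ and $\alpha-1$ are unified by the single hypothesis $z \ge |\alpha-1|$---is the same as the paper's. The problem is that you have misidentified the statement to be proved. The \emph{order axiom} in the sense of Audenaert--Datta (axiom (IV) of their Proposition 1, which is what the corollary refers to) is the sign condition: $\psi \le \varphi$ implies $\D{\psi}{\varphi}{\alpha}{z} \le 0$, and $\psi \ge \varphi$ implies $\D{\psi}{\varphi}{\alpha}{z} \ge 0$. It is not the assertion that $\varphi \longmapsto \D{\psi}{\varphi}{\alpha}{z}$ is antitone; the latter is verbatim what the theorems already state, so your argument reduces the corollary to a tautology and establishes a different statement (one that is stronger only modulo a normalization you never invoke).

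What is missing is the short step that actually extracts the order axiom from antitonicity: one needs $\Q{\psi}{\psi}{\alpha}{z} = \psi(1)$ (for $\alpha>1$, take $x = h_{\psi}^{1/z}$ in \eqref{alpha z identity}), hence $\D{\psi}{\psi}{\alpha}{z} = 0$, and then one specializes the antitonicity to the pair with $\varphi_1 = \psi$, $\varphi_2 = \varphi$ when $\psi \le \varphi$ (giving $0 = \D{\psi}{\psi}{\alpha}{z} \ge \D{\psi}{\varphi}{\alpha}{z}$) and to $\varphi_1 = \varphi$, $\varphi_2 = \psi$ when $\varphi \le \psi$ (giving $\D{\psi}{\varphi}{\alpha}{z} \ge \D{\psi}{\psi}{\alpha}{z} = 0$). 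This is a two-line fix, and it is exactly how the paper argues, but as written your proof never produces the inequalities $\D{\psi}{\varphi}{\alpha}{z} \le 0$ and $\D{\psi}{\varphi}{\alpha}{z} \ge 0$ that constitute the order axiom.
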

\begin{proof}[Proof of Corollary \ref{order axiom}]
    It is immediately seen that 
    $\Q{\psi}{\psi}{\alpha}{z} = \psi(1)$, 
    so that $\D{\psi}{\psi}{\alpha}{z} = 0$. 
    (This claim is included in \ref{strict positivity1}.)
    Thus, if $\psi \le (\text{resp.} \ge)\  \varphi$, 
    then we have $0 = \D{\psi}{\psi}{\alpha}{z} \ge 
    (\text{resp.} \le) \ \D{\psi}{\varphi}{\alpha}{z}$.
\end{proof}
The $\alpha$-$z$-R\'{e}nyi divergence has the additivity under tensor products 
stated in \cite[Proposition 10]{KatoUeda23} 
when $\alpha < 1$ or both $\Q{\psi_i}{\varphi_i}{\alpha}{z} < \infty$ 
or $z= \alpha \in [1/2, \infty)\setminus\{1\}$. 
We will improve it below. 
\begin{Prop} \label{tensor product alpha z}
    Let $\alpha$, $z > 0$ with $\alpha \ne 1$ 
    and $\psi_i$, $\varphi_i \in \M_*^+$ with $i = 1, 2$.
    If $0 < \alpha < 1$ or both $\Q{\psi_i}{\varphi_i}{\alpha}{z} < \infty$
    or $z \ge \max\{\alpha-1, \alpha/2\}$, 
    then we have
    \begin{align} 
        &\Q{\psi_1 \bar{\otimes} \psi_2}{\varphi_1 \bar{\otimes}\varphi_2}{\alpha}{z}
        = \Q{\psi_1}{\varphi_1}{\alpha}{z} \Q{\psi_2}{\varphi_2}{\alpha}{z}, 
        \label{multiplicativity under tensor product} \\
        &\D{\psi_1 \bar{\otimes} \psi_2}{\varphi_1 \bar{\otimes}\varphi_2}{\alpha}{z}
        = \D{\psi_1}{\varphi_1}{\alpha}{z} + \D{\psi_2}{\varphi_2}{\alpha}{z}.
        \label{additivity under tensor product}
    \end{align}
\end{Prop}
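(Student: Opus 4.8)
The strategy is to reduce everything to bookkeeping with the tensor product structure of Haagerup non-commutative $L^p$-spaces, exactly as in the proof of \cite[Proposition 10]{KatoUeda23} (see also \cite{Terp81}): there is a natural identification under which $L^p(\M)\otimes L^p(\M)$ is total in $L^p(\M\bar\otimes\M)$, $h_{\psi_1\bar\otimes\psi_2}$ corresponds to $h_{\psi_1}\otimes h_{\psi_2}$, $(a\otimes b)^t=a^t\otimes b^t$ for $a,b\ge0$ and $t>0$, $\tr(\xi\otimes\eta)=\tr(\xi)\tr(\eta)$, $\|\xi\otimes\eta\|_p=\|\xi\|_p\|\eta\|_p$, and $s(\varphi_1\bar\otimes\varphi_2)=s(\varphi_1)\otimes s(\varphi_2)$. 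Once \eqref{multiplicativity under tensor product} is established, \eqref{additivity under tensor product} follows at once from the definition of $\D{\cdot}{\cdot}{\alpha}{z}$, the identity $(\psi_1\bar\otimes\psi_2)(1)=\psi_1(1)\psi_2(1)$, and the conventions $\log0=-\infty$, $\log\infty=\infty$. I would prove \eqref{multiplicativity under tensor product} in the three cases of the hypothesis.

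When $0<\alpha<1$, substituting the above identifications into the defining trace shows that the element inside the outer power is $A_1\otimes A_2$ with $A_i:=h_{\varphi_i}^{(1-\alpha)/2z}h_{\psi_i}^{\alpha/z}h_{\varphi_i}^{(1-\alpha)/2z}\in L^z(\M)_+$; hence its $z$th power is $A_1^z\otimes A_2^z$ and $\Q{\psi_1\bar\otimes\psi_2}{\varphi_1\bar\otimes\varphi_2}{\alpha}{z}=\tr(A_1^z)\tr(A_2^z)=\Q{\psi_1}{\varphi_1}{\alpha}{z}\Q{\psi_2}{\varphi_2}{\alpha}{z}$. When both $\Q{\psi_i}{\varphi_i}{\alpha}{z}<\infty$, Lemma \ref{identity lemma} provides $y_i\in L^{2z}(\M)s(\varphi_i)$ with $h_{\psi_i}^{\alpha/2z}=y_ih_{\varphi_i}^{(\alpha-1)/2z}$ and $\Q{\psi_i}{\varphi_i}{\alpha}{z}=\|y_i\|_{2z}^{2z}$; then $y_1\otimes y_2\in L^{2z}(\M\bar\otimes\M)s(\varphi_1\bar\otimes\varphi_2)$ satisfies \eqref{alpha z identity prime} for the pair $(\psi_1\bar\otimes\psi_2,\varphi_1\bar\otimes\varphi_2)$, so $\Q{\psi_1\bar\otimes\psi_2}{\varphi_1\bar\otimes\varphi_2}{\alpha}{z}=\|y_1\otimes y_2\|_{2z}^{2z}=\|y_1\|_{2z}^{2z}\|y_2\|_{2z}^{2z}$, with no constraint on $z$ used.

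In the third case $z\ge\max\{\alpha-1,\alpha/2\}$ I would pass through the regularizations $\varphi_i+\varepsilon\psi_i$. First, for any $\psi\ne0$ and $\varepsilon>0$ one has $\Q{\psi}{\varphi+\varepsilon\psi}{\alpha}{z}<\infty$ when $z\ge\alpha-1$: putting $\varphi':=\varphi+\varepsilon\psi$ we have $h_\psi\le\varepsilon^{-1}h_{\varphi'}$ and $s(\psi)\le s(\varphi')$, so operator monotonicity of $t\mapsto t^{(\alpha-1)/z}$ gives $h_\psi^{(\alpha-1)/z}\le\varepsilon^{-(\alpha-1)/z}h_{\varphi'}^{(\alpha-1)/z}$, hence by a Douglas-type factorization (cf.\ \cite[Lemma A.58]{Hiai21}) there is $v\in\M$ with $h_\psi^{(\alpha-1)/2z}=vh_{\varphi'}^{(\alpha-1)/2z}$; consequently $h_\psi^{\alpha/2z}=h_\psi^{1/2z}h_\psi^{(\alpha-1)/2z}=(h_\psi^{1/2z}v)h_{\varphi'}^{(\alpha-1)/2z}$ with $h_\psi^{1/2z}v\in L^{2z}(\M)$, and Lemma \ref{identity lemma} (with $y=h_\psi^{1/2z}vs(\varphi')$) gives the claim. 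Now write $\Psi:=\psi_1\bar\otimes\psi_2$, $\Phi:=\varphi_1\bar\otimes\varphi_2$, $\Phi_\varepsilon:=(\varphi_1+\varepsilon\psi_1)\bar\otimes(\varphi_2+\varepsilon\psi_2)$. Since $\Phi\le\Phi_\varepsilon$, $\Phi_\varepsilon$ increases in $\varepsilon$, $\|h_{\Phi_\varepsilon}-h_\Phi\|_1\to0$ as $\varepsilon\searrow0$, and $z\ge\max\{\alpha-1,\alpha/2\}$, Theorem \ref{Thm2}\ref{order relation2} and \ref{jointly lower semicontinuity} give $\Q{\Psi}{\Phi}{\alpha}{z}=\sup_{\varepsilon>0}\Q{\Psi}{\Phi_\varepsilon}{\alpha}{z}=\lim_{\varepsilon\searrow0}\Q{\Psi}{\Phi_\varepsilon}{\alpha}{z}$; the second case applies at each level (the regularized quantities being finite), so $\Q{\Psi}{\Phi_\varepsilon}{\alpha}{z}=\Q{\psi_1}{\varphi_1+\varepsilon\psi_1}{\alpha}{z}\,\Q{\psi_2}{\varphi_2+\varepsilon\psi_2}{\alpha}{z}$. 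Each factor is non-decreasing as $\varepsilon\searrow0$ (Theorem \ref{Thm2}\ref{order relation2}) and strictly positive for $\psi_i\ne0$ (Theorem \ref{Thm2}\ref{strict positivity2}), so the supremum of the product equals the product of the suprema, which by Theorem \ref{Thm2}\ref{limit of epsilon2} is $\Q{\psi_1}{\varphi_1}{\alpha}{z}\,\Q{\psi_2}{\varphi_2}{\alpha}{z}$ (with the usual conventions for products involving $\infty$). If $\psi_1=0$ or $\psi_2=0$ then $\Psi=0$ and both sides of \eqref{multiplicativity under tensor product} vanish.

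The first two cases are routine; the real work is in the third, where one must handle infinite values. The two delicate points there are the finiteness of $\Q{\psi_i}{\varphi_i+\varepsilon\psi_i}{\alpha}{z}$ for every $\varepsilon>0$ (which lets the second case be invoked at each level) and the interchange of $\sup_{\varepsilon>0}$ with the product of the two $\varepsilon$-dependent factors; the monotonicity in $\varepsilon$ furnished by Theorem \ref{Thm2}\ref{order relation2} is precisely what legitimizes that interchange, so that one need not separately keep track of which of the $\Q{\psi_i}{\varphi_i}{\alpha}{z}$ are finite.
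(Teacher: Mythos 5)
Your proposal is correct and follows essentially the same route as the paper: the first two cases are exactly the content of \cite[Proposition 10]{KatoUeda23}, which the paper simply cites, and for $\alpha>1$ with $z\ge\max\{\alpha-1,\alpha/2\}$ the paper likewise regularizes to $\varphi_{i,\varepsilon}=\varphi_i+\varepsilon\psi_i$, verifies $\Q{\psi_i}{\varphi_{i,\varepsilon}}{\alpha}{z}<\infty$ via the same Douglas-type factorization $h_{\psi_i}^{(\alpha-1)/2z}=a_ih_{\varphi_{i,\varepsilon}}^{(\alpha-1)/2z}$, and passes to the limit using Theorem \ref{Thm2}\ref{order relation2}, \ref{jointly lower semicontinuity} and \ref{limit of epsilon2}. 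Your additional care about interchanging the supremum with the product and about the degenerate case $\psi_i=0$ only makes explicit what the paper leaves implicit.
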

\begin{proof}
    See \cite[Proposition 10]{KatoUeda23} for the case 
    that $0 < \alpha < 1$ or both $\Q{\psi_i}{\varphi_i}{\alpha}{z} < \infty$.
    \par
    We show equality \eqref{multiplicativity under tensor product} when 
    $\alpha > 1$ and $z \ge \max\{\alpha-1, \alpha/2\}$.
    For any $\varepsilon > 0$ and $i = 1, 2$,
    we put $\varphi_{i, \varepsilon} := \varphi_i + \varepsilon \psi_i$.
    There exist $\lambda_i > 0$, $i = 1, 2$
    such that
    $\psi_i \le \lambda_i \varphi_{i, \varepsilon}$ hold.
    By \cite[Lemma A.58]{Hiai21}, there exist
    $a_i \in \M$ such that
    $h_{\psi_i}^{(\alpha-1)/2z} 
    = a_i h_{\varphi_{i, \varepsilon}}^{(\alpha-1)/2z}$.
    Hence, for each $i = 1, 2$, $x_i= a_i^* h_{\psi_i}^{1/z}a_i 
    \in s(\varphi_{i, \varepsilon}) L^{z}(\M) s(\varphi_{i, \varepsilon})$
    and 
    \begin{equation*}
        h_{\psi_i}^{\alpha/z} 
        = h_{\psi_i}^{(\alpha-1)/2z} h_{\psi_i}^{1/z} 
        h_{\psi_i}^{(\alpha-1)/2z} 
        = h_{\varphi_{i, \varepsilon}}^{(\alpha-1)/2z} a_i^* h_{\psi_i}^{1/z} 
        a_i h_{\varphi_{i, \varepsilon}}^{(\alpha-1)/2z}
        = 
        h_{\varphi_{i, \varepsilon}}^{(\alpha-1)/2z}
        x_i h_{\varphi_{i, \varepsilon}}^{(\alpha-1)/2z}.
    \end{equation*}
    Thus, $x_i$ satisfies identity \eqref{alpha z identity} with
    $\psi_i$ and $\varphi_{i, \varepsilon}$
    and
    $\Q{\psi_i}{\varphi_{i, \varepsilon}}{\alpha}{z} < \infty$, 
    for each $i = 1, 2$. 
    Therefore, we can use equality \eqref{multiplicativity under tensor product}
    with $\psi_i$ and $\varphi_{i, \varepsilon}$.
    By Theorem \ref{Thm2}\ref{limit of epsilon2}, 
    $\Q{\psi_i}{\varphi_{i, \varepsilon}}{\alpha}{z} \to 
    \Q{\psi_i}{\varphi_i}{\alpha}{z}$ as $\varepsilon \searrow 0$
    and 
    $\Q{\psi_1}{\varphi_{1, \varepsilon}}{\alpha}{z} 
    \Q{\psi_2}{\varphi_{2, \varepsilon}}{\alpha}{z} \to 
    \Q{\psi_1}{\varphi_1}{\alpha}{z} \Q{\psi_2}{\varphi_2}{\alpha}{z}$ 
    as $\varepsilon \searrow 0$. 
    Since
    $\varphi_1 \bar{\otimes} \varphi_2 
    \le \varphi_{1, \varepsilon} \bar{\otimes} \varphi_{2, \varepsilon}$
    (see e.g., \cite[Section 8.8]{Stratila81})
    and $\varphi_{1, \varepsilon} \bar{\otimes} \varphi_{2, \varepsilon}
    \to \varphi_1 \bar{\otimes} \varphi_2$
    as $\varepsilon \searrow 0$ in the norm topology,
    we use \ref{order relation2} and \ref{jointly lower semicontinuity}
    of Theorem \ref{Thm2} and have
    $\Q{\psi_1 \bar{\otimes} \psi_2}{\varphi_{1, \varepsilon} \bar{\otimes}\varphi_{2, \varepsilon}}{\alpha}{z}
    \to \Q{\psi_1 \bar{\otimes} \psi_2}{\varphi_1 \bar{\otimes}\varphi_2}{\alpha}{z}$ as $\varepsilon \searrow 0$.
    Hence, we obtain equality \eqref{multiplicativity under tensor product}.
    Equality \eqref{additivity under tensor product} immediately follows  
    by the definition of $\D{\psi}{\varphi}{\alpha}{z}$.
\end{proof}
%%%%%%%%%%%%%
%%%%%%%%%%%%%%
%%%%%%%%%%%%%
\subsection{Questions and Comments} \label{questions}
The statements of Theorem \ref{Thm2} are unfortunately not as complete as 
those of Theorem \ref{Thm1} are.
Hence we do give several comments on what we could not establish. 
\par
%%Question joint lower semicontinuity
We could not establish the joint lower semi-continuity for all $(\alpha,z)$, 
and thus we would like to pose the following question: 
\begin{Ques} \label{question jointly lsc}
    Does our $\alpha$-$z$-R\'{e}nyi divergence satisfy the 
    joint lower semi-continuity for any $\alpha$, $z >0$ with $\alpha \ne 1$?    
\end{Ques}
It is known that this is in the affirmative in the finite dimensional case 
(see e.g., \cite[axiom (I)]{AudenaertDatta15}).
\begin{Cmt}
    Keep the notations in the proof of 
    Theorem \ref{Thm2}\ref{jointly lower semicontinuity}. 
    It seems difficult to show that 
    $y_n$ converges to some $y$ that satisfies identity \eqref{alpha z identity prime}
    with $\psi,\varphi$. 
    In fact, it is natural to use the $L^p$-$L^q$-duality 
    to prove the existence of a limit of $\{y_n\}$. 
    Then, we have to show that
    $h_{\psi_n}^{\alpha/2z}$ converges to
    $y h_{\varphi}^{(\alpha-1)/2z}$ in the weak sense.
    Here, we need $2z/\alpha \ge 1$.
\end{Cmt}
\begin{Rmk}
    If Question \ref{question jointly lsc} was settled in the affirmative, 
    then one would get rid of $z \ge \alpha/2$ from the assumption 
    of Theorem \ref{Thm2}\ref{limit of epsilon2}, 
    and hence apply the same omission to Proposition \ref{tensor product alpha z} too.
\end{Rmk}
%%Quesetion variational expression
We could unfortunately prove only a variational lower estimate 
in Theorem \ref{Thm2}\ref{variational expression2}. 
Thus the following is a natural question: 
\begin{Ques} \label{question variational expression}
    What is the condition admitting that inequality 
    \eqref{variational expression for alpha > 1 ineq} becomes equality?
\end{Ques}
In the finite dimensional case, 
Zhang \cite[Theorem 3.3]{Zhang20} established the variational expression 
of the $\alpha$-$z$-R\'{e}nyi divergence 
for any $\alpha$, $z >0$ with $\alpha \ne 1$.
Mosonyi \cite[Lemma 3.23]{Mosonyi23} did it for any $\alpha > 1$, $z >0$
in the infinite dimensional type I case under a few assumptions. 
It is desirable to show that
inequality \eqref{variational expression for alpha > 1 ineq} becomes equality
at least when $\max\{\alpha-1, \alpha/2\} \le z \le \alpha$. 
In fact, this restriction on $(\alpha,z)$ makes no trouble to prove the DPI
(see also Question \ref{question DPI}). 
The case of $z = \alpha$ (i.e., the sandwiched R\'{e}nyi divergence) was
completely settled by Jen\v{c}ov\'{a} \cite{Jencova21}.
\begin{Cmt}
    An element $a_0 \in \M_+$ which attains
    the maximum (resp.~minimum) of the variational expression of
    $\Q{\psi}{\varphi}{\alpha}{z}$ when $\alpha > 1$ (or $0<\alpha<1$)
    should be given by  
        \begin{equation*}
            a_0 = h_{\varphi}^{(1-\alpha)/2z}(h_{\varphi}^{(\alpha-1)/2z}
            h_{\psi}^{-\alpha/z}
            h_{\varphi}^{(\alpha-1)/2z})^{1-\alpha}h_{\varphi}^{(1-\alpha)/2z}
            = h_{\psi}^{-\alpha/2z}(h_{\psi}^{\alpha/2z}
            h_{\varphi}^{(1-\alpha)/z}
            h_{\psi}^{\alpha/2z})^\alpha h_{\psi}^{-\alpha/2z}
        \end{equation*}
    in a formal sense.
    (The second equality is due to Kubo--Ando's weighted geometric mean formula 
    $h_{\psi}^{-\alpha/z} \mathbin{\#}_{\alpha} h_{\varphi}^{(1-\alpha)/z}
    = h_{\varphi}^{(1-\alpha)/z} \mathbin{\#}_{1-\alpha} h_{\psi}^{-\alpha/z}$
    in a formal sense.)
    In fact, the $a_0$ and $a_0'$ in the proof of 
    Theorem \ref{Thm1}\ref{variational expression1} can be written as 
    the second term above in a formal sense 
    and the last term above in a formal sense, respectively.
    Besides, if 
    \eqref{variational expression for alpha > 1 ineq} became 
    equality when $\psi \le \lambda \varphi$ for some $\lambda > 0$, 
    then one would be able to show it for any $\psi$, $\varphi$ 
    under the same assumption as Theorem \ref{Thm2}\ref{limit of epsilon2} 
    (by the same argument as the case of $0 < \alpha < 1$).
\end{Cmt}
%%Question strict positivity
We could not show the sufficiency for equality of the strict positivity 
for any $\alpha$, $z > 0$ with $\alpha \ne 1$
in Theorems \ref{Thm1} and \ref{Thm2}\ref{strict positivity1}.
\begin{Ques}
    Let $\alpha$, $z > 0$ with $\alpha \ne 1$.
    Does $\Q{\psi}{\varphi}{\alpha}{z} = \psi(1)^{\alpha}\varphi(1)^{1-\alpha}$
    or equivalently
    $\D{\psi}{\varphi}{\alpha}{z} = \log(\psi(1)/\varphi(1))$
    hold only if $(1/\psi(1))\psi = (1/\varphi(1))\varphi$ holds
    when $\psi$, $\varphi \ne 0$?
\end{Ques}
It is known that this is in the affirmative in the infinite dimensional type I case
(see \cite[Corollary 3.27]{Mosonyi23} and \cite[Theorem 2.1(1)]{ZhangQi23}).
\begin{Cmt}
    The equality condition of $\Q{\psi}{\varphi}{\alpha}{z} 
    = \psi(1)^{\alpha}\varphi(1)^{1-\alpha}$ is the
    same as the equality condition of generalized H\"{o}lder's inequality
    since we can rewrite it in terms of $L^p$-norms. 
    We give an equality condition in \ref{Appendix Holder} 
    when $r \ge 1$.
    However, we could not give it for all $p$, $q$, $r \in (0, \infty]$ with
    $1/r = 1/p + 1/q$.
\end{Cmt}
%%Question DPI
We could not show the DPI and the joint convexity 
for $\alpha > 1$.
\begin{Ques} \label{question DPI}
    Does our $\alpha$-$z$-R\'{e}nyi divergence also satisfy the DPI 
    under unital normal (completely) positive map $\gamma$ 
    if $\alpha >1$ and $\max\{\alpha-1, \alpha/2\} \le z \le \alpha$? 
\end{Ques}
The question was settled in the affirmative 
in the finite dimensional case by Zhang in \cite[Theorem 1.2]{Zhang20}. 
Moreover, it has been known, in the finite dimensional case, 
that the $\alpha$-$z$-R\'{e}nyi divergence satisfies the DPI 
under unital completely positive maps 
if and only if ($0 < \alpha <1$ and $z \ge \max\{\alpha, 1-\alpha\}$) 
or ($\alpha >1$ and $\max\{\alpha-1, \alpha/2\} \le z \le \alpha$). 
\begin{Cmt}
    If Question \ref{question variational expression} was settled in the affirmative,
    then one would only have to show the following:  
    \begin{equation} \label{question ineq}
            \|h_{\psi}^{1/2p} \gamma(b) h_{\psi}^{1/2p} \|_p
            \ge \| h_{\psi \circ \gamma}^{1/2p} 
            \, b \, h_{\psi\circ\gamma}^{1/2p}\|_p 
    \end{equation}
    for any $p \in [1/2, 1]$, $b \in s(\psi \circ \gamma) \N s(\psi \circ \gamma)$ 
    and any unital normal (completely) positive map $\gamma$. 
    It is no wonder that we consider this inequality. 
    \par
    Applying inequality \eqref{question ineq} to 
    the unital normal completely positive map 
    $\gamma \colon \M \longrightarrow \M \oplus \M$ defined by 
    $\gamma(a) := a \oplus a$ and 
    $\psi := \lambda \psi_1 \oplus (1-\lambda)\psi_2 \in (\M \oplus \M)_*^+$ gives 
    \begin{equation*}
        \lambda \tr((b^{1/2}h_{\psi_1}^{1/p} b^{1/2})^{p})
        + (1-\lambda) \tr((b^{1/2} h_{\psi_2}^{1/p} b^{1/2})^{p})
        \ge \tr((b^{1/2}(\lambda h_{\psi_1} 
        + (1-\lambda) h_{\psi_2})^{1/p} b^{1/2})^{p}).
    \end{equation*}
    It is known that this holds for any $1/2 \le p \le 1$ 
    in the finite dimensional case 
    due to Carlen and Lieb \cite[Theorem 1.1]{CarlenLieb08}, 
    and it was used to prove the DPI in the finite dimensional case 
    by Zhang \cite[pp.~12--14]{Zhang20}.
    (Remark that this inequality is closely related to the operator convexity 
    of $f(t) = t^{1/p}$, and thus does not hold when $p<1/2$.)
\end{Cmt}
\begin{Rmk}
    We first remark that the joint convexity can be obtained as  
    a corollary of the DPI. 
    In the finite dimensional case, it is known, 
    see \cite[Proposition 7]{CarlenFrankLieb18} or \cite[Proposition 2.1]{Zhang20}, 
    that the $\alpha$-$z$-R\'{e}nyi divergence satisfies the DPI 
    under unital completely positive maps
    if and only if 
    it has the joint convexity (resp.~the joint concavity), 
    when $\alpha > 1$ (resp.~$0 < \alpha < 1$).
\end{Rmk}
%%Question monotonicity in z
When $\alpha>1$, the $\alpha$-$z$-R\'{e}nyi divergence should have the monotonicity in $z$, which corresponds to Theorem \ref{Thm1}\ref{monotonicity1}.
\begin{Ques} \label{question monotonicity}
    When $\alpha > 1$, if $0 \le z \le z'$, then
    does $\Q{\psi}{\varphi}{\alpha}{z} \ge \Q{\psi}{\varphi}{\alpha}{z'}$ or 
    equivalently $\D{\psi}{\varphi}{\alpha}{z} \ge \D{\psi}{\varphi}{\alpha}{z'}$ 
    hold?
\end{Ques}
This is in the affirmative in the finite dimensional case 
(see \cite[Proposition 4.31(1)]{jaksicetal11}) 
and even in the infinite dimensional type I case 
(see \cite[Proposition 3.16]{Mosonyi23}). 
\begin{Cmt}
    The ALT inequality was used to prove the monotonicity in $z$ in 
    \cite{jaksicetal11} 
    and a certain approximation argument in addition was also 
    used in \cite{Mosonyi23}.
    However, as mentioned in \cite[Remark 3.18(1)]{Hiai21}, 
    we cannot use the ALT inequality in the von Neumann algebra setting.
    Besides, this question can be understood as a generalization of 
    the relation between the R\'{e}nyi divergence and
    the sandwiched R\'{e}nyi divergence 
    (see Remark \ref{rmk of monotonicity}).
    In \cite{BST18}, \cite{Jencova18}, a 
    complex analysis method (Hadamard's three line theorem) was used
    to prove this relation without the ALT inequality.
\end{Cmt}
%%Question tensor product
Here is a question on the additivity under tensor products 
(Proposition \ref{tensor product alpha z}).
\begin{Ques}
    Does our $\alpha$-$z$-R\'{e}nyi divergence enjoy 
    the additivity under tensor products in the full generality? 
    Namely, do equalities 
    \eqref{multiplicativity under tensor product} 
    and \eqref{additivity under tensor product} hold 
    for any $\alpha$, $z > 0$ with $\alpha \ne 1$? 
\end{Ques}
In the finite dimensional case or more generally  
the infinite dimensional type I case, 
equalities
\eqref{multiplicativity under tensor product} 
and \eqref{additivity under tensor product} hold
for any $\alpha$, $z > 0$ with $\alpha \ne 1$ 
without any additional assumption. 
See \cite[axiom (V)]{AudenaertDatta15}, 
\cite[Lemma 3.22]{Mosonyi23}
and \cite[Theorem 2.1(5)]{ZhangQi23}.
\subsection*{Acknowledgements}
We would like to thank Professor Fumio Hiai for 
giving his online lectures during Feb.--Mar., 2022, 
which gave us the original motivation to this work
as well as for his comments to a draft version of this paper, 
and also Professor Yoshimichi Ueda for 
helpful discussions.
%%%%%%%%%%%%%%%%%%%
%%%%%%%%%%%%%%%%
%%%
%セクション番号を変更
\setcounter{section}{0}
\renewcommand{\thesection}{Appendix \Alph{section}}
%%%%%%%%%%%%%%%
%%%%%%%%%%%%%%
%%%%%%%%%%%%%%
\section{A certain inequality via Petz's recovery map} \label{Appendix}
We will show a certain trace inequality (or $L^p$-norm inequality) used in 
the proof of Theorem \ref{Thm1}\ref{DPI1} 
via Petz's recovery map in this section. 
This appendix is based on Hiai's online lectures
entitled `Quantum Analysis and Quantum Information Theory' during
Feb.--Mar., 2022.
We also refer to \cite[Section 3.3]{Jencova18}, 
\cite[Section 4.2]{Jencova21}, 
\cite[Section 6.1, Lemma 8.3]{Hiai21} for the materials below. 
Thus, we do not claim any credit to the contents of this appendix.
\par
In this section, 
let $\M$, $\N$ be von Neumann algebras 
and $\gamma \colon \N \longrightarrow \M$ be a unital normal positive map
and $\gamma_* \colon L^1(\M) \simeq \M_* \longrightarrow L^1(\N) \simeq \N_* $ 
be the predual map of $\gamma$. 
Namely, $\gamma_*(h_{\varphi}) = h_{\varphi \circ \gamma}$ holds 
for every $\varphi \in \M_*$, and we immediately see that 
$\gamma_*$ preserves the $\tr$-functional.
\par
Next, we define Petz's recovery map (or the Petz dual) of $\gamma$.
Here, we employ the definition of Petz's recovery map in 
\cite[Lemma 8.3, equation (8.5)]{Hiai21}. 
\begin{Def}
    For every $\varphi \in \M_*^+$, 
    \emph{Petz's recovery map} of $\gamma$ with respect to $\varphi$ 
    is a unital normal positive map 
    $\gamma_{\varphi}^\star \colon s(\varphi) \M s(\varphi) \longrightarrow 
    s(\varphi \circ \gamma) \N s(\varphi \circ \gamma)$
    such that 
    \begin{equation} \label{Petz dual}
        h_{\varphi \circ \gamma}^{1/2} \gamma_{\varphi}^\star (a) 
        h_{\varphi \circ \gamma}^{1/2}
        = \gamma_* (h_{\varphi}^{1/2} a h_{\varphi}^{1/2})
        \qquad(a \in s(\varphi) \M s(\varphi)).
    \end{equation}
\end{Def}
\begin{Rmk}
    Petz's recovery map is uniquely defined 
    (see \cite[Proposition 6.6]{Hiai21}), 
    and hence this definition is well defined.
\end{Rmk}
The next lemma was given in \cite[Proposition 6.3]{Hiai21}
(cf.~\cite[Remark 6.7]{Hiai21}).
\begin{Lem}
    Let $\gamma \colon \N \longrightarrow \M$ 
    be a unital normal positive map,
    $\varphi \in \M_*^+$ and $\gamma_{\varphi}^\star$ be Petz's recovery map.
    Then, we have
    \begin{enumerate}[label = \rm{(\arabic*)}]
        \item $\varphi \circ \gamma \circ \gamma_{\varphi}^\star =
        \varphi \!\upharpoonright_{s(\varphi) \M s(\varphi)}$.
        \item $(\gamma_\varphi^\star)_{\varphi\circ\gamma}^\star = 
        s(\varphi) \gamma(\cdot) s(\varphi) \!\upharpoonright_{
        s(\varphi \circ \gamma) \N s(\varphi \circ \gamma)}$, 
        i.e., 
        $s(\varphi) \gamma(\cdot) s(\varphi) \!\upharpoonright_{s(\varphi \circ \gamma) \N s(\varphi \circ \gamma)} \colon 
        s(\varphi \circ \gamma) \N s(\varphi \circ \gamma)
        \longrightarrow s(\varphi) \M s(\varphi)$ 
        is Petz's recovery map of $\gamma_{\varphi}^\star$ with respect to
        $\varphi \circ \gamma \!\upharpoonright_{s(\varphi \circ \gamma) \N s(\varphi \circ \gamma)}$. 
    \end{enumerate}
\end{Lem}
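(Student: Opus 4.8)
The plan is to obtain (1) by applying the trace functional $\tr$ to the defining relation \eqref{Petz dual}, and then to deduce (2) from (1) together with the uniqueness of Petz's recovery map recalled above.

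For (1): fix $a\in s(\varphi)\M s(\varphi)$ and apply $\tr$ to both sides of \eqref{Petz dual}. On the left, the trace property $\tr(xy)=\tr(yx)$ (legitimate since all the products involved lie in $L^1$ by H\"older's inequality) together with the identity $\tr(c\,h_\omega)=\omega(c)$ coming from the Haagerup correspondence give $\tr(h_{\varphi\circ\gamma}^{1/2}\gamma_\varphi^\star(a)h_{\varphi\circ\gamma}^{1/2})=\tr(\gamma_\varphi^\star(a)h_{\varphi\circ\gamma})=(\varphi\circ\gamma)(\gamma_\varphi^\star(a))=\varphi(\gamma(\gamma_\varphi^\star(a)))$. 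On the right, since $\gamma_*$ preserves $\tr$, one gets $\tr(\gamma_*(h_\varphi^{1/2}ah_\varphi^{1/2}))=\tr(h_\varphi^{1/2}ah_\varphi^{1/2})=\tr(a\,h_\varphi)=\varphi(a)$. Comparing the two yields $\varphi\circ\gamma\circ\gamma_\varphi^\star(a)=\varphi(a)$, i.e.\ (1).

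For (2): write $\psi:=\varphi\circ\gamma$, $N_0:=s(\psi)\N s(\psi)$, $M_0:=s(\varphi)\M s(\varphi)$ and $\psi_0:=\psi\!\upharpoonright_{N_0}$, a faithful normal state on $N_0$. Since $\gamma_\varphi^\star\colon M_0\to N_0$ is a unital normal positive map, its Petz recovery map $(\gamma_\varphi^\star)_{\psi_0}^\star$ is the unique unital normal positive map satisfying the instance of \eqref{Petz dual} relative to $\gamma_\varphi^\star$ and $\psi_0$. By (1) we have $\psi_0\circ\gamma_\varphi^\star=\varphi\!\upharpoonright_{M_0}$, so $s(\psi_0\circ\gamma_\varphi^\star)=s(\varphi)$ and $h_{\psi_0\circ\gamma_\varphi^\star}$ is, under the canonical identification $L^1(M_0)\simeq s(\varphi)L^1(\M)s(\varphi)$, just $h_\varphi$; likewise $s(\psi_0)=s(\psi)$ and $h_{\psi_0}=h_\psi$. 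Hence $(\gamma_\varphi^\star)_{\psi_0}^\star$ is a map $N_0\to M_0$, and by uniqueness it suffices to check that $\beta(b):=s(\varphi)\gamma(b)s(\varphi)$ is a unital normal positive map $N_0\to M_0$ satisfying $h_\varphi^{1/2}\beta(b)h_\varphi^{1/2}=(\gamma_\varphi^\star)_*(h_\psi^{1/2}bh_\psi^{1/2})$ for all $b\in N_0$. Unitality of $\beta$ is the only point needing a word: $\varphi(1-\gamma(s(\psi)))=(\varphi\circ\gamma)(1-s(\psi))=0$ and faithfulness of $\varphi$ on $M_0$ force $s(\varphi)\gamma(s(\psi))s(\varphi)=s(\varphi)$. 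For the identity, pair the right-hand side with an arbitrary $a\in M_0$: transposing $\gamma_\varphi^\star$ through the predual pairing and cycling the trace turns it into $\tr(b\,h_\psi^{1/2}\gamma_\varphi^\star(a)h_\psi^{1/2})$, which by \eqref{Petz dual} for $\gamma$ equals $\tr(b\,\gamma_*(h_\varphi^{1/2}ah_\varphi^{1/2}))$; transposing $\gamma$ through the predual pairing $\langle\gamma_*(\cdot),\cdot\rangle=\langle\cdot,\gamma(\cdot)\rangle$ and cycling once more gives $\tr(a\,h_\varphi^{1/2}\gamma(b)h_\varphi^{1/2})=\langle h_\varphi^{1/2}\beta(b)h_\varphi^{1/2},a\rangle$. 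As $a$ was arbitrary, the required identity follows, and uniqueness of the Petz recovery map completes the proof.

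The work is essentially bookkeeping: one must keep careful track of the support projections and of the identifications $L^p(M_0)\simeq s(\varphi)L^p(\M)s(\varphi)$, $L^p(N_0)\simeq s(\psi)L^p(\N)s(\psi)$, so that the trace, the Haagerup correspondence, and the predual pairings of $M_0$ and $N_0$ coincide with the ones computed inside $\M$ and $\N$. I do not expect any genuine obstacle beyond this, since the analytic ingredients needed (H\"older's inequality, the trace property, and the cited uniqueness of Petz's recovery map) are already available.
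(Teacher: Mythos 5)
Your argument is correct, and it is worth noting that the paper itself offers no proof of this lemma at all --- it simply cites \cite[Proposition 6.3]{Hiai21} --- so you have supplied a self-contained verification where the paper defers to a reference. Both halves of your proof are sound: (1) is exactly the trace-pairing computation one expects (using that $\gamma_*$ preserves $\tr$ and that $\tr(c\,h_\omega)=\omega(c)$), and your verification of (2) via the predual pairing and the uniqueness of the recovery map is the standard route. One structural remark: the identity you establish for $\beta(b)=s(\varphi)\gamma(b)s(\varphi)$, namely $h_{\varphi}^{1/2}\beta(b)h_{\varphi}^{1/2}=(\gamma_{\varphi}^{\star})_*(h_{\varphi\circ\gamma}^{1/2}\,b\,h_{\varphi\circ\gamma}^{1/2})$, is precisely the paper's formula \eqref{Petz dual prime}, which the paper instead \emph{derives as a consequence} of the lemma; you have reversed the logical order by proving that formula directly and reading off (2) from uniqueness, which is perfectly legitimate. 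The only points that genuinely require care --- and which you correctly flag --- are the identifications $L^1(s(\varphi)\M s(\varphi))\simeq s(\varphi)L^1(\M)s(\varphi)$ (so that $h_{\psi_0\circ\gamma_{\varphi}^{\star}}$ really is $h_{\varphi}$ and the pairing against $a\in s(\varphi)\M s(\varphi)$ separates points of that corner of $L^1(\M)$), and the unitality of $\beta$, for which your argument $\varphi(1-\gamma(s(\varphi\circ\gamma)))=0$ combined with faithfulness of $\varphi$ on its support corner is correct.
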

By the above lemma and applying formula \eqref{Petz dual} to 
$\gamma_{\varphi}^\star$ and $\varphi \circ \gamma$, 
we can rewrite formula \eqref{Petz dual} as
\begin{equation} \label{Petz dual prime}
    h_{\varphi}^{1/2} \gamma(b) h_{\varphi}^{1/2} 
    = (\gamma_{\varphi}^{\star})_* (h_{\varphi \circ \gamma}^{1/2} 
    \, b \, h_{\varphi\circ\gamma}^{1/2})
    \qquad(b \in s(\varphi\circ\gamma) \N b \in s(\varphi\circ\gamma)).
\end{equation}
The next lemma is due to Jen\v{c}ov\'{a} 
\cite[Proposition 3.12]{Jencova18}.
\begin{Lem} \label{Jencova prop3.12}
    Let $p \in [1, \infty]$, 
    $L^p(\M, \varphi)_{1/2}$ be the symmetric Kosaki non-commutative
    $L^p$-space 
    and $\gamma \colon \N \longrightarrow \M$ 
    be a unital normal positive map.
    Then $\gamma_*$ maps $L^1(\M, \varphi)_{1/2} $ 
    into $L^1(\N, \varphi\circ \gamma)_{1/2}$
    and $\gamma_*$ restricts to a contraciton
    $L^p(\M, \varphi)_{1/2} \longrightarrow L^p(\N, \varphi\circ \gamma)_{1/2}$.
\end{Lem}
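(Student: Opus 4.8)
The plan is to realise both symmetric Kosaki spaces as complex interpolation spaces and to recognise $\gamma_*$ as a morphism of the underlying compatible couples, so that the assertion becomes an instance of functoriality of the complex interpolation functor. Recall from Kosaki's construction that $L^p(\M,\varphi)_{1/2}$ is the interpolation space of the couple $(\M, L^1(\M))$ in which $\M$ is embedded via the symmetric map $j_\varphi\colon a\mapsto h_\varphi^{1/2}ah_\varphi^{1/2}$ (which is injective on $s(\varphi)\M s(\varphi)$): for $p\in(1,\infty)$ one has $L^p(\M,\varphi)_{1/2}=[\M,L^1(\M)]_{1/p}$, while the endpoints $p=\infty$ and $p=1$ correspond to $\M$ and to $L^1(\M,\varphi)_{1/2}$ (the $\|\cdot\|_1$-closure of $h_\varphi^{1/2}\M h_\varphi^{1/2}$ in $L^1(\M)$), respectively. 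The same description applies with $\varphi\circ\gamma$ and $j_{\varphi\circ\gamma}$ in place of $\varphi$ and $j_\varphi$.

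First I would establish contractivity at the two endpoints. Since $\gamma$ is unital and positive it is a contraction, so its predual map $\gamma_*\colon L^1(\M)\to L^1(\N)$ is a contraction; this settles $p=1$, and the claim that $\gamma_*$ maps $L^1(\M,\varphi)_{1/2}$ into $L^1(\N,\varphi\circ\gamma)_{1/2}$ follows by combining $\|\cdot\|_1$-continuity of $\gamma_*$ with the identity
\[
    \gamma_*\bigl(h_\varphi^{1/2}ah_\varphi^{1/2}\bigr)
    =h_{\varphi\circ\gamma}^{1/2}\,\gamma_\varphi^\star(a)\,h_{\varphi\circ\gamma}^{1/2}
    =j_{\varphi\circ\gamma}\bigl(\gamma_\varphi^\star(a)\bigr),\qquad
    a\in s(\varphi)\M s(\varphi),
\]
which is precisely \eqref{Petz dual}: it shows that $\gamma_*$ carries the dense subspace $h_\varphi^{1/2}\M h_\varphi^{1/2}$ into $h_{\varphi\circ\gamma}^{1/2}\N h_{\varphi\circ\gamma}^{1/2}$. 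For the endpoint $p=\infty$ the same identity shows that, transported through $j_\varphi$ and $j_{\varphi\circ\gamma}$, the map $\gamma_*$ is implemented on $\M$ by Petz's recovery map $\gamma_\varphi^\star$; since $\gamma_\varphi^\star$ is itself unital and positive it is a contraction, i.e.\ $\|\gamma_\varphi^\star(a)\|\le\|a\|$. Hence $\gamma_*$ is a morphism of the couples $(\M,L^1(\M))$ and $(\N,L^1(\N))$ (equipped with the respective symmetric embeddings) of norm at most $1$ on each of the two endpoint spaces.

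Finally I would invoke the interpolation theorem: $\gamma_*$ restricts to a bounded map $[\M,L^1(\M)]_{1/p}\to[\N,L^1(\N)]_{1/p}$ of norm at most $1^{1-1/p}\cdot 1^{1/p}=1$, that is, to a contraction $L^p(\M,\varphi)_{1/2}\to L^p(\N,\varphi\circ\gamma)_{1/2}$ for every $p\in[1,\infty]$. The one non-formal ingredient is the $p=\infty$ endpoint, where one genuinely needs \eqref{Petz dual} --- the fact that under the symmetric identification $\gamma_*$ is implemented by the Petz recovery map --- together with the elementary observation that a unital positive map is contractive; the couple structure of the Kosaki spaces and the functoriality of complex interpolation are standard and may be quoted.
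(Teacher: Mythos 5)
Your argument is correct: the paper itself gives no proof of this lemma, citing Jen\v{c}ov\'{a}'s Proposition 3.12, and her proof is exactly the interpolation argument you describe --- $\gamma_*$ is contractive on $L^1$ because a unital positive map has norm one, it intertwines the symmetric embeddings with the Petz recovery map via \eqref{Petz dual} and is therefore contractive at the $p=\infty$ endpoint since $\gamma_\varphi^\star$ is again unital and positive, and functoriality of complex interpolation does the rest. The only ingredient you implicitly quote is the existence of $\gamma_\varphi^\star$ itself (i.e.\ that $\gamma_*(h_\varphi^{1/2}ah_\varphi^{1/2})$ really lies in $h_{\varphi\circ\gamma}^{1/2}\N h_{\varphi\circ\gamma}^{1/2}$), which the paper also takes from the literature, so nothing is missing.
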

By formula \eqref{Petz dual prime} and applying 
Lemma \ref{Jencova prop3.12} to $\gamma_{\varphi}^\star$, 
we have
\begin{align*}
    \|h_{\varphi}^{1/2} \gamma(b) h_{\varphi}^{1/2} \|_{p, \varphi, 1/2}
    = \|(\gamma_{\varphi}^{\star})_* (h_{\varphi \circ \gamma}^{1/2} 
    \, b \, h_{\varphi\circ\gamma}^{1/2})\|_{p, \varphi, 1/2} 
    \le \| h_{\varphi \circ \gamma}^{1/2} 
    \, b \, h_{\varphi\circ\gamma}^{1/2}\|_{p, \varphi\circ\gamma, 1/2}
\end{align*}
for every $b \in s(\varphi \circ \gamma) \N s(\varphi \circ \gamma)$.
By the identification of Kosaki and Haagerup 
non-commutative $L^p$-spaces, 
we have, for every $p \in [1, \infty]$, 
\begin{equation} \label{appendix inequality}
    \|h_{\varphi}^{1/2p} \gamma(b) h_{\varphi}^{1/2p} \|_p
    \le \| h_{\varphi \circ \gamma}^{1/2p} 
    \, b \, h_{\varphi\circ\gamma}^{1/2p}\|_p  
    \qquad( b \in s(\varphi \circ \gamma) \N s(\varphi \circ \gamma) ).
\end{equation}
\begin{Rmk}
    In the finite dimensional case,
    a similar inequality is given in \cite[Theorem 1.9]{CarlenZhang22}.
    Let $p\ge 1$, $\Phi \colon M_m(\mathbb{C}) \longrightarrow M_n(\mathbb{C})$
    be a unital Schwarz map 
    (i.e., $\Phi(B^*B) \ge \Phi(B)^*\Phi(B)$ for each $B \in M_m(\mathbb{C})$)
    and $\Phi^*$ be the adjoint map of $\Phi$.
    Then, for any positive invertible $A \in M_n(\mathbb{C})$  and
    any $B \in M_m(\mathbb{C})$, we have
    \begin{equation} \label{CarlenZhang}
        \operatorname{Tr}((\Phi(B)^*A^{1/p}\Phi(B))^{p})
        \le \operatorname{Tr}((B^*\Phi^*(A)^{1/p}B)^{p}).
    \end{equation}
    By Lemma \ref{trace equality} (slightly extended) and the Schwarz property, 
    \begin{equation*}
        \text{LHS of \eqref{CarlenZhang}}
        = \operatorname{Tr}((A^{1/2p}\Phi(B)\Phi(B)^*A^{1/2p})^{p})
        \le \operatorname{Tr}((A^{1/2p}\Phi(BB^*)A^{1/2p})^{p}).
    \end{equation*}
    Let $E:= s(\Phi^*(A)) = \Phi^*(A)^0 \in M_m(\mathbb{C})$ 
    be the support projection of $\Phi^*(A)$.
    If $BB^* \in E M_m(\mathbb{C}) E$, then 
    \begin{equation*}
        \operatorname{Tr}((A^{1/2p}\Phi(BB^*)A^{1/2p})^{p})
        \le \operatorname{Tr}((\Phi^*(A)^{1/2p}BB^*\Phi^*(A)^{1/2p})^{p})
        = \text{RHS of \eqref{CarlenZhang}}
    \end{equation*}
    by \eqref{appendix inequality}. 
    Namely, inequality \eqref{CarlenZhang} is related to 
    inequality \eqref{appendix inequality}.
\end{Rmk}
%%%%%%%%%%%%
%%%%%%%%%%%5
%%%%%%%%%%%
\section{Equality in H\"{o}lder's inequality} \label{Appendix Holder}
In this appendix, we will give an equality condition of H\"{o}lder's inequality
in the framework of the Haagerup non-comuutative $L^p$-spaces.
The case of non-commutative $L^p$-spaces 
asociated with semifinite von Neumann algebras
was discussed by Larotonda \cite{Larotonda16} 
and historical comments therein. 
We are grateful to Professor Yoshimichi Ueda 
for several advices about this section.
\begin{Thm} \label{equality Holder thm}
    Let $p$, $q \in (1, \infty)$, $r \ge 1$ with $1/r = 1/p + 1/q$,
    $x \in L^p(\M)_+$ and $y \in L^q(\M)_+$.
    If $\|xy\|_r = \|x\|_p \|y\|_q$, 
    then $x^p = \lambda y^q$ or $y^q = \lambda x^p$ for some $\lambda \ge 0$.
\end{Thm}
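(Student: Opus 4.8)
The plan is to reduce the equality case of the two–factor Hölder inequality $\|xy\|_r\le\|x\|_p\|y\|_q$ to equality cases of the $L^{s}$--$L^{s'}$ duality pairing, which are rigid because the Haagerup spaces $L^{s}(\M)$ are strictly convex for $1<s<\infty$. Note that, since $p,q\in(1,\infty)$ and $r\ge 1$, the conjugate exponents $p'$, $q'$ and $r'=r/(r-1)$ (with $r'=\infty$ when $r=1$) all lie in $[1,\infty]$, and one has $1/q'=1/p+1/r'$ and $1/p'=1/q+1/r'$.

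First I would dispose of the degenerate cases: if $x=0$ take $\lambda=0$ in $x^p=\lambda y^q$, if $y=0$ take $\lambda=0$ in $y^q=\lambda x^p$, and if $xy=0$ with $x,y\ne 0$ then $\|x\|_p\|y\|_q=\|xy\|_r=0$ is impossible. So assume $x,y\ne 0$ and $xy\ne 0$. Take the polar decomposition $xy=v|xy|$ with $v^*v=s(|xy|)$, and put $c:=v|xy|^{r-1}\in L^{r'}(\M)$ (so $c=v$ when $r=1$); a short computation gives $c^*xy=|xy|^r$, hence $\tr(c^*xy)=\|xy\|_r^r$, and $\|c\|_{r'}=\|xy\|_r^{\,r-1}$. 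Cycling $\tr$ and applying the generalized Hölder inequality then gives the two chains
\[
\|xy\|_r^r=\tr\big((c^*x)y\big)\le\|c^*x\|_{q'}\|y\|_q\le\|c\|_{r'}\|x\|_p\|y\|_q=\|xy\|_r^{\,r-1}\|x\|_p\|y\|_q=\|xy\|_r^r,
\]
\[
\|xy\|_r^r=\tr\big((yc^*)x\big)\le\|yc^*\|_{p'}\|x\|_p\le\|y\|_q\|c\|_{r'}\|x\|_p=\|xy\|_r^{\,r-1}\|y\|_q\|x\|_p=\|xy\|_r^r,
\]
where the last equalities use the hypothesis $\|x\|_p\|y\|_q=\|xy\|_r$. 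Hence every inequality above is an equality.

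The decisive step is to read off $\tr\big((c^*x)y\big)=\|c^*x\|_{q'}\|y\|_q$ and $\tr\big((yc^*)x\big)=\|yc^*\|_{p'}\|x\|_p$. Since $y\ge 0$ and $y\ne 0$, the element $y^{q-1}/\|y\|_q^{\,q-1}$ is a unit vector of $L^{q'}(\M)$ at which the norm-$\|y\|_q$ functional $a\mapsto\tr(ay)$ is attained (here one uses $\tr\big(y^{(q-1)q'}\big)=\tr(y^q)$); strict convexity of $L^{q'}(\M)$ makes this norming vector unique, so $c^*x=\mu_1\,y^{q-1}$ for some $\mu_1>0$, and symmetrically $yc^*=\mu_2\,x^{p-1}$ for some $\mu_2>0$. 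Taking adjoints (all of $x,y,x^{p-1},y^{q-1}$ being positive) yields $xc=\mu_1\,y^{q-1}$ and $cy=\mu_2\,x^{p-1}$, whence
\[
\mu_2\,x^p=x(cy)=(xc)y=\mu_1\,y^q,
\]
i.e. $x^p=\lambda y^q$ with $\lambda=\mu_1/\mu_2>0$. (So in fact the first alternative always holds once $x,y\ne 0$; the ``or'' only covers the degenerate cases.)

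The main obstacle I anticipate is the ``unique norming vector'' input, namely the strict (or uniform) convexity of $L^{s}(\M)$ for $1<s<\infty$ in the Haagerup framework, together with the routine but support-sensitive verifications: that $y^{q-1}/\|y\|_q^{\,q-1}$ really is the norming vector (traciality and faithfulness of $\tr$), that $c^*x$ and $yc^*$ are nonzero outside the already-excluded degenerate case (so the normalization is legitimate), and that the polar–decomposition and functional-calculus identities $c^*xy=|xy|^r$, $\|c\|_{r'}=\|xy\|_r^{\,r-1}$ and $x\,x^{p-1}=x^p$ hold verbatim in $L^p(\M)$. Everything else is bookkeeping with generalized Hölder and the functional calculus for positive elements of $L^p(\M)$.
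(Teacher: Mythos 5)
Your argument is correct, but it is genuinely different from the one in the paper. The paper first treats $r=1$: writing $x=h_\varphi^{1/p}$, $y=h_\psi^{1/q}$ and $xy=u|xy|$, it applies Hadamard's three line theorem to $f(z)=\tr(u^*h_\varphi^zh_\psi^{1-z})$ to transport the equality at $z=1/p$ to the point $z=1/2$, where the equality case of the Cauchy--Schwarz inequality in the Hilbert space $L^2(\M)$ gives $u^*h_\varphi^{1/2}=\alpha h_\psi^{1/2}$ and $h_\psi^{1/2}u^*=\beta h_\varphi^{1/2}$; a short support argument then yields $h_\varphi=|\alpha|^2h_\psi$. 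The case $r>1$ is reduced to $r=1$ via Kosaki's Araki--Lieb--Thirring inequality. You instead pair $xy$ against the norming element $c=v|xy|^{r-1}\in L^{r'}(\M)$, force equality throughout two H\"older chains, and invoke uniqueness of the norming vector for the duality $L^{q'}(\M)^*\cong L^q(\M)$ (resp.\ $L^{p'}(\M)^*\cong L^p(\M)$), which follows from strict convexity; this identifies $c^*x$ with a multiple of $y^{q-1}$ and $yc^*$ with a multiple of $x^{p-1}$, and associativity gives $\mu_2x^p=x(cy)=(xc)y=\mu_1y^q$. Your exponent bookkeeping ($1/q'=1/p+1/r'$, $1/p'=1/q+1/r'$, $(q-1)q'=q$, $\|c\|_{r'}=\|xy\|_r^{r-1}$) checks out, and the one nontrivial external input you flag --- uniform (hence strict) convexity of Haagerup $L^s(\M)$ for $1<s<\infty$, together with the isometric duality --- is a standard theorem of Kosaki/Terp, so the gap you anticipate is not a real one. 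What your route buys: a uniform treatment of all $r\ge1$ with no complex interpolation and no ALT inequality, plus the sharper conclusion that for $x,y\ne0$ the constant $\lambda$ is strictly positive and the first alternative always holds. What the paper's route buys: it stays entirely within tools already used elsewhere in the text (three line theorem, Cauchy--Schwarz, ALT) and does not need the strict convexity of the nonhilbertian $L^s$-spaces.
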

\begin{proof}
    If $y = 0$, then $y = \lambda x$ holds with $\lambda = 0$.
    If $x = 0$, then $x = \mu y$ holds with $\mu = 0$.
    Thus, we may and do assume that $x \ne 0$, $y \ne 0$. 
    \par
    We firstly consider the case of $r = 1$.
    We may and do also assume that $x = h_{\varphi}^{1/p}$, $y = h_{\psi}^{1/q}$
    with $\varphi$, $\psi \in \M_*^+$, $\varphi \ne 0$ and $\psi \ne 0$. 
    Let $xy = u |xy|$ be the polar decompostion.
    Consider $f(z) = \tr (u^* h_{\varphi}^z h_{\psi}^{1-z})$,
    a bounded continuous function over $0 \le \Re z \le 1$, 
    analytic in the interior. 
    We have
    \begin{equation*}
        |f(it)| \le \psi(1), \qquad |f(1 + it)| \le \varphi(1).
    \end{equation*}
    The famous Hadamard three line theorem shows 
    $|f(z)| \le \varphi(1)^{\Re z} \psi(1)^{1 -\Re z}$. 
    \par
    Then, $g(z) = \varphi(1)^{-z} \psi(1)^{-(1 -z)} f(z)$ satisfies
    the same properties $f(z)$ does and
    \begin{equation*}
        |g(z)| \le 1
        \qquad(0 \le \Re z \le 1).
    \end{equation*}
    The assumption here implies
    \begin{equation*}
        g(1/p) = \varphi(1)^{-1/p} \psi(1)^{-1/q} f(1/p)
        = \|x\|_p^{-1} \|y\|_q^{-1} \|xy\|_1 = 1.
    \end{equation*}
    By the boundary value property in complex analysis,
    \begin{equation*}
        1 = |g(1/2)| = \varphi(1)^{-1/2} \psi(1)^{-1/2} 
        |\tr (u^* h_{\varphi}^{1/2} h_{\psi}^{1/2})|.
    \end{equation*}
    Hence
    \begin{equation*}
        \| h_{\varphi}^{1/2}\|_2 \| h_{\psi}^{1/2}\|_2 
        = |\tr (u^* h_{\varphi}^{1/2} h_{\psi}^{1/2})| 
        \le \|u^* h_{\varphi}^{1/2} \|_2 \| h_{\psi}^{1/2} \|_2
        \le \| h_{\varphi}^{1/2} \|_2 \| h_{\psi}^{1/2} \|_2
    \end{equation*}
    and the equality condition of the Cauchy--Schwarz inequality implies
    $u^* h_{\varphi}^{1/2} = \alpha h_{\psi}^{1/2}$ for some $\alpha \in \mathbb{C}^{\times}$.
    Since $|\tr (u^* h_{\varphi}^{1/2} h_{\psi}^{1/2})| 
    = |\tr (h_{\varphi}^{1/2} h_{\psi}^{1/2} u^*)|$,
    we also have $h_{\psi}^{1/2}u^* = \beta h_{\varphi}^{1/2}$ 
    for some $\beta \in \mathbb{C}^{\times}$. 
    \par
    By the construction of the polar decomposition,
    $uu^* \le s(h_{\varphi}^{1/2}) = s(h_{\varphi}) = s(\varphi)$.
    Since
    \begin{equation*}
        h_{|\beta|^2 \varphi} = |\beta|^2 h_{\varphi}
        = uh_{\psi} u^* = h_{u\psi u^*},
    \end{equation*}
    we observe that
    \begin{equation*}
        |\beta|^2\varphi(s(\varphi) -uu^*) = u\psi u^* (s(\varphi) -uu^*) 
        = \psi(u^* s(\varphi)u - u^* u) 
        = \psi (u^*u - u^*u) = 0,
    \end{equation*}
    implying that $s(\varphi) = u u^*$.
    Therefore, 
    \begin{equation*}
        |\alpha|^2 h_{\psi} = (\alpha h_{\psi}^{1/2})^* (\alpha h_{\psi}^{1/2}) 
        = (u^* h_{\varphi}^{1/2})^* (u^*h_{\varphi}^{1/2}) 
        = h_{\varphi}^{1/2} u u^* h_{\varphi}^{1/2} = h_{\varphi},
    \end{equation*}
    and thus 
    \begin{equation*}
        |\alpha|^2 y^q = |\alpha|^2 (h_{\psi}^{1/q})^q
        = |\alpha|^2 h_{\psi} = h_{\varphi}
        = (h_{\varphi}^{1/p})^p = x^p.
    \end{equation*}
    Hence we are done when $r = 1$.
    \par
    We next consider the case of $r > 1$. 
    By the Araki--Lieb--Thirring inequality (see \cite[Theorem 4]{Kosaki92})
    and H\"{o}lder's inequality, we have
    \begin{equation*}
        \|x\|_p \|y\|_q = \|xy\|_r = \| |xy|^r \|_1^{1/r} 
        \le \|x^r y^r\|_1^{1/r}
        \le \|x^r\|_{p/r}^{1/r} \|y^r\|_{q/r}^{1/r} 
        = \|x\|_p \|y\|_q
    \end{equation*}
    and thus
    $\|x^r y^r\|_1 = \|x^r\|_{p/r} \|y^r\|_{q/r}$.
    Hence we also have the desired assertion by the first case.
\end{proof}
Let $a \in L^p(\M)$, $b \in L^q(\M)$ with $1/p +1/q =1$.
Let $b = v|b|$ be the polar decomposition. 
We observe that 
$|ab^*|^2 = v |b| |a|^2 |b| v^* = v ||a||b||^2 v^*$.
Hence, $\| |ab^*|\|_r = \| |a||b|\|_r$.
Therefore, equality in H\"{o}lder's inequality 
$\|ab^* \|_r \le \|a \|_p \|b\|_q$ implies 
$|a|^p = \lambda |b|^q$ or $|b|^q = \lambda |a|^p$ for some $\lambda \ge 0$
when $r \ge 1$.
Consequently, we get:
\begin{Cor}[equality condition for H\"{o}lder's inequality] \label{equality condition for Holder}
    Let $p$, $q \in (1, \infty)$, $r \ge 1$ with $1/r = 1/p + 1/q$,
    $a \in L^p(\M)$ and $b \in L^q(\M)$. 
    The following conditions are equivalent:
    \begin{enumerate}[label=\rm{(\arabic*)}]
        \item $\|ab^*\|_r = \|a\|_p \|b\|_q$.
        \item $|a|^p = \lambda |b|^q$ or $|b|^q = \lambda |a|^p$ for some $\lambda \ge 0$.
    \end{enumerate}
\end{Cor}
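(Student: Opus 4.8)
The plan is to deduce the Corollary from Theorem~\ref{equality Holder thm} by reducing the mixed-phase situation to the positive case via a polar decomposition of $b$, essentially formalizing the computation sketched in the paragraph preceding the statement; the equivalence $(1)\Leftrightarrow(2)$ will then follow with little extra work.

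For $(2)\Rightarrow(1)$ I would first dispose of the degenerate case: if the relevant $\lambda$ is $0$, then one of $a,b$ vanishes and both sides of $(1)$ equal $0$, so assume $\lambda>0$ and, by symmetry, $|a|^p=\lambda|b|^q$. Writing $b=v|b|$ for the polar decomposition (so that $v^*v=s(|b|)$ and $b^*=|b|v^*$), I would compute $|ab^*|^2=v\,|b|\,|a|^2\,|b|\,v^*=\lambda^{2/p}\,v\,|b|^{2(p+q)/p}\,v^*$; since $s\big(|b|^{2(p+q)/p}\big)\le v^*v$, conjugation by $v$ commutes with the continuous functional calculus, giving $|ab^*|^r=\lambda^{r/p}\,v\,|b|^q\,v^*$, using $r(p+q)/p=q$ (which is just $1/r=1/p+1/q$). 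Taking $\tr$ and using $\tr(v\,|b|^q\,v^*)=\tr(|b|^q\,v^*v)=\tr(|b|^q)$ then yields $\|ab^*\|_r=\lambda^{1/p}\tr(|b|^q)^{1/r}$, which is exactly $\|a\|_p\|b\|_q=\lambda^{1/p}\tr(|b|^q)^{1/p+1/q}$.

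For $(1)\Rightarrow(2)$, the substantive direction, I would again use $b=v|b|$ and note $|ab^*|^2=v\,|b|\,|a|^2\,|b|\,v^*=v\,(|a|\,|b|)^*(|a|\,|b|)\,v^*$. Because $v^*v=s(|b|)$ dominates $s\big((|a|\,|b|)^*(|a|\,|b|)\big)=s(|b|\,|a|^2\,|b|)$, conjugation by $v$ is a spatial $*$-isomorphism on the relevant corner and hence preserves the generalized $s$-numbers (Fack--Kosaki, \cite[Lemma 2.5, Lemma 4.8]{FackKosaki86}, in the spirit of Lemma~\ref{lp norm order} and Lemma~\ref{trace equality}); thus $\|ab^*\|_r=\||a|\,|b|\|_r$. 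Combining this with H\"older's inequality $\||a|\,|b|\|_r\le\||a|\|_p\||b|\|_q=\|a\|_p\|b\|_q$, the hypothesis $\|ab^*\|_r=\|a\|_p\|b\|_q$ forces equality in H\"older's inequality for the positive elements $x=|a|\in L^p(\M)_+$ and $y=|b|\in L^q(\M)_+$, and Theorem~\ref{equality Holder thm} then gives $|a|^p=\lambda|b|^q$ or $|b|^q=\lambda|a|^p$ for some $\lambda\ge0$, i.e.\ condition $(2)$.

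Since Theorem~\ref{equality Holder thm} (whose proof runs through the Hadamard three-line theorem and the Araki--Lieb--Thirring inequality of \cite{Kosaki92}) carries all the analytic content, I do not expect a genuine obstacle here. The only point demanding care is the functional-analytic bookkeeping for the partial-isometry conjugation: verifying the support inclusion $s(|b|\,|a|^2\,|b|)\le v^*v$ so that $v(\cdot)v^*$ leaves the $s$-numbers, and hence all relevant $L^r$-norms and traces, unchanged, together with the routine check that the trace identity $\tr(v\,c\,v^*)=\tr(c\,v^*v)$ is legitimate in the Haagerup $L^1$-setting.
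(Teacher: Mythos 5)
Your argument is correct and follows essentially the same route as the paper: the paper likewise passes through the polar decomposition $b=v|b|$, the identity $|ab^*|^2=v\,\bigl||a||b|\bigr|^2v^*$, and the resulting norm equality $\|ab^*\|_r=\||a||b|\|_r$ to reduce to Theorem~\ref{equality Holder thm} for positive elements. The only difference is that you also write out the converse implication $(2)\Rightarrow(1)$ and the support/trace bookkeeping for the conjugation by $v$, which the paper leaves implicit; both checks are correct.
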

%%%%%%%%%%%%%
%%%%%%%%%%%%%%
%%%%参考文献
%%%%%%%%%%%%%
%%%%%%%%%%%
%%%%%%
%biblatexでのreferenceの表示
\printbibliography
\end{document}